\numberwithin{equation}{section}
\newtheorem{conjmain}[mainthm]{Conjecture}
\def\BbK{\mathbb{K}}
\def\G{\mathbb{G}}
\def\CP{\mathbb{CP}}
\def\cL{\mathcal{L}}
\def\hookar{\ar@{^{(}->}}
\def\spec{\mathsf{Spec}}
\def\val{\mathrm{val}}
\def\grmf{\mathsf{GrMF}}
\def\fuk{\EuF}
\def\bc{\mathsf{bc}}
\def\Nef{\mathsf{N}}
\def\eps{\epsilon}
\def\mir0{F}
\newcommand{\fm}{\mathfrak{m}}
\newcommand{\ol}[1]{\overline{#1}}
\def\G{\mathbb{G}}
\def\and{\, \& \,}
\def\nov{r}
\def\snov{s}
\def\lcm{\mathsf{lcm}}
\renewcommand{\vec}[1]{\mathsf{#1}}
\def\cL{\mathcal{L}}
\def\cA{\mathsf{A}}
\def\cB{\mathsf{B}}
\def\cC{\mathcal{C}}
\def\cD{\mathcal{D}}
\def\cR{\mathcal{R}}
\def\NE{\mathrm{NE}}
\def\nufun{nu\text{-}fun}
\def\NuFun{Nu\text{-}Fun}
\def\mfun{pre\text{-}fun}
\def\ob{\mathrm{Ob}\,}
\def\Hom{\mathrm{Hom}}
\def\HH{\mathrm{HH}}
\def\charac{\mathrm{char}\,}
\begin{document}

\title{Integrality of mirror maps and arithmetic homological mirror symmetry for Greene--Plesser mirrors}

\author{Sheel Ganatra, Andrew Hanlon, Jeff Hicks, Daniel Pomerleano, and Nick Sheridan}

\begin{abstract}
We prove the `integrality of Taylor coefficients of mirror maps' conjecture for Greene--Plesser mirror pairs as a natural byproduct of an arithmetic refinement of homological mirror symmetry. 
We also prove homological mirror symmetry for Greene--Plesser mirror pairs in all characteristics such that the B-side family has good reduction, generalizing work of the fifth author and Smith over the complex numbers. A key technical ingredient is a new versality argument which allows us to work throughout over a Novikov-type ring with integer coefficients.
\end{abstract}

\maketitle

\section{Introduction}
\subsection{Integrality of mirror maps}
In the early days of mirror symmetry, it was conjectured that the coefficients of so-called `mirror maps' should be integers; see \cite[Conjecture 6.3.4]{Batyrev1995}, \cite{Lian1996}.
For example, let us consider the case of a smooth degree-$n$ hypersurface in $\mathbb{CP}^{n-1}$, which has one K\"ahler parameter $q$. 
The mirror is a crepant resolution of a quotient of the hypersurface 
$$\left\{\prod_{i=1}^n z_i = \nov \sum_{i=1}^n z_i^n\right\} \subset \mathbb{CP}^{n-1}$$
by a finite group and has one complex parameter $T = \nov^n$. 
The mirror map takes the form $q(T) = T \cdot \phi(T)^n$. Here
\begin{align}
\label{eq:hyp_mm}    \phi(T) &= \exp\left( \frac{\sum_{i \ge 1} F_i H_i T^i}{\sum_{ i \ge 0} F_i T^i}\right),\quad \text{where}\\
\nonumber    F_i &= \frac{(ni)!}{(i!)^n} \quad \text{and}\\
\nonumber    H_i &= \sum_{k=i+1}^{ni} \frac{1}{k}. 
\end{align} 

It was first proved by Lian--Yau that the power series $q(T)$ has integer coefficients when $n$ is prime \cite{Lian1998}; Zudilin proved the same statement in the case that $n$ is a prime power \cite{zudilin2002}; Lian--Yau extended their result to show that $\phi(T)$ has integer coefficients when $n$ is prime \cite{Lian-Yau-nthroot}; and Krattenthaler--Rivoal proved this extended result for general $n$ \cite{Krattenthaler2010}. All of the above results were obtained using methods of $p$-adic analysis. In \cite{Kontsevich2006}, Kontsevich--Schwarz--Vologodsky introduced an algebro-geometric approach to studying these integrality questions and applied them to the case of the quintic 3-fold.  

More generally, one can consider the case of a Batyrev mirror pair of Calabi--Yau hypersurfaces  \cite{Batyrev1993}.
The mirror map is computed and conjectured to have integral coefficients in \cite[Conjecture 6.3.4]{Batyrev1995} (see also \cite[Section 6.3.4]{coxkatz}). 
We now recall the conjecture.

Let $M$ be a lattice, $\Delta \subset M_\R$ a reflexive lattice polytope, and $P \subset \partial \Delta \cap M$ be the set of boundary lattice points which are not contained in the interior of a codimension-$1$ face. 
We consider the map $\Z^{P} \to M$ which sends the $\vec{p}$th generator to $\vec{p}$, let $K$ be its kernel, and
\begin{align*}
    K_{\vec{p}} &:= \{u \in K: u_{\vec{q}} \ge 0 \text{ for $\vec{q} \neq \vec{p}$}\}\\
    K_{\ge 0} & := \{u \in K: u_{\vec{q}} \ge 0 \text{ for all $\vec{q}$}\}.
\end{align*}
Let $K_+ \subset K$ be the submonoid generated by the $K_{\vec{p}}$.
By Lemma \ref{lem:K+inNEA}, the cone generated by $K_+ \subset \R^P$ is strongly convex, so we may define $\Z[[K_+]]$ to be the completed group ring. 
For any $u \in K_+$, we write 
$$\nov^u = \prod_{\vec{p} \in P} \nov_{\vec{p}}^{u_{\vec{p}}}.$$

We introduce the following notation for harmonic sums:
\begin{equation} H(k) := \sum_{i=1}^k \frac{1}{i}\end{equation}
whenever  $k \in \Z_{\ge 1}$, and we define $H(0) = 0$. 
We also define the `combinations' function:
\begin{align*}
\mathsf{comb}: (\Z_{\ge 0})^{P} & \to \Z_{\ge 1},\\
\mathsf{comb}(u) & := \frac{\left(\sum_{\vec{p} \in P} u_{\vec{p}}\right)!}{\prod_{\vec{p} \in P} u_{\vec{p}}!},
\end{align*}
and extend it to 
\begin{align*}
    \mathsf{comb}_{\vec{p}}: K_{\vec{p}} \setminus K_{\ge 0} & \to \Z,\\
    \mathsf{comb}_{\vec{p}}(u) & := (-1)^{u_{\vec{p}}+1}\frac{\left(\sum_{\vec{q} \in P} u_{\vec{q}}\right)! (-u_{\vec{p}}-1)!}{\prod_{\vec{q} \in P \setminus \{\vec{p}\}} u_{\vec{q}}!}.
\end{align*}
(Note that $\sum_{\vec{q} \in P} u_{\vec{q}} \ge 0$ for $u \in K_{\vec{p}}$ by \cite[Lemma 9.2]{BeukersVlasenkoIII}, so its factorial is defined.)

Define
\begin{align*} \tau(\nov) &:= \sum_{u \in K_{\ge 0}} \mathsf{comb}(u) \cdot \nov^u  \in \Z[[K_{\ge 0}]],\\
\tau_{\vec{p}}(\nov) &:= \sum_{u \in K_{\ge 0}} \mathsf{comb}(u) \cdot \left(H\left(\sum_{\vec{q} \in P}u_{\vec{q}}\right) - H(u_{\vec{p}})\right) \cdot \nov^u \in \Q[[K_{\ge 0}]],\\
\gamma_{\vec{p}}(\nov) &:= \sum_{u \in K_{\vec{p}} \setminus K_{\ge 0}} \mathsf{comb}_{\vec{p}}(u) \cdot \nov^u \in \Q[[K_{\vec{p}}]]
\end{align*}
for $\vec{p} \in P$.  
Finally, define
$$\phi_{\vec{p}}(\nov) := \exp\left(\frac{\tau_{\vec{p}}(\nov)+\gamma_{\vec{p}}(\nov)}{\tau(\nov)}\right) \in \Q[[K_+]].$$

\begin{conjmain}[Conjecture 6.3.4 of \cite{Batyrev1995}]\label{conj:intmap}
For any $u \in K$, we have
$$\prod_{\vec{p} \in P} \phi_{\vec{p}}(\nov)^{u_{\vec{p}}} \in \Z[[K_+]].$$
\end{conjmain}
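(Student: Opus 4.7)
The plan is to derive Conjecture \ref{conj:intmap} as a consequence of an arithmetic refinement of homological mirror symmetry (HMS) for Greene--Plesser pairs, in line with the strategy announced in the abstract. The guiding principle is that $\phi_{\vec{p}}(\nov)$ is the transition function between natural coordinates on the A-side K\"ahler moduli and the B-side complex moduli: if both sides of HMS can be organized over the integral Novikov ring $\mathbb{Z}[[K_+]]$, then the transition must itself lie in $\mathbb{Z}[[K_+]]$.

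First, I would construct both sides of mirror symmetry as categorical families over $\mathbb{Z}[[K_+]]$. On the A-side, the relevant object is a Fukaya-type category of the Batyrev hypersurface whose $A_\infty$-structure constants, weighted by monomials $\nov^u$ for $u \in K_+$, lie in $\mathbb{Z}[[K_+]]$. On the B-side, one considers a category of coherent sheaves or matrix factorizations on a family lifting the Greene--Plesser mirror, parameterized by the $\nov_{\vec{p}}$, under the good reduction hypothesis addressed elsewhere in the paper. Next, I would establish an HMS equivalence over $\mathbb{Z}[[K_+]]$ by classifying deformations of the A-side category via a Hochschild cohomology calculation with integer coefficients and recognizing the B-side family as realizing the universal deformation. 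The mirror map then emerges as the comparison between the tautological coordinates on the universal deformation and the geometric parameters on the B-side family.

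To complete the proof, I would identify the series $\tau$, $\tau_{\vec{p}}$, and $\gamma_{\vec{p}}$ with components of the $I$-function of the GKZ hypergeometric system attached to $(M, \Delta, P)$, so that $\exp\!\bigl((\tau_{\vec{p}} + \gamma_{\vec{p}})/\tau\bigr)$ coincides with canonical coordinates (in the sense of Hosono--Lian--Yau) on the B-side moduli space. Once this identification is in place, the conclusion that $\prod_{\vec{p}} \phi_{\vec{p}}(\nov)^{u_{\vec{p}}} \in \mathbb{Z}[[K_+]]$ for all $u \in K$ becomes the statement that the A/B-side parameter change preserves the integral structure. The main obstacle, as advertised, is precisely the integral versality argument: standard HMS proofs rely on formality and Hodge-theoretic splittings available only in characteristic zero, so one must carefully control denominators in obstruction-theoretic calculations in order to classify deformations over a Novikov ring with integer coefficients rather than over $\mathbb{Q}$. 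A subsidiary technical difficulty is handling the crepant resolution of the quotient hypersurface in families and verifying that the combinatorial formulas for $\tau$, $\tau_{\vec{p}}$, $\gamma_{\vec{p}}$ genuinely match the categorical mirror map across the full parameter space $K_+$ and not just in a single limiting direction.
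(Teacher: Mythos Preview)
Your outline broadly parallels the paper's strategy: build both sides of HMS over an integral Novikov-type ring, run an integral versality argument to obtain a mirror map $\Psi^*$ with integer Taylor coefficients, and then identify this with the map $\Phi$ defined by the $\phi_{\vec{p}}$. There are, however, two substantive issues.

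First, a technical point: the relative Fukaya category is not naturally defined over $\Z[[K_+]]$. Disc counts are weighted by intersection numbers with the components of $D$, so the natural coefficient ring is $R_A = \Z[[\NE_A]]$ for a cone $\NE_A \subset \Z^P$ dual to an amb-nice cone of K\"ahler classes; similarly the B-side matrix factorization category lives over $R_B = \Z[[(\Z_{\ge 0})^P]]$. The lattice $K$ only enters at the very end, when one restricts the versality map $\Psi^*: R_B \to R_A$ to the degree-zero subring $\Z[[K_{\ge 0}]]$. Working directly over $\Z[[K_+]]$ would mean throwing away the grading information that makes the versality argument tractable.

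Second, and more seriously, your proposal elides the step that carries all the conditional hypotheses in the paper. The versality argument produces \emph{some} integral map $\Psi^*: \nov_{\vec{p}} \mapsto \nov_{\vec{p}} \cdot \psi_{\vec{p}}$, but a priori there is no reason the $\psi_{\vec{p}}$ should have anything to do with the explicit hypergeometric series $\phi_{\vec{p}}$ appearing in the conjecture. You write that ``the mirror map then emerges as the comparison\ldots'' and that identifying $\phi_{\vec{p}}$ with canonical coordinates suffices, but this only pins down $\Phi$; it says nothing about $\Psi^*$. The paper closes this gap by invoking \cite{Ganatra2015}: under the assumptions listed in Theorem~\ref{thm:int_mm} (on the cyclic open--closed map and its compatibility with variations of Hodge structure), categorical HMS implies Hodge-theoretic mirror symmetry, which in turn forces $\Psi^*|_{\C[[K_{\ge 0}]]}$ to preserve flat coordinates. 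Since $\Phi$ is characterized as the unique such map (Appendix~\ref{sec:comp_mm}), one concludes $\Psi^*|_{\C[[K_{\ge 0}]]} = \Phi|_{\C[[K_{\ge 0}]]}$ and hence integrality. Without this Hodge-theoretic bridge the argument does not go through, and this is precisely why the result is stated conditionally.
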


\begin{rmk}\label{rmk:mirrmap}
    Let us make the connection with mirror maps explicit. 
    By Lemma \ref{lem:K+inNEA}, the cone generated by $(\Z_{\ge 0})^P + K_+$ is strongly convex, so we may define the ring $\C[[(\Z_{\ge 0})^P+K_+]]$. 
    Let us consider the map 
\begin{align}
\label{eq:mm_formula}    \Phi: \C[[(\Z_{\ge 0})^{P}]] & \to \C[[(\Z_{\ge 0})^{P}+K_+]] \quad \text{which sends}\\
\nonumber    \nov_{\vec{p}} &\mapsto \nov_{\vec{p}} \cdot \phi_{\vec{p}}(\nov).
\end{align}
It is evident from the definition that this map sends $\C[[K_{\ge 0}]] \to \C[[K_+]]$. 
We define $\C[[K_+]]$ to be the ring of functions on the `simplified K\"ahler moduli space', and $\C[[K_{\ge 0}]]$ the ring of functions on the `simplified complex moduli space'. The restriction of $\Phi$ to the latter is the mirror map, whose computation is outlined in \cite[Section 6.3.4]{coxkatz} (see Appendix \ref{sec:comp_mm}). 
The conjecture is then equivalent to saying that this map has integer Taylor coefficients, i.e., it sends $\Z[[K_{\ge 0}]] \mapsto \Z[[K_+]]$. 
\end{rmk}

Certain cases of Conjecture \ref{conj:intmap} are covered by the works of Krattenthaler--Rivoal \cite{Krattenthaler2010} and Delaygue \cite{Delaygue2013}, but these works assume that the monoid $K_{\ge 0}$ is isomorphic to $(\Z_{\ge 0})^m$ for some $m$, and that $K_{\vec{p}} \subset K_{\ge 0}$ (so that $\gamma_{\vec{p}} = 0$) for all $p \in P$, both of which are certainly not true in general (see Lemma \ref{lem:mon_sm} below for an example). 
Other cases, in which $\Delta$ is highly symmetric, are covered by the work of Beukers--Vlasenko \cite{BeukersVlasenkoIII}.
To the best of our knowledge, the general case of Conjecture \ref{conj:intmap} is open. 

We prove Conjecture \ref{conj:intmap} in the case of Greene--Plesser mirrors, as a natural byproduct of an arithmetic refinement of homological mirror symmetry, and conditionally on some widely-expected foundational results on pseudoholomorphic curve theory and noncommutative geometry:

\begin{main}\label{thm:int_mm}
    Suppose that $\Delta$ is a simplex, there exists a vector $\lambda$ satisfying the MPCS condition (see Definition \ref{def:mpcs} below for an explanation of this condition; note it is automatic when the rank of $M$ is $\le 4$), the relative Fukaya category satisfies the assumptions of     \cite[Section 4]{Ganatra2015}, and \cite[Conjecture 1.14]{Ganatra2015} holds. Then Conjecture \ref{conj:intmap} holds.
\end{main}

\begin{rmk}
    The assumptions of \cite[Section 4]{Ganatra2015} concern the construction of the relative Fukaya category and its cyclic open--closed map, and their structural properties. The relative Fukaya category has been constructed in our context \cite{perutz2022constructing}; however its cyclic open--closed map has only been constructed and showed to respect pairings under more restrictive hypotheses such as tautological unobstructedness \cite{Ganatra_cyclic_OC}; and it has only been proved to be a morphism of variations of Hodge structures under even more restrictive hypotheses \cite{Hugtenburg2022}.
\end{rmk}

\begin{rmk}
    Kontsevich has informed us of an alternative approach to Conjecture \ref{conj:intmap}, making use of the integrality of the coefficients in scattering diagrams in the Gross--Siebert approach to mirror symmetry for Batyrev mirror pairs \cite{KontsevichSoibelmanAffine,GrossSiebertI,GrossSiebertII,Grosstoricdeg}.
\end{rmk}

\begin{example}\label{eg:hyp_in_pn}
    Let $\Delta \subset \R^{n-1}$ be the simplex with vertices $\vec{e}_i$ (the standard basis vectors) and $-\sum \vec{e}_i$. 
    This corresponds to the smooth degree-$n$ hypersurface in $\mathbb{CP}^{n-1}$ which we considered above. 
    In this case $K_{\ge 0}$ is generated by $(1,1,\ldots,1)$, so $\Z[[K_{\ge 0}]] = \Z[[T]]$ where $T = \nov_1\ldots \nov_n$, $\phi_i(T)$ are all equal to $\phi(T)$, and Conjecture \ref{conj:intmap} says that $\phi(T)^n \in \Z[[T]]$. 
    As $\Delta$ is a simplex, our Theorem \ref{thm:int_mm} gives a new proof of the integrality of $\phi(T)^n$, independent of those in \cite{Lian1998,zudilin2002,Lian-Yau-nthroot,Krattenthaler2010}. 
    We do not prove integrality of $\phi(T)$ in this case, although see Remark \ref{rmk:nthroot}. 
\end{example}

\begin{example}\label{eg:mirr_hyp}
    Our Theorem \ref{thm:int_mm} also proves Conjecture \ref{conj:intmap} in the case that $\Delta \subset \R^{n-1}$ is the dual of the reflexive polytope from Example \ref{eg:hyp_in_pn}, corresponding to the `mirror quartic', `mirror quintic', et cetera. 
    In this case, the rank of $K$ is $\binom{2n-1}{n-1} - n^2$, so $19$ for the mirror quartic, $101$ for the mirror quintic, and so on. 
    When $n \ge 3$, the monoid $K_{\ge 0}$ is not isomorphic to $\Z_{\ge 0}^{\mathrm{rk} K}$ by Lemma \ref{lem:mon_sm} below, and furthermore there exist $\vec{p}$ such that $K_{\vec{p}}$ is not contained in $K_{\ge 0}$, so these cases of Theorem \ref{thm:int_mm} are not covered by the existing literature.
\end{example}

\begin{lem}\label{lem:mon_sm}
    Let $\Delta \subset \R^{n-1}$ be the reflexive simplex from Example \ref{eg:mirr_hyp}, $n \ge 3$.
    Then the monoid $K_{\ge 0}$ is not isomorphic to $(\Z_{\ge 0})^{\mathrm{rk} K}$, and furthermore there exist $\vec{p}$ such that $K_{\vec{p}}$ is not contained in $K_{\ge 0}$.
\end{lem}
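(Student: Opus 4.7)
The plan is to identify $P$ in barycentric coordinates and then address each claim by exhibiting an explicit relation. Writing $\tilde{v}_0 := (-1, \ldots, -1)$ and $\tilde{v}_j := (n-1) e_j - \sum_{k \ne j} e_k$ for $j = 1, \ldots, n-1$, one has $\sum_i \tilde{v}_i = 0$, and the lattice points of $\Delta$ correspond bijectively to $y = (y_0, \ldots, y_{n-1}) \in \Z_{\ge 0}^n$ with $\sum y_i = n$ via $p(y) := (1/n)\sum_i y_i \tilde{v}_i$. Under this bijection, $P$ is exactly the set of $y$ with at least two zero entries, since the interior of the facet $\{y_i = 0\}$ requires $y_i = 0$ and all other $y_j > 0$. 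A vector $u \in \Z^P$ lies in $K$ iff its marginals $Y_i(u) := \sum_y u_y y_i$ are all equal to $|u| := \sum_y u_y$.

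For the second claim, assume $n \ge 4$, so edges of $\Delta$ are codimension $\ge 2$ and their interior lattice points lie in $P$. Taking $\vec{p} := p((n-1, 1, 0, \ldots, 0))$, the identity $(n-1)\tilde{v}_0 + \tilde{v}_1 = n \vec{p}$ in $M$ produces $u := (n-1)[\tilde{v}_0] + [\tilde{v}_1] - n[\vec{p}] \in K$, whose entries are non-negative away from $\vec{p}$ with $u_{\vec{p}} = -n$, so $u \in K_{\vec{p}} \setminus K_{\ge 0}$.

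For the first claim, since $K_{\ge 0}$ is saturated in $K$, its being free of rank $\mathrm{rk}\, K$ is equivalent to its Hilbert basis having exactly $\mathrm{rk}\, K$ elements, so it suffices to exhibit $\mathrm{rk}\, K + 1$ distinct irreducibles. The tightest case is $n = 4$ with $\mathrm{rk}\, K = 19$: writing $m_{ij} := (\tilde{v}_i + \tilde{v}_j)/2$ (a lattice point for $n = 4$) and $p_l^{ij}$ for the edge quarter-points, one obtains 20 irreducibles in five families: $V := \sum_i [\tilde{v}_i]$ (1); $M_{ij,kl} := [m_{ij}] + [m_{kl}]$ for opposite-edge pairs (3); $W_{ij} := [\tilde{v}_i] + [\tilde{v}_j] + 2[m_{kl}]$ per edge (6); $R_{ij} := [p_1^{ij}] + [p_3^{ij}] + 2[m_{kl}]$ per edge (6); and $U_i := [\tilde{v}_i] + \sum_{\{j,k\} \subset \{0,1,2,3\} \setminus \{i\}} [m_{jk}]$ per vertex (4). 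Each family has a distinct support in $P$, and each element is irreducible by solving the small linear system $Y_i(u') = |u'|$ on sub-multisets $u' \le u$, which forces only the trivial and full sub-sums to be solutions.

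The main obstacle is extending the count to $n \ge 5$, where edge midpoints need not be lattice points; the families above must be replaced by analogues such as $X_l^{ij} := [p_l^{ij}] + [p_{n-l}^{ij}] + \sum_{k \ne i, j} [\tilde{v}_k]$, which is irreducible by the same marginal argument, together with irreducibles supported on lattice points interior to triangle- and higher-dimensional faces. Systematic enumeration leveraging the $S_n$-symmetry of the configuration yields enough distinct irreducibles to exceed $\mathrm{rk}\, K$, but the combinatorial bookkeeping is the principal challenge. The case $n = 3$ (where $|P| = 3$ and $\mathrm{rk}\, K = 1$) is degenerate, with $K_{\ge 0} = K_{\vec{p}} \cong \Z_{\ge 0}$ violating both claims, so the intended hypothesis is presumably $n \ge 4$.
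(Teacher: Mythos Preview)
Your argument for the second claim (existence of $\vec{p}$ with $K_{\vec{p}} \not\subset K_{\ge 0}$) is correct and essentially the same idea as the paper's: both exhibit an explicit three-term collinearity among points of $P$ lying on a single edge of $\Delta$ and read off an element of $K_{\vec{p}}$ with a negative entry.

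For the first claim your proposal has a genuine gap for $n \ge 5$. You acknowledge that ``the combinatorial bookkeeping is the principal challenge'' but do not carry it out; and the difficulty is real, since the number of irreducibles you would need to exhibit grows like $\mathrm{rk}\,K = \binom{2n-1}{n-1} - n^2$. Your $n=4$ enumeration looks correct, but it does not scale.

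The paper sidesteps this entirely with a much cheaper argument. Rather than producing $\mathrm{rk}\,K + 1$ irreducibles, it finds just \emph{four} elements of $K_{\ge 0}$ lying on distinct extremal rays and satisfying a linear dependence; this is already impossible in $(\Z_{\ge 0})^m$. Extremality is established by a general observation: whenever $C \subset P$ is a subset whose convex hull is a simplex containing the origin in its interior, the (unique up to scaling) relation $v^C \in K_{\ge 0}$ supported on $C$ lies on an extremal ray of $K_{\ge 0}$. In your $n=4$ notation one can take $a = M_{ij,kl}$, $c = R_{ij}$, $d = R_{kl}$, and $b = [p_1^{ij}]+[p_3^{ij}]+[p_1^{kl}]+[p_3^{kl}]$, and check $2a + b = c + d$. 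So the needed elements are already among (or close to) your families; you just did not spot that four of them with one relation suffice.

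For $n \ge 5$ the paper then slices: the intersection of $\Delta$ with $\{x_4 = \cdots = x_{n-1} = 0\}$ is precisely the three-dimensional version, its lattice points with at least two vanishing barycentric coordinates still lie in the big $P$, and the extremality criterion above applies verbatim in the big $K_{\ge 0}$ because each supporting set $C$ is still a simplex containing the origin. This gives a uniform argument for all $n \ge 4$ with no growing combinatorics.

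Finally, you are right that the lemma as stated fails for $n = 3$ (where $|P| = 3$, $\mathrm{rk}\,K = 1$, and $K_{\ge 0} = K_{\vec{p}} \cong \Z_{\ge 0}$). The paper's proof in fact treats the three-dimensional simplex as the base case, so the intended hypothesis is $n \ge 4$.
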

\begin{proof}
    We start with a general observation. 
    Let $k_{\vec{p}}$ denote the basis vectors of $\Z^P$. 
    Suppose that $C \subset P$ is a subset, whose convex hull is a simplex with vertices $C$, containing the origin in its interior. 
    Then there exists $v^C = \sum_{\vec{p} \in C} v^C_{\vec{p}} k_{\vec{p}} \in K_{\ge 0}$, unique up to positive scaling. 
    We claim that $v^C$ lies on an extremal ray of $K_{\ge 0}$. 
    Indeed, suppose that there existed $w \in K_\R$ such that $v^C + \epsilon w \in \R_{\ge 0}^P$ for $\epsilon \in \R$ sufficiently small. 
    Clearly we must have $w_{\vec{p}} = 0$ for all $\vec{p} \notin C$; so as $w \in K$ we must have $w$ proportional to $v^C$. 

    Now we consider $\Delta = \{x_i \ge -1 \text{ for all $i$, } \sum x_i \le 1\}$. Consider the case $n=3$. Let $E$ and $F$ be opposite edges of $\Delta$, $e_1,e_2,e_3$ the lattice points in the interior of $E$ (with $e_2$ in the centre), $e_4,e_5,e_6$ the lattice points in the interior of $F$ (with $e_5$ in the centre), $e_7$, \ldots, $e_{22}$ the remaining elements of $P$. 
    Let $k_1,\ldots,k_{22}$ be the corresponding basis vectors of $\Z^P$. 
    Note that 
    $$e_1+e_3 = 2e_2 = -2e_5 = -e_4-e_6.$$
    It follows that the following are elements of $K_{\ge 0}$:
    $$a=k_2+k_5;\quad b=k_1+k_3+k_4+k_6;\quad c=k_1+k_3+2k_5;\quad d=2k_2+k_4+k_6.$$
    These all lie on extremal rays of $K_{\ge 0}$ by the above argument.  

    We now observe that $a$, $b$, $c$, and $d$ lie on distinct extremal rays of $K_{\ge 0}$, however they are linearly dependent, as $2a+b=c+d$. 
    This is impossible, if $K_{\ge 0} \cong \Z_{\ge 0}^{19}$. 

    We also note that $e_1+e_3-2e_2 \in K_2 \setminus K_{\ge 0}$, so $K_2$ is not contained in $K_{\ge 0}$.

    For $n>3$, we observe that the intersection of $\Delta$ with the plane $x_4=x_5=\ldots=x_{n-1}=0$ is the 3-dimensional version; so we can carry out the same argument within that slice of $\Delta$. 
\end{proof}

\subsection{Greene--Plesser data}
\label{subsec:tordata}

We recall the Greene--Plesser mirror construction \cite{Greene1990}, using the language of \cite{Batyrev1993}.

Let $M$ be a lattice of rank $n-1$ (by which we mean an abelian group isomorphic to $\Z^{n-1}$), and let us denote $M_\R := M \otimes_\Z \R$. 
Let $\Delta \subset M_\R$ be a reflexive polytope which is a simplex containing the origin as its unique interior lattice point.

Let $\Sigma'$ denote the complete fan in $M_\R$ whose rays point along the vertices of $\Delta$. 
Let $\lambda$ be an element of $(\R_{>0})^P$, where $P \subset \partial \Delta \cap M$ is the set of lattice points on the boundary of $\Delta$ which are not contained in the interior of a codimension-$1$ face. 
Define $\psi_\lambda:M_\R \to \R$ to be the smallest convex piecewise-linear function such that $\psi_\lambda(\vec{p}) \ge -\lambda_{\vec{p}}$ for all $\vec{p} \in P$. 
The decomposition of $M_\R$ into domains of linearity of $\psi_\lambda$ defines a fan $\Sigma_\lambda$.

\begin{defn}[Definitions 1.5 and 1.8 of \cite{sheridan2021homological}]
\label{def:mpcs}
    We say that $\lambda$ satisfies the `MPCP condition' if $\Sigma_\lambda$ is a simplicial refinement of $\Sigma'$. We say that it satisfies the `MPCS condition' if furthermore all cones of $\Sigma_\lambda$ which do not intersect the interior of a top-dimensional cone of $\Sigma'$ are smooth. 
\end{defn}

We note that for any $\Delta$ there exists a $\lambda$ satisfying the MPCP condition, by \cite{OdaPark}; and the MPCP and MPCS conditions are equivalent when $n \le 5$. 
    
The data on which our construction of a Greene--Plesser mirror pair depends are the lattice $M$, the reflexive simplex $\Delta$, and a vector $\lambda$ satisfying the MPCS condition. It will be convenient for the following discussion to explain how this data is equivalent to a choice of toric data as in \cite[Section 1.2]{sheridan2021homological}. We must produce a finite set $I$; positive integers $\{d_i\}_{i \in I}$ satisfying $\sum_i \frac{1}{d_i} = 1$; a sublattice $\ol{M} \subset \Z^I$ containing $\vec{e}_I := \sum_{i \in I} \vec{e}_i$ and all $d_i \vec{e}_i$ (where $\vec{e}_i$ is the $i$th basis vector of $\Z^I$), and such that $d|\langle \vec{q}, \vec{m}\rangle$ for all $\vec{m} \in \ol{M}$ where $d=\lcm(d_i)$ and $q_i = d/d_i$; and a vector $\lambda$ satisfying the MPCS condition. 

Let $N$ be the dual lattice of $M$, and $\nabla \subset N_\R$ the dual of $\Delta$. 
Let $\{\vec{v}_i\}_{i\in I}$ be the vertices of $\Delta$, and $\{\vec{w}_i\}_{i\in I}$ the corresponding vertices of $\nabla$.
Namely, $w_i$ is the unique vertex of $\nabla$ that does not lie on the facet dual to $v_i$.
Then as a consequence of reflexivity, we have
$$\langle \vec{v}_i,\vec{w}_j\rangle +1 = \delta_{ij} d_i$$
for some positive integers $d_i$. 
We set $d = \lcm(d_i)$, and $q_i = d/d_i$. We observe that
$$\left\langle \sum_i q_i \vec{w}_i,\vec{v}_j \right\rangle = q_jd_j - \sum_i q_i = d - \sum_i q_i.$$
As the convex span of the $\vec{v}_j$ is all of $M_\R$, we must have $\sum_i q_i \vec{w}_i = 0$ and $d=\sum_i q_i$. 
It follows that $\sum_i \frac{1}{d_i} = 1$ as required.

We now set $\ol{M} \subset \Z^I$ to be the image of the embedding
\begin{align*}
    \iota: \Z \oplus M & \to \Z^I\\
    \iota(k,m) &:= k \vec{e}_I + \sum_{i \in I} \langle \vec{w}_i,m\rangle \vec{e}_i.
\end{align*}
Note that the image contains $\iota(1,0) = \vec{e}_I$, and $\iota(1,\vec{v}_i) = d_i \vec{e}_i$, as required. 
Furthermore, we have that
\begin{align*}
    \langle \vec{q},\iota(k,m) \rangle &= k\sum_{i \in I} q_i + \sum_{i \in I} \langle \vec{w}_i,m\rangle q_i \\
    &= kd + \left\langle \sum_{i \in I} q_i \vec{w}_i,m\right\rangle \\
    &=kd
\end{align*}
is divisible by $d$. 
We now observe that our $\iota(\{1\} \times \Delta)$ corresponds to $\ol{\Delta}$ from \cite[Section 1.2]{sheridan2021homological}, and our $P$ corresponds to $\Xi_0$. 
Under this correspondence, it is clear that $\lambda$ corresponds to a vector satisfying the MPCS condition.

\subsection{\texorpdfstring{$A$}{A}-side construction}

Let $Y'$ denote the toric variety corresponding to the fan $\Sigma'$. 
Recall that the lattice points $\vec{q} \in \nabla \cap N$ correspond to sections $z^{\vec{q}}$ of a line bundle on $Y'$; we define 
$$X' := \left\{ \sum_{\vec{q} \in \mathrm{Vert}(\nabla)} z^{\vec{q}} = 0\right\} \subset Y'.$$
The refinement $\Sigma_\lambda$ of $\Sigma'$ determines a toric resolution $Y \to Y'$, and we define $X \subset Y$ to be the proper transform of $X'$. 
Define $D \subset X$ to be the intersection of $X$ with the toric boundary divisor $\partial Y$. 
The MPCS condition ensures that $X$ is smooth and $D$ is simple normal-crossings. 
 
We define $\fuk(X,D)$ to be the ambient relative Fukaya category of $(X,D) \subset (Y,\partial Y)$ constructed in \cite{perutz2022constructing}. 
This depends on a choice of a certain $amb$-nice cone $\Nef$ (\cite[Definition 3.39]{Sheridan2017}), but we will omit it from the notation. 
The ambient relative Fukaya category is a $\Z$-graded curved filtered $A_\infty$ category defined over the graded ring $R_A = \Z[[\NE_A]]$ where $\NE_A \subset \Z^{P}$ is dual to the cone $\Nef$.\footnote{Note that $\fuk(X,D)$ was defined over $\C[[\NE_A]]$ in \cite{sheridan2021homological}, but the construction of \cite{perutz2022constructing} works over $\Z$.} 

\begin{lem}\label{lem:K+inNEA}
    We have $K_+ + (\Z_{\ge 0})^P \subset \NE_A$. 
    In particular, $K_+ + (\Z_{\ge 0})^P$ is strongly convex.
\end{lem}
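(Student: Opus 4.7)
The strategy is to prove the containment $K_+ + (\Z_{\ge 0})^P \subset \NE_A$; strong convexity of the monoid then follows automatically, since $\NE_A$ is strongly convex as the dual of the full-dimensional cone $\Nef \subset \R^P$ (which contains $\lambda$ in its interior). Because $\NE_A$ is closed under addition, the containment further reduces to establishing $(\Z_{\ge 0})^P \subset \NE_A$ together with $K_{\vec{p}} \subset \NE_A$ for each $\vec{p} \in P$.

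The first of these is immediate from the $amb$-nice definition \cite[Definition 3.39]{Sheridan2017}, which places $\Nef \subset (\R_{\ge 0})^P$. For the second, the same definition ensures that $\Nef$ descends, under the quotient $\Z^P \twoheadrightarrow \mathrm{Pic}(Y) = \Z^P/N$ (where $N = \mathrm{Hom}(M,\Z)$ embeds in $\Z^P$ via $n \mapsto (\langle \vec{p},n\rangle)_{\vec{p}}$), into the nef cone of $Y$. Combined with the immediate orthogonality $K \perp N$ inside $\Z^P$, the pairing $\langle u,v\rangle$ for $u \in K_{\vec{p}}$ and $v \in \Nef$ is equal to the intersection number of the class $u \in K \cong H_2(Y;\Z)$ with the nef Picard class $[v]$. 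The problem is thereby reduced to showing that each $K_{\vec{p}}$ lies in the Mori cone of the projective toric variety $Y$.

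For this last step, fix $u \in K_{\vec{p}}$ and a torus-invariant nef divisor $D$ on $Y$ with coefficients $a_{\vec{q}}$ along the toric prime divisors, and let $\psi: M_\R \to \R$ be the associated support function: the unique PL function homogeneous of degree one and linear on cones of $\Sigma_\lambda$ with $\psi(\vec{q}) = a_{\vec{q}}$, which is convex by nefness of $D$. If $u_{\vec{p}} \ge 0$, then with $s = \sum_{\vec{q}} u_{\vec{q}}$, convexity of $\psi$ at $\sum_{\vec{q}} (u_{\vec{q}}/s)\vec{q} = 0$ yields $0 = \psi(0) \le \sum_{\vec{q}} (u_{\vec{q}}/s)\psi(\vec{q})$, so $u \cdot D = \sum_{\vec{q}} u_{\vec{q}} \psi(\vec{q}) \ge 0$. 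If $u_{\vec{p}} < 0$, set $s' = \sum_{\vec{q} \ne \vec{p}} u_{\vec{q}} > 0$; applying convexity of $\psi$ at $\sum_{\vec{q} \ne \vec{p}} (u_{\vec{q}}/s')\vec{q} = (-u_{\vec{p}}/s')\vec{p}$ and invoking homogeneity of $\psi$ along the ray through $\vec{p}$ gives $(-u_{\vec{p}}/s')\psi(\vec{p}) \le \sum_{\vec{q} \ne \vec{p}} (u_{\vec{q}}/s')\psi(\vec{q})$, which rearranges to $u \cdot D \ge 0$. The main subtlety is extracting the two geometric consequences of the $amb$-nice condition used above; once these are in hand, the convexity argument is a short application of Jensen's inequality.
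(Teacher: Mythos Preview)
Your reduction to showing $K_{\vec{p}} \subset \NE_A$ for each $\vec{p}$ matches the paper's, but from that point the two arguments diverge. The paper does not route through the nef/Mori cones of $Y$ at all. Instead it uses a different consequence of the $amb$-nice hypothesis: for every $\lambda \in \Nef$ and every $\vec{p}$ (using that $D_{\vec{p}}$ is non-empty), there exists $\lambda^{(\vec{p})} \in \Nef \cap \R^{P \setminus \{\vec{p}\}}$ with $\lambda - \lambda^{(\vec{p})}$ orthogonal to $K$. Then $\lambda \cdot u = \lambda^{(\vec{p})} \cdot u$, and the latter is visibly $\ge 0$ since $\lambda^{(\vec{p})} \in (\R_{\ge 0})^P$ has vanishing $\vec{p}$th entry while $u \in K_{\vec{p}}$ has its only possibly negative entry in the $\vec{p}$th slot. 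That is the whole proof.

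Your approach instead establishes the stronger fact that each $K_{\vec{p}}$ lies in the Mori cone of $Y$, via a clean Jensen-type argument with the support function, and then deduces the inclusion in $\NE_A$ from the assertion that the image of $\Nef$ in $\mathrm{Pic}(Y)_\R$ lies in the nef cone. This is a genuinely different path: it buys a more geometric statement, but it invokes more of the toric dictionary (and one should note that $\mathrm{Pic}(Y) = \Z^P/N$ and $K \cong A_1(Y)$ hold only rationally here, since $Y$ is merely simplicial; harmless for the inequality). The paper's argument is more elementary in that it uses only the combinatorial ``linear equivalence'' clause of the $amb$-nice definition and never mentions $Y$; yours depends on a second clause of that definition (that $\Nef$ maps to nef classes), which you correctly flag as the main point needing extraction.
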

\begin{proof}
    It is clear that $(\Z_{\ge 0})^P \subset \NE_A$; thus it suffices to show that $K_{\vec{p}} \subset \NE_A$ for all $\vec{p}$. 
    Let $u \in K_{\vec{p}}$. 
    As $\Nef$ is $amb$-nice, and the component $D_{\vec{p}}$ is non-empty, for every $\lambda \in \Nef$ there exists $\lambda^{(\vec{p})} \in \Nef \cap \R^{P \setminus \{\vec{p}\}}$ such that $(\lambda - \lambda^{(\vec{p})}) \cdot u = 0$ for all $u \in K$. 
    It is then clear that $\lambda^{(\vec{p})} \cdot u \ge 0$, as the only negative entry of $u$ is the $\vec{p}$th one, while all entries of $\lambda^{(\vec{p})}$ are non-negative and the $\vec{p}$th entry is $0$. 
    Thus $\lambda \cdot u = \lambda^{(\vec{p})} \cdot u \ge 0$ for all $\lambda \in \Nef$, so $u \in \NE_A$. 
    It follows that $K_{\vec{p}} \subset \NE_A$ for all $\vec{p}$, hence $K_+ \subset \NE_A$ as required.
\end{proof}

Let $\Bbbk$ be a field. 
The vector $\lambda$ determines the cohomology class of an ambient relative K\"ahler form $[\omega;\theta] \in H^2(X,X \setminus D;\R)$ and hence an algebra homomorphism
$d(\lambda)^*: R_A \to \Lambda_{\Bbbk,Q}$ sending $\nov_p \mapsto T^{\lambda_p}$ for all $p \in P$, where
\begin{equation} \Lambda_{\Bbbk,Q} := \left\{ \sum_{j=0}^\infty c_j \cdot 
T^{\lambda_j}: c_j \in \Bbbk, \lambda_j \in Q, \lim_{j \to \infty} \lambda_j = +\infty\right\}\end{equation}
is a Novikov field over $\Bbbk$ and $Q \subset \R$ is a subgroup containing all $\lambda_p$. 
We define 
$$\fuk(X,\omega;\Lambda_{\Bbbk,Q}) := \fuk(X,D)_{d(\lambda)}$$ 
to be the fibre of the relative Fukaya category over the $\Lambda_{\Bbbk,Q}$-point $d(\lambda)$. 
We define $\fuk(X,\omega;\Lambda_{\Bbbk,Q})^\bc$ to be the category of bounding cochains in this category. 
It is a $\Z$-graded, $\Lambda_{\Bbbk,Q}$-linear, uncurved $A_\infty$ category.

\begin{rmk}
    One could envision a more general definition of $\fuk(X,\omega;\Lambda_{\Bbbk,Q})$ which includes Lagrangians which are not exact in the complement of $D$ as objects. 
    As long as this definition satisfies the analogues of the assumptions enumerated in \cite[Section 2.5]{sheridan2021homological}, we would expect all of our results to hold for it. 
    Such an enlarged Fukaya category has been defined when $\Q \subset \Bbbk$ and $Q = \R$, see \cite{fukaya2017unobstructed}. 
    For any $\Q \subset \Bbbk$ and $Q \subset \Q$ we can define a version over $\Lambda_{\Bbbk,Q}$ by restricting objects to be rational Lagrangians with holonomy in $Q$, see \cite{fukaya-galois}. 
    It is less clear how to work over arbitrary $\Bbbk$. 
    One notable exception is when $X$ is a K3 surface, see \cite[Section 8c]{Seidel2003} (although the construction is written for $\Bbbk = \C$, it works for arbitrary $\Bbbk$ by inspection). 
    In general, one may speculate that the techniques of \cite{Bai-Xu} could be brought to bear, but in our case the space is Calabi--Yau, so one could hope to give a simpler definition using the techniques of \cite{Hofer-Salamon} to rule out the appearance of sphere bubbles (the only source of non-trivial automorphism groups in our moduli spaces and hence of denominators in our disc counts).
\end{rmk}

\subsection{\texorpdfstring{$B$}{B}-side construction}

In this section, all varieties and stacks are defined over the field $\Lambda_{\Bbbk,Q}$ (which may have finite characteristic and not be algebraically closed). Given a finitely generated abelian group $G$, we let $\underline{Hom}(G,\mathbb{G}_m)$ denote its Cartier dual over $\Lambda_{\Bbbk,Q}$ (here $G$ is regarded as a constant $\Lambda_{\Bbbk,Q}$ group scheme). 
For any $b \in \Lambda_{\Bbbk,Q}^P$, we define 
$$W_b = -z^{\iota(1,0)} + \sum_{\vec{p} \in P} b_{\vec{p}} \cdot z^{\iota(1,\vec{p})} \in \Lambda_{\Bbbk,Q}[z_i]_{i \in I}.$$
Giving $z_i$ degree $q_i$, $W_b$ is a weighted homogeneous polynomial of degree $d$.
Thus, its vanishing locus defines a hypersurface in the weighted projective stack $\mathbb{WP}(\vec{q})$. 
The natural action of $\underline{Hom}(\Z^I,\mathbb{G}_m) \cong \mathbb{G}_m^{I}$ on $\mathbb{A}^I \setminus 0$ descends to an action of $\underline{Hom}(\ker(\vec{q}),\mathbb{G}_m)$ on $\mathbb{WP}(\vec{q})$. 
We consider the subgroup $\Gamma := \underline{Hom}(\ker(\vec{q})/\iota(0 \oplus M),\mathbb{G}_m)$, and define the toric stack $\check{Y}:=[\mathbb{WP}(\vec{q})/\Gamma]$. The action of $\Gamma$ preserves the vanishing locus of $W_b$, and we define $\check{X}_b \subset \check{Y}$ to be the quotient stack. 
We define $D^bCoh(\check{X}_b)$ to be its derived category.

\subsection{Arithmetic homological mirror symmetry}

Homological mirror symmetry was originally envisioned as an equivalence of $\C$-linear categories. 
Lekili--Perutz proposed that it ought to admit a refinement over $\Z$ called `arithmetic HMS', and proved such a result for an elliptic curve \cite{Lekili-Perutz}. 
Other arithmetic HMS results include \cite{Lekili-Polishchuk-2017,Lekili-Polishchuk-2020,Lekili-Polishchuk-2023,Seidel-formal-groups,Cho-Amorim,Smith2022}. 
Progress for higher-dimensional compact Calabi--Yaus was hampered by the fact that in general, the Fukaya category is only defined over a field of characteristic zero \cite{fooo,fukaya2017unobstructed}. 
This impediment was removed in \cite{perutz2022constructing}, where the Fukaya category of a Calabi--Yau variety relative to a divisor was defined over a ring of formal power series with integer coefficients. 
We prove arithmetic HMS for Greene--Plesser mirrors, away from a finite set of characteristics for which the $B$-side variety has bad reduction:

\begin{defn}
    Observe that the fan $\Sigma_\lambda$ induces a decomposition of $\Delta$ into simplices with vertices on $P \cup \{0\}$. 
    We define $\lcm(\lambda)$ to be the least common multiple of the affine volumes of these simplices where the affine volume is normalized so that the minimal possible non-zero volume is $1$.
\end{defn}

\begin{main}\label{thm:main}
    Suppose that $\mathrm{char}(\Bbbk) \nmid \lcm(\lambda)$, and furthermore, the analogues of the assumptions enumerated in \cite[Section 2.5]{sheridan2021homological} are satisfied by $\fuk(X,D)$.  
    Then there exist $\psi_{\vec{p}} \in R_A$ satisfying $\psi_{\vec{p}} = \pm 1 \operatorname{mod} \fm$ (where $\fm \subset R_A$ is the ideal corresponding to the vertex of $\NE_A$), so that if we set $b_{\vec{p}} = d(\lambda)^*(\nov_{\vec{p}} \cdot \psi_{\vec{p}})$ for $\vec{p} \in P$, we have a quasi-equivalence
        $$\mathrm{Perf} \fuk(X,\omega;\Lambda_{\Bbbk,Q})^\bc \simeq D^bCoh(\check{X}_b).$$
\end{main}

    The assumptions of \cite[Section 2.5]{sheridan2021homological} are satisfied in the case that $X$ is a K3 surface, for $\Bbbk$ an arbitrary field and $\Q \subset Q$, see \cite[Remarks 2.6 and 2.7]{sheridan2021homological}. 
    Thus, we may remove this hypothesis from Theorem \ref{thm:main} when $n=4$.
    We can also enlarge the definition of $\fuk(X,\omega;\Lambda_{\Bbbk,Q})$ to include arbitrary rational Lagrangians in $X$ in this case. 

   Theorem \ref{thm:main} was proved when $\Bbbk = \C$ in \cite{sheridan2021homological}, except it was only shown that the formal power series $\psi_{\vec{p}}$ had coefficients in $\C$. On the other hand, [op. cit.] also treated certain `generalized' Greene--Plesser mirror pairs. We expect our methods could be used to prove analogues of Theorems \ref{thm:int_mm} and \ref{thm:main} for these generalized Greene--Plesser mirror pairs.

We prove Theorem \ref{thm:main} before Theorem \ref{thm:int_mm}. 
The proof of Theorem \ref{thm:int_mm} consists in showing that
    \begin{equation}
\label{eq:prodphi_prodpsi}        \prod_{\vec{p} \in P} \phi_{\vec{p}}^{u_{\vec{p}}} = \prod_{\vec{p} \in P} \psi_{\vec{p}}^{u_{\vec{p}}}
    \end{equation}
for all $u \in K$, where $\psi_{\vec{p}}$ are the power series appearing in the statement of Theorem \ref{thm:main}. 
The result follows, as the $\psi_{\vec{p}} \in R_A := \Z[[\NE_A]]$ have integer Taylor coefficients by construction.

\begin{rmk}\label{rmk:nthroot}
We continue our discussion of the case of Calabi--Yau hypersurfaces in projective space, from Example \ref{eg:hyp_in_pn}. 
If we could arrange for all of our constructions to respect the action of $Sym(n)$ permuting the homogeneous coordinates on both sides, then we could conclude that the $\psi_{\vec{p}}$ were all equal, which would imply that moreover $\phi(T) = \psi_{\vec{p}}(T) \in \Z[[T]]$ as proved in \cite{Lian-Yau-nthroot} for $n$ prime and in \cite{Krattenthaler2010} in general. 
However, we do not pursue this idea here.
\end{rmk}

\subsection{Overview of proofs}

Let us give a quick overview of the proofs of our main theorems to illustrate the key concepts. As mentioned above, homological mirror symmetry when $\Bbbk = \C$ was proved for (generalized) Greene--Plesser mirror pairs in \cite{sheridan2021homological}. 
The proof starts by introducing a subcategory $\tilde{\mathbb{A}}' \subset \fuk(X,D)$, defined over $\C[[\NE_A]]$, and a corresponding subcategory $\tilde{\mathbb{B}}$ of a certain category of equivariant graded matrix factorizations, $\grmf_\Gamma(S,W)$, defined over $\C[[(\Z_{\ge 0})^P]]$. 
One identifies $\tilde{\mathbb{A}}'_0 \simeq \tilde{\mathbb{B}}_0$ using \cite{Sheridan2011} and then uses the versality result of \cite{Sheridan2017} to construct a mirror map $\Psi^*:\C[[(\Z_{\ge 0})^P]] \to \C[[\NE_A]]$ and a filtered quasi-isomorphism $\tilde{\mathbb{A}}' \simeq \Psi^*\tilde{\mathbb{B}}$. 

The key new technical ingredients in the present work are the construction of the relative Fukaya category over $\Z[[\NE_A]]$ from \cite{perutz2022constructing} and a new versality argument which works over $\Z[[\NE_A]]$ (see Appendix \ref{sec:vers}). 
The versality result in \cite{Sheridan2017} required a field of characteristic zero to translate the deformation theory of $A_\infty$ algebras into the deformation theory of $L_\infty$ algebras, which is only well-behaved in characteristic zero. 
The versality result given here deals directly with $A_\infty$ structures and works over $\Z$. 
We also comment on a further modification of the versality result made here: the argument in \cite{sheridan2021homological} ruled out certain deformations using the existence of a certain (signed) group action, which would have necessitated inverting the order of the group in the coefficient ring even when working directly with the $A_\infty$ structure. 
The argument given here is different in nature: it rules out these deformations by showing that they are obstructed. 

With these ingredients in place, it remains to run through the argument of \cite{sheridan2021homological} and make the necessary adjustments so that it works over $\Z$. 
The upshot is a filtered quasi-isomorphism $\tilde{\mathbb{A}}' \simeq \Psi^*\tilde{\mathbb{B}}$, where $\Psi^*:R_B \to R_A$ has integer Taylor coefficients by construction. 
 Passing to bounding cochains, one obtains a subcategory $\tilde{\mathbb{A}} \subset \fuk(X,D)^\bc$ with a quasi-isomorphism $\tilde{\mathbb{A}} \simeq \Psi^*\tilde{\mathbb{B}}$. 
One then passes to the fibre over $d(\lambda)$, to get a quasi-isomorphism $\tilde{\mathbb{A}}_{d(\lambda)} \simeq \tilde{\mathbb{B}}_b$, where $b=\Psi(d(\lambda))$. 
One uses a tropical regularity criterion (see Section \ref{sec:tropreg}) to show that $W_b$ has an isolated singularity at the origin (this is where the condition $\mathrm{char}(\Bbbk)\nmid \lcm(\lambda)$ is needed). 
This, together with the fact that we are now working over a field (namely $\Lambda_{\Bbbk,Q}$), allows us to apply Orlov's theorem \cite{Orlov2009} to show that $\grmf_\Gamma(S,W)_b$ embeds in $D^bCoh(\check{X}_b)$, and furthermore, that the image of $\tilde{\mathbb{B}}_b$ split-generates. 
We then apply the automatic split-generation criterion of \cite{Ganatra2016, Sanda2021} to conclude that $\tilde{\mathbb{A}}_{d(\lambda)}$ also split-generates, concluding the proof of Theorem \ref{thm:main}.

Under the assumptions of Theorem \ref{thm:int_mm}, \cite{Ganatra2015} proves that homological mirror symmetry (the version with $\Bbbk = \C$) implies Hodge-theoretic mirror symmetry.
This is used in \cite[Appendix C]{Sheridan2017} to show that the restriction of $\Psi^*$ to $\C[[K_{\ge 0}]]$ is uniquely characterized by the fact that it preserves flat coordinates. 
It is computed in \cite[Section 6.3.4]{coxkatz} that the unique mirror map preserving flat coordinates is the restriction of $\Phi$ to $\C[[K_{\ge 0}]]$, where $\Phi$ is as in \eqref{eq:mm_formula}. 
Thus we have $\Phi|_{\C[[K_{\ge 0}]]} = \Psi^*|_{\C[[K_{\ge 0}]]}$; as $\Psi^*$ has integer Taylor coefficients by construction, this concludes the proof of Theorem \ref{thm:int_mm}. 

\paragraph{Acknowledgments}
We are very grateful to Sophie Bleau for developing computer code to check cases of Conjecture \ref{conj:intmap}.
We are very grateful to Masha Vlasenko for pointing out and correcting a serious mistake we had made in the formula for the mirror map in Conjecture \ref{conj:intmap}, among other helpful correspondence.  
We would also like to thank Mauricio Romo for pointing out \cite{Batyrev1995} and Duco van Straten for helpful correspondence concerning the integrality of mirror maps problem.
S.G. was supported by NSF grant CAREER DMS-2048055 and a Simons Fellowship (award number 1031288).
A.H. was supported by the Simons Foundation (Grant Number
814268 via the Mathematical Sciences Research Institute, MSRI).
J.H. is supported by an EPSRC Postdoctoral Fellowship (project reference: EP/V049097/1).
D.P. received partial funding from NSF grant DMS-2306204. 
N.S. is supported by a Royal Society University Research Fellowship, an ERC Starting Grant (award number 850713 -- HMS), the Simons
Collaboration on Homological Mirror Symmetry (award number 652236), the Leverhulme Prize, and a Simons Investigator award (award number 929034).

\section{Computations in the Fukaya category}

We now set up the versality argument on the symplectic side. 
We consider the immersed Lagrangian sphere $L$ which was constructed in \cite{Sheridan2011}. 
It sits inside the pair of pants $\tilde{X}' \setminus D'$, of which $X \setminus D$ is a cover, as we review in Section \ref{subsec:branchedCover}. 
We compute the endomorphism algebra $\cA_0$ of $L$ in Section \ref{subsec:endomorphism}, its Hochschild cohomology in Section \ref{subsec:hochschild}, and the first-order deformation classes associated to the components of $D$ in Section \ref{subsec:ambient}. These sections closely follow \cite{sheridan2021homological,Sheridan2015,Sheridan2011}, making the necessary adaptations to work over $\Z$ coefficients. 
In Section \ref{subsec:verifyVersality}, we show that the relative Fukaya category satisfies the versality hypothesis of Proposition \ref{prop:vers}, which is different from that used in \cite{sheridan2021homological}.

\subsection{Branched cover}
\label{subsec:branchedCover}
Recall (from \cite[Section 1.3]{sheridan2021homological}) that we have a branched covering of sub-snc pairs $\phi:(X,D) \to (\tilde{X}',\tilde{D}')$ in the sense of \cite[Section 4.9]{Sheridan2017}, where 
\begin{align*}
    \tilde{X}' &= \left\{\sum_{i \in I} z_i = 0\right\} \subset \CP^{|I|-1},
\end{align*} 
and $\tilde{D}' \subset \tilde{X}'$ is the intersection with the toric boundary of the projective space.

We denote the corresponding ambient grading data by 
\begin{align*}
    \mathbb{G} := \mathbb{G}_{amb}(X \setminus D) & \cong \Z \oplus M,\\
    \tilde{\mathbb{G}} := \mathbb{G}_{amb}(\tilde{X}' \setminus \tilde{D}') & \cong \Z \oplus \Z^I/\langle(2(1-|I|),e_{I})\rangle,
\end{align*} 
where $M = \ol{M}/\langle e_{I_j}\rangle$.  
The morphism of ambient grading data induced by $\phi$ is denoted by
\begin{align*}
    \mathbf{p}: \mathbb{G} & \to \tilde{\mathbb{G}},\\
    \mathbf{p}(k,\vec{m}) & = \left(k-2\sum_{i \in I} \langle \vec{w}_i,\vec{m}\rangle,\iota(0,\vec{m})\right).
\end{align*}

\subsection{Endomorphism algebra of the immersed Lagrangian}
\label{subsec:endomorphism}
An exact immersed Lagrangian sphere $L \looparrowright \tilde{X}' \setminus \tilde{D}'$, together with anchoring and Pin structure, was constructed in \cite{Sheridan2011}. 
The endomorphism algebra $\cA_0$ of $L$ was computed up to quasi-isomorphism over a field, but the computation works over $\Z$ as we now explain. 

First, we compute the cohomological endomorphism algebra $H \cA_0$ of $L$ in $H\fuk(\tilde{X}' \setminus \tilde{D}';\Z)$ (an associative $\Z$-algebra). 
It is isomorphic to $\Z[\theta_i]_{i \in I}$, where $\theta_i$ are anti-commuting variables of degree $(-1,\vec{e}_i) \in \tilde{\mathbb{G}}$ by \cite[Theorem 5.12]{Sheridan2011}. 
The computation is written for $\C$ but manifestly works over $\Z$.

\begin{lem}\label{lem:hkr}
    We define a $\tilde{\G} \oplus \Z$-graded Gerstenhaber algebra $\Z[z_i,\theta_i]_{i \in I}$, where $z_i$ are commuting variables of degree $((2,-\vec{e}_i),1)$, and $\theta_i$ are anti-commuting variables of degree $((-1,\vec{e}_i),0)$, and the Lie bracket is the Schouten bracket. 
    Then the HKR map
    \begin{align} \label{eq:HKRoverZ}
        \Xi: CC^*\left(\Z[\theta_i]_{i \in I}\right) &\to \Z[z_i,\theta_i]_{i \in I}\\
        \Xi(\alpha) &= \sum_{i=0}^\infty \alpha^i(\mathbf{z},\ldots,\mathbf{z}), \nonumber
    \end{align}
    where $\mathbf{z} = \sum_{i \in I}z_i \theta_i$, induces a $\tilde{\mathbb{G}} \oplus \Z$-graded isomorphism of Gerstenhaber algebras. 
\end{lem}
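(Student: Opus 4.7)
The plan is to establish this as an integral Hochschild--Kostant--Rosenberg theorem for the exterior algebra $\Lambda := \Z[\theta_i]_{i \in I}$. Since $\Lambda$ is Koszul and free of finite rank as a $\Z$-module, the classical HKR computation for exterior algebras can be carried out integrally, producing the polyvector fields $\Z[z_i, \theta_i]$ as $HH^*(\Lambda)$. The displayed tri-grading on the codomain is forced by combining the internal $\tilde{\G}$-grading on $\Lambda$ with the Hochschild degree, so once a module-level HKR isomorphism is in place the grading matches on the nose.

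First I would build an explicit bimodule resolution $K_\bullet \twoheadrightarrow \Lambda$ of the form $K_n = \Lambda \otimes_\Z F_n \otimes_\Z \Lambda$, where $F_n$ is the free $\Z$-module spanned by the degree-$n$ piece of the Koszul dual to $\Lambda$ (a divided-power or symmetric monomial basis in the $z_i$, depending on convention). Exactness follows from the usual contracting-homotopy argument on an internal filtration; because $\Lambda$ and every $F_n$ are $\Z$-free, the argument works verbatim over $\Z$ without torsion obstructions. Applying $\Hom_{\Lambda^e}(-,\Lambda)$ produces a complex whose differential vanishes (a standard consequence of Koszulity with graded-commutative dual) and whose underlying module is canonically identified with $\Z[z_i, \theta_i]$ with the stated grading.

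The next step is to identify the map $\Xi$ of \eqref{eq:HKRoverZ} with the cochain-level restriction of Hochschild cochains along the quasi-isomorphism $K_\bullet \hookrightarrow B_\bullet(\Lambda)$ from the Koszul resolution into the bar complex. Under this identification, $\mathbf{z} = \sum_i z_i \theta_i$ represents the tautological Koszul class in bar degree one, and the formula $\Xi(\alpha) = \sum_i \alpha^i(\mathbf{z}, \ldots, \mathbf{z})$ is precisely the evaluation of $\alpha$ against this class at each arity. Algebra compatibility (cup product versus the graded-commutative product on $\Z[z_i, \theta_i]$) then follows from the standard multiplicativity of the bar-Koszul quasi-isomorphism, which is a $\Z$-linear assertion that holds integrally.

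The step I expect to require the most care is verifying that $\Xi$ intertwines the Gerstenhaber bracket on $CC^*(\Lambda)$ with the Schouten bracket on $\Z[z_i, \theta_i]$. Both brackets are graded biderivations with respect to the commutative product established in the previous step, so it suffices to check the identification on generators: $[\theta_i,\theta_j] = 0$ and $[z_i, z_j] = 0$ are automatic (the latter by degree considerations), and the nontrivial pairing $[z_i, \theta_j] = \delta_{ij}$ reduces to a direct computation of the Gerstenhaber bracket of the representing bar cochains. The biderivation property then propagates the match to all of $\Z[z_i, \theta_i]$. The only novelty compared to the field-coefficient computations appearing in \cite{Sheridan2011, sheridan2021homological} is the observation that each ingredient (Koszul dual, bar-Koszul quasi-isomorphism, and bracket calculation on generators) is $\Z$-linear and preserves $\Z$-freeness, so no primes need to be inverted.
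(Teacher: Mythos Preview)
Your proposal is correct and takes a genuinely different route from the paper. The paper argues indirectly: it cites the known Gerstenhaber-algebra isomorphism $\Xi_\C$ over $\C$, proves that $HH^*(\Z[\theta_i]\,|\,\Z)$ is torsion-free by a K\"unneth reduction to the single-variable case (where the normalized Hochschild differential vanishes identically), deduces injectivity of $\Xi$ by base-change to $\C$, infers the Gerstenhaber compatibility of $\Xi$ from injectivity together with that of $\Xi_\C$, and finally gets surjectivity by noting that the length-$\le 1$ cochains already hit the algebra generators $\theta_i,z_i$. Your approach instead builds the Koszul bimodule resolution directly, identifies $\Xi$ with restriction along the Koszul-to-bar comparison map, reads off the algebra structure from multiplicativity of that comparison, and verifies the bracket on generators using the biderivation property of both brackets. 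Your route is more self-contained and yields an explicit cochain model; the paper's is shorter, uses the characteristic-zero result as a black box, and sidesteps the resolution bookkeeping entirely.

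One place where you should commit rather than hedge: over $\Z$ the correct choice is divided powers in the resolution, $K_n = \Lambda \otimes \Gamma^n(V) \otimes \Lambda$. The comparison $K_\bullet \hookrightarrow B_\bullet(\Lambda)$ is then the shuffle map (integer coefficients, no denominators), and dualizing $\Gamma^n(V)$ gives exactly the symmetric power, so $\Hom_{\Lambda^e}(K_n,\Lambda)\cong \mathrm{Sym}^n(V^*)\otimes \Lambda$ and the total is $\Z[z_i,\theta_i]$. With this choice your identification of $\Xi$ with the restriction map holds on the nose: expanding $\alpha^n(\mathbf z,\ldots,\mathbf z)$ and collecting the coefficient of a symmetric monomial in the $z_i$ produces precisely the shuffle sum that the comparison map feeds into $\alpha^n$. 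Using $\mathrm{Sym}^n(V)$ in the resolution would force divided powers on the cochain side and break the identification with the stated formula for $\Xi$. Relatedly, the vanishing of the differential on $\Hom_{\Lambda^e}(K_\bullet,\Lambda)$ is really a consequence of $\Lambda$ itself being graded-commutative (the induced differential is a sum of graded commutators $[\theta_i,-]$ in $\Lambda$), rather than of properties of the Koszul dual.
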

\begin{proof}
It is standard that the analogous map $\Xi_\mathbb{C}$ defined over $\mathbb{C}$ (or any field of characteristic zero) is an isomorphism of Gerstenhaber algebras, see \cite[Section 4b]{Seidel2003} and \cite[Proposition 5.4.6]{Loday:cyclic}. We now explain why this also holds over $\mathbb{Z}.$ 

\emph{Injectivity:} We first show that  \eqref{eq:HKRoverZ} is injective.  Let $S = \Z[\theta_i]_{i \in I}$ be the exterior algebra over $\Z$, and $S^\C = S \otimes_\Z \C$.  Because $S$ is finitely generated and free as a $\mathbb{Z}$-module, we have a base change isomorphism: \begin{align*} HH^*(S^{\mathbb{C}}|\mathbb{C}) \cong HH^*(S|\mathbb{Z})\otimes_\mathbb{Z} \mathbb{C}. \end{align*}

Moreover,  the map $\Xi_\mathbb{C}$ is obtained from $\Xi$ by base-change.  Therefore, since we know $\Xi_\mathbb{C}$ is an isomorphism,  it suffices to prove that $HH^*( S|\mathbb{Z})$ is torsion-free.   If $ S= \mathbb{Z}[\theta]$ is an exterior algebra in one variable $\theta$,  this follows immediately from the fact that the Hochschild differential on the normalized Hochschild complex vanishes identically.  In general,  \begin{align*}  S \cong \bigotimes_i \mathbb{Z}[\theta_i]  \end{align*} as graded algebras.  Then,  the tensor product of normalized bar complexes for $\mathbb{Z}[\theta_i]$ defines a resolution of the diagonal bimodule over $ S\otimes_\mathbb{Z}  S^{op}$.  This implies that $HH^*( S|\mathbb{Z})$ satisfies the K\"unneth formula: \begin{align*} HH^m( S|\mathbb{Z}) \cong \bigoplus_{m_1,\cdots,m_n, \sum m_i=m}  \bigotimes_i HH^{m_i}(\mathbb{Z}[\theta_i]|\mathbb{Z}),  \end{align*}  implying the claim in general.     \vskip  5 pt

\emph{Surjectivity:} Because $\Xi$ is injective and $\Xi_\mathbb{C}$ is a map of Gerstenhaber algebras, $\Xi$ is  a map of Gerstenhaber algebras.  By considering Hochschild cochains of length $\leq 1$,  it is immediate that $\theta_i$ and $z_i$ are in the image of $\Xi$ for all $i \in I$. Because these generate $\mathbb{Z}[z_i,\theta_i]_{i \in I}$ as an algebra,  the map  \eqref{eq:HKRoverZ} is also surjective. 
\end{proof}

\begin{cor}\label{cor:hhdef}
    Define $W_0 = -z^{\vec{e}_I} \in \Z[z_i]_{i \in I} = H\cA_0$. 
    Then we have 
    $$\HH^2( H\cA_0)^s = \begin{cases}
                            0 & \text{if $s > 2$, $s \neq |I|$}\\
                            \Z \cdot W_0 & \text{if $s=|I|$.}
    \end{cases}     $$
\end{cor}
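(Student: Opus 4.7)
The plan is to use the HKR isomorphism of Lemma \ref{lem:hkr} to reduce the computation to an elementary monomial count in $\Z[z_i, \theta_i]_{i \in I}$. I parametrize each monomial by a pair $(a, b)$ with $a \in (\Z_{\geq 0})^I$ (exponents of the commuting $z_i$) and $b \in \{0, 1\}^I$ (exponents of the anti-commuting $\theta_i$). Writing $|a| := \sum_i a_i$ and $|b| := \sum_i b_i$, the monomial $z^a \theta^b$ has $\tilde{\G}$-degree $(2|a|-|b|, \, b - a)$ and auxiliary $\Z$-degree $|a|$.

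The condition that $z^a \theta^b$ lies in $\HH^2(H\cA_0)^s$ amounts, using the relation $(2(1-|I|), \vec{e}_I) = 0$ in $\tilde{\G}$, to the existence of an integer $n$ satisfying
\begin{align*}
    b_i - a_i &= n \text{ for all } i \in I,\\
    2|a| - |b| - 2 &= 2n(1-|I|),
\end{align*}
together with $|a| = s$. Summing the first identity over $i$ gives $|b| - |a| = n|I|$, which when substituted into the second collapses the system to the clean relation $|a| = 2 + n(2 - |I|)$.

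The remaining step is a short case analysis on $n$. The bounds $0 \leq b_i \leq 1$ and $a_i \geq 0$ force $n \in \{-1, 0, 1\}$, with $n = 1$ occurring only if $|I| = 4$. In that last case one finds $a = 0$, $b = \vec{e}_I$, and $s = 0$; for $n = 0$ one has $a = b$ with exactly two nonzero entries, so $s = 2$; and for $n = -1$ one is forced to $a_i = 1$ for all $i$ and $b = 0$, yielding $z^a\theta^b = z^{\vec{e}_I} = -W_0$ at $s = |I|$. This exhausts all monomials contributing to $\HH^2$ and therefore simultaneously verifies both parts of the statement. No serious obstacle is anticipated, as the argument is a finite Diophantine check once Lemma \ref{lem:hkr} is in hand.
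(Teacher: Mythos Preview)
Your argument is correct and follows the same route as the paper: both invoke the HKR isomorphism of Lemma~\ref{lem:hkr} and then reduce to a monomial degree count in $\Z[z_i,\theta_i]_{i\in I}$. The paper simply cites \cite[Lemma~2.96]{Sheridan2015} for that count (noting that Lemma~\ref{lem:hkr} makes it go through over $\Z$), whereas you have written out the Diophantine check explicitly.
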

\begin{proof}
    This is \cite[Lemma 2.96]{Sheridan2015}, which is written over $\C$, but given Lemma \ref{lem:hkr}, manifestly works over $\Z$.
\end{proof}

The $A_\infty$ structure $\mu_0$ on $\cA_0$ satisfies $\mu_0^s = 0$ unless $s$ is congruent to $2$ mod $|I|-2$ by \cite[Lemma 5.9]{Sheridan2011}. 
It follows that $\mu_0^{|I|}$ is closed under the Hochschild differential. 
It is shown in \cite[Proposition 5.13]{Sheridan2011} that $\Xi(\mu_0^{|I|}) = \pm W_0$. 
By Corollary \ref{cor:hhdef} and \cite[Lemma 3.2]{Seidel2003}, the class $\Xi(\mu_0^{|I|})$ determines the $A_\infty$ structure up to formal diffeomorphism; once again the proof of the latter lemma goes through over $\Z$.

\subsection{Hochschild cohomology of the \texorpdfstring{$A_\infty$}{A-infinity} algebra}

\label{subsec:hochschild}

We compute the Hochschild cohomology of $\cA_0$.  We consider the length filtration on $CC^*(\cA_0)$. 
If $\phi_i$ are Hochschild cochains of length $\ell_i$ for $i=1,2$, then their Gerstenhaber bracket has length $\ell_1 + \ell_2 -1$. 
It follows that the spectral sequence $(E_r^{y,s},d_r^{y,s})$ induced by the length filtration on $CC^*(\cA_0)$ is a spectral sequence of Lie algebras. 

We have $E_2^{y,s} = \HH^y( H\cA_0)^s$. 
The fact that $\mu^s = 0$ for $3 \le s \le |I|-1$ implies that $E_{|I|-1}^{ys} = E_2^{ys}$. 
The differential $d_{|I|-1}$ is equal to bracketing with the element $[\mu_0^{|I|}]$, which corresponds to Schouten bracket with $\pm W_0$ under the HKR isomorphism. 
The Schouten bracket with $W_0$ defines a differential on $\Z[z_i,\theta_i]_{i \in I}$ giving the Koszul complex $K(dW_0)$. 
Thus, we have $E_{|I|}^{y,s} = H(K(dW_0))^{y,s}$.
The cohomology of this Koszul complex was computed in \cite[Lemma 3.5]{sheridan2021homological} (the proof was written over $\C$, but works unchanged over $\Z$):

\begin{lem}
Let $u_i = z_i \theta_i$, and let $\mathcal{J} \subset \Z[z_i,\theta_i]_{i \in I}$ be the subalgebra generated by the elements $z_i$ and $u_i - u_j$.  
Let $\mathcal{I} \subset \mathcal{J}$ be the ideal generated by the elements $z^K \cdot \wedge^{top}(U_K)$ for all subsets $K \subset I$, where $z^K$ means the product of $z_i$ over all $i \in K$, and $U_K$ is the subspace spanned by $u_i - u_j$ for $i,j \notin K$. 
Then we have $H(K(dW_0)) \cong \mathcal{J}/\mathcal{I}$.
\end{lem}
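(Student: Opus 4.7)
The plan is to verify the inclusions $\mathcal{J} \subseteq \ker d$ and $\mathcal{I} \subseteq \operatorname{im} d$, producing a well-defined map $\mathcal{J}/\mathcal{I} \to H(K(dW_0))$, and then upgrade this to an isomorphism via a multigrading argument. Since $d$ is a graded derivation of $\Z[z_i,\theta_i]$, checking $\mathcal{J} \subseteq \ker d$ reduces to the algebra generators: $d(z_i) = 0$ is immediate, and
\begin{equation*}
d(u_i - u_j) = z_i \, d(\theta_i) - z_j \, d(\theta_j) = -z_i z_{\hat i} + z_j z_{\hat j} = 0,
\end{equation*}
since $z_i z_{\hat i} = \prod_{k \in I} z_k = z_j z_{\hat j}$.

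For $\mathcal{I} \subseteq \operatorname{im} d$, fix $K \subsetneq I$ and set $J := I \setminus K$. Applying the graded Leibniz rule to $\theta_J$ and factoring $z_{\hat i} = z^K \cdot z_{J \setminus i}$ for $i \in J$ yields
\begin{equation*}
d(\theta_J) = \sum_{i \in J} \pm z_{\hat i}\, \theta_{J\setminus i} = \pm\, z^K \cdot \wedge^{top}(U_K),
\end{equation*}
after recognizing the inner alternating sum as a generator (up to sign) of $\wedge^{top}(U_K)$, the top exterior power of the hyperplane $U_K \subset \operatorname{span}(u_i)_{i \in J}$. For $K = I$ we use $z^I = -d(u_i)$ for any $i \in I$. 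Since $\mathcal{J}$-multiples of these coboundaries remain coboundaries (via $d\alpha = 0 \Rightarrow d(\alpha\beta) = \pm \alpha \, d\beta$), we conclude $\mathcal{I} \subseteq \operatorname{im} d$.

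To upgrade the induced map to an isomorphism, endow $\Z[z_i,\theta_i]$ with the $\Z^I$-multigrading $\deg(z_i) = \vec{e}_i$, $\deg(\theta_i) = \vec{e}_I - \vec{e}_i$; then $d$ is homogeneous of multidegree $\vec{0}$, and both sides of the map decompose compatibly. In each multidegree $\vec{n} \in \Z_{\ge 0}^I$, the complex $C^{\vec{n}}$ is a finite free $\Z$-module with basis indexed by subsets $J \subseteq I$ satisfying $n_i - |J| + [i \in J] \ge 0$ for all $i$; these form a simplicial complex $\Sigma_{\vec{n}}$ on $I$ consisting of the $m$-skeleton of $\Delta^{|I|-1}$ (where $m := \min_i n_i$) together with the $(m+1)$-cells containing $J_0 := \{i : n_i = m\}$. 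The differential in multidegree $\vec{n}$ matches, up to an overall sign, the augmented simplicial boundary of $\Sigma_{\vec{n}}$, so $H(C^{\vec{n}})$ computes the reduced simplicial homology $\tilde H_*(\Sigma_{\vec{n}})$, which is integrally torsion-free for this class of complexes.

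The main obstacle is the parallel combinatorial identification of the multidegree-$\vec{n}$ piece of $\mathcal{J}/\mathcal{I}$ with $\tilde H_*(\Sigma_{\vec{n}})$. Elements of $\mathcal{J}$ in multidegree $\vec{n}$ admit an explicit description as products of $z_i$'s and $(u_p - u_q)$'s, and the ideal $\mathcal{I}$ kills precisely the contributions corresponding to the ``filled-in'' top-dimensional cells of $\Sigma_{\vec{n}}$. Carrying out this matching demands careful sign and index bookkeeping, but all the relevant data (the generators of $\mathcal{I}$, the primitives $\theta_J$, the simplicial complex $\Sigma_{\vec{n}}$) are integrally defined with no denominators, so the argument proceeds over $\Z$ without modification from the characteristic-zero treatment in \cite{sheridan2021homological}.
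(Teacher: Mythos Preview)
Your proposal is correct and aligns with the paper's approach, which is simply to cite \cite[Lemma 3.5]{sheridan2021homological} and observe that the argument there, written over $\C$, goes through unchanged over $\Z$. You supply more detail than the paper does --- explicitly checking $\mathcal{J} \subset \ker d$ and exhibiting primitives for the generators of $\mathcal{I}$, and sketching the multidegree decomposition --- but you likewise defer the core combinatorial identification to the same reference. One minor point: your description of $\Sigma_{\vec n}$ is off by one in the dimension indexing (the allowed faces are the subsets of cardinality $\le m$ together with the cardinality-$(m{+}1)$ subsets containing $J_0$, i.e., the $(m{-}1)$-skeleton plus certain $m$-simplices in the usual convention), but this does not affect the argument since you hand off the details to \cite{sheridan2021homological} in any case.
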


\begin{lem}\label{lem:agradhh}
The associated graded of the length filtration on $\HH^y(\cA_0)$ is given by
$$\mathrm{Gr}_s \HH^y(\cA_0) \cong (\mathcal{J}/\mathcal{I})^{y,s}.$$
\end{lem}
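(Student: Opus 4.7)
The plan is to show that the length-filtration spectral sequence on $CC^*(\cA_0)$ degenerates at the $E_{|I|}$-page; combined with its standard convergence to $\mathrm{Gr}\,\HH^*(\cA_0)$, this implies the lemma, since we have already identified $E_{|I|}^{y,s} \cong (\mathcal{J}/\mathcal{I})^{y,s}$ in the discussion preceding the statement.

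To prove degeneration, we must show $d_r = 0$ on $E_r$ for all $r \ge |I|$. Since $\mu_0^s$ vanishes unless $s = 2$ or $s \equiv 2 \pmod{|I|-2}$, the only potential sources of nontrivial higher differentials are the operations $\mu_0^{2|I|-2}, \mu_0^{3|I|-4}, \ldots$. By Corollary \ref{cor:hhdef}, $\HH^2(H\cA_0)^s = 0$ over $\Z$ for each such $s$ (as $s > 2$ and $s \neq |I|$), so each $\mu_0^s$ is an integral Hochschild coboundary. We then apply integral $A_\infty$ gauge transformations inductively to kill these operations one at a time, producing a quasi-isomorphic $A_\infty$ structure on $\cA_0$ with $\mu^s = 0$ for all $s > |I|$. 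Each such gauge transformation has identity leading term and induces a filtered isomorphism of Hochschild complexes; the corresponding spectral sequence for the gauged $A_\infty$ structure visibly degenerates at $E_{|I|}$ for lack of any further operations, and the isomorphism of spectral sequences transfers the degeneration back to $\cA_0$.

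The main technical point is verifying that the classical $A_\infty$ gauge-theory argument used in \cite[Section 3]{sheridan2021homological} goes through integrally rather than over a field of characteristic zero. The essential inputs are the integral statements of Lemma \ref{lem:hkr} and Corollary \ref{cor:hhdef}, which guarantee that at each inductive stage the obstruction class vanishes in $\HH^2(H\cA_0;\Z)$ and the required primitive $h$ exists already with $\Z$-coefficients, so no primes need to be inverted. Given those integrality statements, the rest of the argument transcribes directly from the characteristic-zero treatment.
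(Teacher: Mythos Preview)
Your argument has a genuine gap at the step where you claim that the spectral sequence for the gauged structure ``visibly degenerates at $E_{|I|}$ for lack of any further operations.'' Even after reducing to an $A_\infty$ structure with only $\mu^2$ and $\mu^{|I|}$ nonzero, the Hochschild differential $D = [\mu^2,-] + [\mu^{|I|},-]$ has two filtration-homogeneous pieces (raising length by $1$ and by $|I|-1$), and the length spectral sequence of such a complex behaves like that of a bicomplex: higher differentials can and generally do arise from zig-zags. Concretely, for $[x_s] \in E_{|I|}^{*,s}$ one chooses $x_{s+|I|-2}$ with $[\mu^2,x_{s+|I|-2}] = -[\mu^{|I|},x_s]$; then $[\mu^{|I|},x_{s+|I|-2}]$ lives in length $s+2|I|-3$ and represents a potentially nonzero $d_{2|I|-3}[x_s]$, and further zig-zags produce candidates for $d_{3|I|-5}$, and so on. The absence of higher $\mu^s$ does nothing to rule these out, so the gauging step---even granting it works integrally, which itself needs more care than ``each $\mu_0^s$ is a coboundary,'' since those cochains are not Hochschild cocycles once $s \ge 2|I|-2$---leaves the essential problem untouched.

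The paper's proof is different in kind: it never gauges, but instead shows directly that the $E_{|I|}$-page $\mathcal{J}/\mathcal{I}$ leaves no room for any $d_r$ with $r \ge |I|$, by a bidegree computation. One checks that if $z^{a_1}\theta^{K_1}$ and $z^{a_2}\theta^{K_2}$ represent nonzero classes in bidegrees $(y,s_1)$ and $(y+1,s_2)$ with $s_2 \ge s_1+|I|$, then unwinding the $\tilde{\mathbb{G}}$-grading forces every entry of $a_2$ to be at least $1$, so $z^{a_2}\theta^{K_2}$ lies in the ideal $\mathcal{I}$, a contradiction. This combinatorial argument is what actually forces degeneration, and it applies regardless of whether the higher $\mu^s$ have been gauged away.
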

\begin{proof}
By \cite[Theorem 5.5.10]{Weibel-HA} and the fact that the length filtration is complete and bounded above by definition, it suffices to prove that the spectral sequence degenerates at $E_{|I|} \cong \mathcal{J}/\mathcal{I}$.
We claim that $d_r$ vanishes for all $r \ge |I|$ for grading reasons.  

Indeed, let us suppose that there exist $(y_1,s_1), (y_2,s_2) \in Y \oplus \Z$ such that $H(K(dW_0))^{y_i,s_i} \neq 0$ for both $i$, $y_2 = y_1+1$, and $s_2 \ge s_1 + |I|$. 
Then there exist elements $z^{a_i}\theta^{K_i} \in H(K(dW_0))^{y_i,s_i}$. 
This means $(y_i,s_i) = ((2|a_i| - |K_i|,-a_i + \vec{e}_{K_i}),|a_i|)$ (where $|a_i| = a_i \cdot \vec{e}_I$). 
The fact that $y_2 = y_1+1$ implies
$$2|a_2| - |K_2| = 2|a_1| - |K_1| +1 - 2q(1-|I|), \qquad -a_2 + \vec{e}_{K_2} = -a_1+\vec{e}_{K_1} - q \vec{e}_I$$
for some $q \in \Z$. 
The second of these implies
$$-|a_2| + |K_2| = -|a_1| + |K_1| + q|I|;$$
adding this to the first gives
$$|a_2| = |a_1| + 1+q(|I|-2).$$
As $|a_2| \ge |a_1| + |I|$ and $|I| \ge 3$, we must have $q \ge 2$. 
But then 
$$a_2 = a_1 + \vec{e}_{K_2} - \vec{e}_{K_1} + q\vec{e}_I$$
has all entries $\ge 1$, which implies that it is in the ideal generated by $W_0$; thus it is contained in $\mathcal{I}$, so $H(K(dW_0))^{y_2,s_2} = 0$.
\end{proof}

\subsection{Ambient relative Fukaya category}
\label{subsec:ambient}

Let $\hat{\mathbb{A}} \subset \fuk(X,D;\Z)$ denote the subcategory whose objects are lifts of $L$ under the branched covering $\phi$, together with all their shifts. 
We may choose our perturbation data equivariantly for the action of the shifts and covering group, because the former acts trivially and the latter acts freely on the underlying geometric Lagrangians. 
Then we have $\hat{\mathbb{A}} = \mathbf{p}^* \mathbb{A}$ for some $\mathbf{p}_* R_A$-linear $\tilde{\mathbb{G}}$-graded $A_\infty$ category $\mathbb{A}$. 
We denote by $\cA$ the endomorphism algebra of the object $L$ of $\mathbb{A}$. 
Note that all objects of $\mathbb{A}$ are shifts of $L$, so $\cA$ completely determines $\mathbb{A}$ (c.f. \cite[Appendices A and B]{Sheridan2017}). 
We have $\cA \otimes_{R_A} R_A/\fm_A = \cA_0$.

The first-order deformation class of $\cA$ associated to $\vec{p} \in P$ is $\pm z^{\vec{p}}$ by the argument from \cite[Section 3.4]{sheridan2021homological} (which works over $\Z$).

\subsection{Verifying the versality hypotheses}

\label{subsec:verifyVersality}
Let $U  \subset  \Z\langle u_i\rangle_{i \in I}$ be the kernel of $\vec{e}_{I}\cdot(-)$. 
Let 
\begin{align*}
    \mathcal{H}^* &:= \wedge^*(U)/\left(\wedge^{top}(U)\right).
\end{align*}
Note that $\mathcal{H}^i \subset \mathrm{Gr}_i\HH^i(\cA_0)$ can naturally be regarded as a graded subalgebra by Lemma \ref{lem:agradhh}. 

\begin{lem}\label{lem:hh2}
For any $\vec{p} \in P$, let $y_{\vec{p}} = \mathbf{p}(0,\vec{p})$. Then we have:
\begin{itemize}
    \item $\HH^2(\cA_0) = \mathcal{H}^2$;
    \item For any $\vec{p} \in P$, $\HH^{2+y_{\vec{p}}}(\cA_0)$ is a free $\Z$-module of rank one, generated by the class $z^{\vec{p}}$;
    \item If $y \in y(\NE_A) \setminus \left(\{0\} \cup \{y_{\vec{p}}\}_{\vec{p} \in P}\right)$, then $\HH^{2+y}(\cA_0) = 0$.
\end{itemize} 
\end{lem}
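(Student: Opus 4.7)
The plan is to apply Lemma~\ref{lem:agradhh}, which identifies the associated graded of the length filtration on $\HH^*(\cA_0)$ with the bigraded ring $\mathcal{J}/\mathcal{I}$. Since the length filtration is complete and bounded above and the spectral sequence collapses at $E_{|I|}$, it suffices to compute the bigraded pieces $(\mathcal{J}/\mathcal{I})^{2+y,\bullet}$ for the three relevant values of $y$; because the answers will be free of rank at most one or zero as $\Z$-modules, no non-trivial extension problems arise.

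For each target degree $2+y$, I would enumerate the monomials $z^a \theta^K$ (with $a \in \Z^I_{\ge 0}$ and $K \subseteq I$) whose $\tilde{\G}$-bidegree represents $2+y$, using the explicit equivalence $(A,\vec{m}) \sim (A + 2(1-|I|)k,\vec{m}+k\vec{e}_I)$ on $\tilde{\G}$. Non-negativity of $a$ restricts the shift parameter $k$ to a finite set, and matching the first coordinate of $2+y$ fixes $|K|$ as a linear function of $k$. For each resulting monomial I then check whether it lies in the subalgebra $\mathcal{J}$ (generated by $z_i$ and $u_i-u_j$) and whether it is killed by the ideal $\mathcal{I}$ (generated by $z^K \wedge^{top}(U_K)$ for $K \subseteq I$).

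Concretely, for $y=2$, the enumeration yields two families: $|K|=|a|=2$ with $a=\vec{e}_K$, spanning $\wedge^2 U$; and $K=\emptyset$, $a=\vec{e}_I$, giving $\prod_i z_i$. The first family is precisely $\mathcal{H}^2$, as the quotient by $\wedge^{top}(U)$ is trivial in wedge degree $2$ whenever $|I|\ge 4$. The second monomial is the ideal generator associated to $K=I$: indeed $U_I=0$ and $\wedge^{top}(U_I)=1$, so that $\prod_i z_i = z^I \wedge^{top}(U_I) \in \mathcal{I}$. Hence $\HH^2(\cA_0)\cong \mathcal{H}^2$. For $y=2+y_{\vec{p}}$ with $\vec{p}\in P$, the enumeration yields a minimal-length solution, which one identifies with $z^{\vec{p}}$ via the conventions of \cite[Section 3.4]{sheridan2021homological}; higher-length solutions differ from the minimal one by multiplication by $\prod_i z_i$ and hence lie in $\mathcal{I}$. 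For $y \in y(\NE_A)\setminus(\{0\}\cup\{y_{\vec{p}}\}_{\vec{p}\in P})$, a parallel analysis shows no monomial of the prescribed bidegree survives modulo $\mathcal{I}$: since $y$ is a non-trivial positive combination of $\{y_{\vec{p}}\}$ of total weight at least $2$, the exponent $a$ is forced either to contain $\vec{e}_I$ as a summand (so the monomial factors through $\prod_i z_i$) or to factor through one of the ideal generators $z^K \wedge^{top}(U_K)$ with $K\subsetneq I$.

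The main obstacle is the third case: the combinatorial check that no exotic monomial lies in $\mathcal{J}\setminus\mathcal{I}$ in any of these degrees. This argument tracks closely with the corresponding computation in \cite{sheridan2021homological} over $\C$; the integrality content here is simply that both $\mathcal{J}$ and $\mathcal{I}$ are manifestly defined over $\Z$, so the combinatorial identifications descend without modification to yield $\Z$-module statements.
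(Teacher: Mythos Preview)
Your approach is essentially the same as the paper's: both reduce via Lemma~\ref{lem:agradhh} to the computation of $(\mathcal{J}/\mathcal{I})^{2+y,s}$, and both appeal to the combinatorial analysis carried out in \cite[Lemma~3.8]{sheridan2021homological} (which you re-sketch rather than cite). The paper simply invokes that lemma verbatim, noting that it was written over $\C$ but applies over $\Z$ since $\mathcal{J}$ and $\mathcal{I}$ are defined integrally.

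There is one point where your write-up is not quite right. You say the extension problem is absent ``because the answers will be free of rank at most one or zero as $\Z$-modules''. This is false in the first case: $\mathcal{H}^2 \cong \wedge^2 U$ (for $|I|\ge 4$) has rank $\binom{|I|-1}{2}$. The correct reason, which is what the paper states and what your own enumeration actually establishes, is that for each fixed $y \in y(\NE_A)$ the graded pieces $\mathrm{Gr}_s\HH^{2+y}(\cA_0)$ vanish for all $s$ except a single value $s(y)$; hence the length filtration on $\HH^{2+y}(\cA_0)$ has only one nontrivial step, and $\HH^{2+y}(\cA_0) \cong \mathrm{Gr}_{s(y)}\HH^{2+y}(\cA_0)$ tautologically. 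Your case-by-case monomial analysis (the $k=0$ family surviving, all $k\le -1$ families landing in the ideal via the factor $z^{e_I}$) is exactly what demonstrates this concentration, so the fix is just to replace the rank-one remark with this observation. Also note the slip ``for $y=2$'': you mean $y=0$, i.e.\ total degree $2$.
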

\begin{proof}
    Given Lemma \ref{lem:agradhh}, these all follow from \cite[Lemma 3.8]{sheridan2021homological}, if one replaces `$\HH^*$' with `$\mathrm{Gr}_*\HH^*$' everywhere. 
    It follows that for each $y \in y(\NE_A)$, there exists $s(y)$ such that $\mathrm{Gr}_s\HH^{2+y}(\cA_0) = 0$ for $s \neq s(y)$. 
    Hence we have $\mathrm{Gr}_*\HH^{2+y}(\cA_0) \cong \HH^{2+y}(\cA_0)$, which allows us to conclude.
\end{proof}

\begin{lem}\label{lem:obst}
    If $\alpha \in \mathcal{H}^2$ satisfies $[z_i^{d_i},\alpha] = 0$ for all $i \in I$, then $\alpha = 0$.
\end{lem}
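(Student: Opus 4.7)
My plan is to compute the Gerstenhaber bracket on the associated graded $\mathcal{J}/\mathcal{I} \cong \mathrm{Gr}_*\HH^*(\cA_0)$, where it restricts to the polyvector Schouten bracket, and deduce that the hypothesis already forces $\alpha = 0$ in this model. The statement is vacuous when $|I| \leq 3$, since $\wedge^2 U = \wedge^{\mathrm{top}} U$ in that range; so I assume $|I| \geq 4$, which gives $\mathcal{H}^2 = \wedge^2 U$. Write $\alpha = \tfrac{1}{2}\sum_{j,k} A_{jk}\, u_j \wedge u_k$ with $A$ antisymmetric. The condition $\alpha \in \wedge^2 U$ is equivalent to each row sum $\sum_k A_{ik}$ vanishing, which one checks by contracting against the defining functional $\vec{e}_I \cdot (-)$ of $U$. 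A direct Schouten computation (using that $\theta_j$ acts as $\partial/\partial z_j$) then yields
\begin{equation*}
    [z_i^{d_i}, \alpha] \;=\; \pm\, d_i\, z_i^{d_i}\, v_i, \qquad v_i := \sum_k A_{ik}\, u_k,
\end{equation*}
and the row-sum condition is exactly what is needed for $v_i \in U \subset \mathcal{J}$.

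Thus the lemma reduces to the claim: if $v = \sum_k c_k u_k \in U$ and $z_i^{d_i}\, v \in \mathcal{I}$, then $v = 0$. Unpacking the definition, the length-$1$ part of $\mathcal{I}$ is spanned as a $\Z[z]$-module by the explicit elements $z^{I \setminus \{i'\}}(u_j - u_k)$ (length-$0$ generators times length-$1$ elements of $\mathcal{J}$) together with $z^{I \setminus \{i_0, j_0\}}(u_{i_0} - u_{j_0})$ (length-$1$ generators times polynomials in the $z_l$). Taking the coefficient of $\theta_m$ on both sides of an equation $z_i^{d_i}\, v = (\text{element of } \mathcal{I})$ yields a polynomial identity of the shape
\begin{equation*}
c_m\, z_i^{d_i}\, z_m \;=\; \sum_{l_0 \neq m} g_{l_0}(z) \cdot z_m \prod_{l \neq l_0,\, m} z_l,
\end{equation*}
for suitable polynomials $g_{l_0} \in \Z[z]$.

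The remainder is an elementary specialization argument. For fixed $m \neq i$, pick any $l_0 \in I \setminus \{i, m\}$ (which exists since $|I| \geq 3$) and set $z_{l_0} = 0$: the left side is unchanged while only the $l_0$-indexed term on the right survives, so after cancelling the common factor $z_i z_m$ the identity reduces to
\begin{equation*}
c_m\, z_i^{d_i - 1} \;=\; \tilde g_{l_0}(z)\, \prod_{l \in I \setminus \{i,m,l_0\}} z_l.
\end{equation*}
Since $|I| \geq 4$, the right-hand product involves a variable $z_{l_1}$ which does not appear on the left; setting $z_{l_1} = 0$ then forces $c_m = 0$. Combined with $c_i = A_{ii} = 0$ by antisymmetry, this shows $v_i = 0$ for each $i$, hence $A = 0$ and $\alpha = 0$. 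The main technical point requiring care is correctly identifying the shape of the $\theta_m$-coefficient of a generic element of $\mathcal{I}^1$ in terms of the two families of generators above; once that is in place the polynomial divisibility step is essentially automatic.
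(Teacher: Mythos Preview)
Your argument is correct and follows the same strategy as the paper: compute $[z_i^{d_i},\alpha]=\pm d_i z_i^{d_i}\iota_{\vec e_i}(\alpha)$, then show that multiplication by $z_i^{d_i}$ is injective on $\mathcal H^1\subset\mathcal J/\mathcal I$ when $|I|\ge 4$, so $\iota_{\vec e_i}(\alpha)=0$ for all $i$ forces $\alpha=0$. The paper states the injectivity claim essentially without proof, whereas you supply the explicit specialization argument; conversely, the paper is explicit about one point you gloss over, namely passing from $d_i z_i^{d_i}v_i\in\mathcal I$ to $z_i^{d_i}v_i\in\mathcal I$, which uses that $\mathcal J/\mathcal I\cong\mathrm{Gr}\,\HH^*(\cA_0)$ is torsion-free (alternatively you could simply carry the integer $d_i$ through your coefficient extraction, since the final equation $d_i c_m=0$ in $\Z$ still gives $c_m=0$).
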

\begin{proof}
    By definition of the Schouten bracket, we have 
    \begin{align*}
        [z_i^{d_i},(u_j - u_k) \wedge (u_l - u_m)] & = - \iota_{d_i z_i^{d_i - 1} dz_i}\left((z_j \partial_{z_j} - z_k \partial_{z_k}) \wedge (z_l \partial_{z_l} - z_m \partial_{z_m})\right) \\
        &= d_i z_i^{d_i} \iota_{\vec{e}_i}(\alpha).
    \end{align*}
    Now if $z_i^{d_i} \beta = 0$, for some $\beta \in \mathcal{H}^1$, then we must have $I = \{i,j,k\}$ and $\beta$ must be a multiple of $u_j - u_k$. 
    But in that case $\mathcal{H}^2 = 0$, so $\alpha = 0$.

    Therefore, as $\HH^*(\cA_0)$ is torsion-free by Lemma \ref{lem:agradhh}, we have 
    $$ [z_i^{d_i},(u_j - u_k) \wedge (u_l - u_m)] = 0 \quad \Leftrightarrow \quad \iota_{\vec{e}_i}(\alpha) = 0.$$
    If this is true for all $i$, then clearly $\alpha = 0$.
\end{proof}

\section{Proofs of Theorems \ref{thm:int_mm} and \ref{thm:main}}

\subsection{Minimal model for generator of matrix factorization category}

We consider the $\tilde{\mathbb{G}}$-graded $\mathbf{p}_* R_B$-linear category of matrix factorizations introduced in \cite[Section 4.1]{sheridan2021homological} (restricting to the case $r=1$). 
Let $\mathcal{O}_0$ denote the object of this category introduced in \cite[Section 7.2]{Sheridan2015}, and let $\cB^{dg}$ be its endomorphism algebra. 
We construct a minimal model $\cB$ for $\cB^{dg}$. 
In characteristic zero, the construction was carried out in \cite[Section 7.2]{Sheridan2015}, but the contracting data used there involved arbitrary denominators. 
We now construct contracting data over $\Z$ following the sketch in \cite[Section 5.6]{Dyckerhoff2009}.

\begin{defn}
    Let $i: (C_0,d_0) \hookrightarrow (C_1,d_1)$ be the inclusion of a subcomplex. 
    We define \emph{contracting data} for $i$ to consist of a chain map $p: (C_1,d_1) \to (C_0,d_0)$ and a homomorphism $h:C_1 \to C_1$,
    satisfying
    $$ pi = \id;\quad ip = \id - [d_1,h]; \quad h^2 = 0;\quad hi = 0;\quad ph = 0.$$
    The last three conditions are sometimes called the \emph{side conditions}.
\end{defn}

\begin{lem}\label{lem:comp_cd}
    Let $i_0:(C_0,d_0) \hookrightarrow (C_1,d_1)$ and $i_1:(C_1,d_1) \hookrightarrow (C_2,d_2)$ be inclusions of subcomplexes, admitting contracting data $(p_1,h_1)$ and $(p_2,h_2)$. 
    Then we have contracting data
    $$p = p_1 p_2,\quad h = h_2 + i_2 h_1 p_2$$
    for $i_1i_0$.
\end{lem}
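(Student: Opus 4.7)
The plan is to directly verify the five defining conditions for $(p, h)$, interpreting the ``$i_2$'' in the stated formula as $i_1$ (the only inclusion for which the formula type-checks). Each check will reduce, after a short rewrite, to one of the five relations for $(p_1, h_1)$ or $(p_2, h_2)$, together with the fact that $i_1$ and $p_2$ are chain maps ($i_1 d_1 = d_2 i_1$ and $p_2 d_2 = d_1 p_2$).

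The identity $pi = \id$ is immediate from $p_2 i_1 = \id$ and $p_1 i_0 = \id$. The main computation is $ip = \id - [d_2, h]$: starting from $i_1 i_0 p_1 p_2 = i_1(\id - [d_1, h_1])p_2$, I would use the chain-map relations to push $d_1$ across $i_1$ on the left and across $p_2$ on the right, obtaining $i_1 p_2 - [d_2, i_1 h_1 p_2] = (\id - [d_2, h_2]) - [d_2, i_1 h_1 p_2] = \id - [d_2, h]$, as desired.

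The three side conditions are routine. Expanding $h^2 = (h_2 + i_1 h_1 p_2)^2$ yields four terms: $h_2^2 = 0$; the cross terms vanish by $h_2 i_1 = 0$ and $p_2 h_2 = 0$; and the remaining term collapses via $p_2 i_1 = \id$ to $i_1 h_1^2 p_2 = 0$. Similarly, $h(i_1 i_0) = h_2 i_1 i_0 + i_1 h_1 (p_2 i_1) i_0 = i_1 h_1 i_0 = 0$ using $h_1 i_0 = 0$, and $(p_1 p_2) h = p_1 p_2 h_2 + p_1 (p_2 i_1) h_1 p_2 = p_1 h_1 p_2 = 0$ using $p_1 h_1 = 0$.

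There is no real obstacle here; this is standard two-step homological perturbation bookkeeping, and the only care required is with signs in the commutators $[d_i, -]$ when moving differentials across $i_1$ and $p_2$ in the computation of $ip$.
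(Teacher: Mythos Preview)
Your proposal is correct and is exactly the kind of direct verification the paper has in mind: the paper's proof consists solely of the line ``Easy exercise in formula-pushing,'' and you have carried out that exercise cleanly, including correctly identifying that the ``$i_2$'' in the formula for $h$ must be read as $i_1$.
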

\begin{proof}
    Easy exercise in formula-pushing.
\end{proof}

Let $R$ be a commutative ring, and let $S=R[x_1,\ldots,x_n]$ be the polynomial ring over $R$. 
We consider the ring $\tilde B = S[\theta_i,\partial/\partial \theta_i]_{i=1,\ldots,n}$ equipped with the differential $d_0 = [\delta_0,-]$, where $\delta_0 = \sum_j x_j \partial/\partial \theta_j$. 
Let $i: C \hookrightarrow \tilde B$ denote the inclusion of the subcomplex $C = R[\partial/\partial \theta_i]$, equipped with the $0$ differential.  

\begin{lem}\label{lem:contdata}
    There exist $R$-linear contracting data $(p,h)$ for $i$.
\end{lem}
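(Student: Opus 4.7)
The plan is to factor $i \colon C \hookrightarrow \tilde{B}$ as a tower of inclusions, each adjoining one pair $(x_k, \theta_k)$, construct explicit $R$-linear contracting data at each step, and compose them via Lemma~\ref{lem:comp_cd}. Concretely, set
$$\tilde{B}_k := R[x_1,\ldots,x_k]\bigl[\theta_1,\ldots,\theta_k,\partial/\partial\theta_1,\ldots,\partial/\partial\theta_n\bigr]$$
equipped with the differential $d_0^{(k)} := [\sum_{j \le k} x_j\,\partial/\partial\theta_j,\,-]$. Because the super-commutators $[\partial/\partial\theta_k,\theta_j]$ and $[\partial/\partial\theta_k,\partial/\partial\theta_j]$ vanish for $j \neq k$, and $(\partial/\partial\theta_k)^2 = 0$, the operator $[x_k\,\partial/\partial\theta_k,-]$ annihilates all of $\tilde{B}_{k-1}$, so $\tilde{B}_{k-1} \hookrightarrow \tilde{B}_k$ is a subcomplex inclusion; the endpoints are $\tilde{B}_0 = C$ (with zero differential) and $\tilde{B}_n = \tilde{B}$.

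Each element of $\tilde{B}_k$ has a unique normal form $\sum_{a\ge 0,\,\epsilon\in\{0,1\}} \gamma_{a,\epsilon}\cdot x_k^a\theta_k^\epsilon$ with $\gamma_{a,\epsilon}\in \tilde{B}_{k-1}$. The super-Leibniz rule, together with the fact that $d_0^{(k-1)}$ kills $x_k$ and $\theta_k$, yields the two identities
$$d_0^{(k)}(\gamma \cdot x_k^a) = d_0^{(k-1)}(\gamma)\cdot x_k^a, \qquad d_0^{(k)}(\gamma \cdot x_k^a\theta_k) = d_0^{(k-1)}(\gamma)\cdot x_k^a \theta_k + (-1)^{|\gamma|}\gamma \cdot x_k^{a+1}.$$
I then define $p_k(\gamma \cdot x_k^a \theta_k^\epsilon) := \gamma$ when $(a,\epsilon)=(0,0)$ and $0$ otherwise, and $h_k(\gamma \cdot x_k^a \theta_k^\epsilon) := (-1)^{|\gamma|}\gamma\cdot x_k^{a-1}\theta_k$ when $\epsilon=0$ and $a \ge 1$, and $0$ otherwise. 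Both maps are manifestly $R$-linear and integral. Verifying the chain-map property of $p_k$, the three side conditions, and the contracting identity $i_k p_k = \id - d_0^{(k)} h_k - h_k d_0^{(k)}$ reduces to direct substitution into the two displayed formulas for $d_0^{(k)}$. The point is that the nontrivial content of the construction reduces to the tautological integral splitting of multiplication by $x_k$ on $R[x_k]$, namely $x_k^a \mapsto x_k^{a-1}$ for $a \ge 1$; no denominators appear at any stage.

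Finally, iterating Lemma~\ref{lem:comp_cd} along the chain $C = \tilde{B}_0 \hookrightarrow \tilde{B}_1 \hookrightarrow \cdots \hookrightarrow \tilde{B}_n = \tilde{B}$ assembles the pieces $(p_k, h_k)$ into $R$-linear contracting data for $i$. The only real obstacle is bookkeeping the Koszul signs across the super-commutators (manifested in the $(-1)^{|\gamma|}$ factors in the formula for $h_k$ and in the induction step), but this is routine and costs no arithmetic.
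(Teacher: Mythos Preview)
Your proof is correct and follows essentially the same approach as the paper: factor the inclusion into a tower adjoining one pair $(x_k,\theta_k)$ at a time, write down explicit integral contracting data at each stage, and compose via Lemma~\ref{lem:comp_cd}. The only organizational difference is that the paper first uses the observation $(\tilde B,d_0) \cong (S[\theta_i],\delta_0) \otimes_R C$ to strip off the $\partial/\partial\theta_i$ generators before filtering, whereas you carry them along throughout; this forces you to track the Koszul sign $(-1)^{|\gamma|}$ explicitly in $h_k$, but the content is identical.
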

\begin{proof}
    First, we observe that as a chain complex of $R$-modules, $(\tilde B,d_0)$ is isomorphic to the tensor product of the subcomplex $(S[\theta_i],\delta_0)$ with the free $R$-module $C$. 
    Thus it suffices to construct contracting data for the inclusion $i:(R,0) \hookrightarrow (S[\theta_i],\delta_0)$. 
    By Lemma \ref{lem:comp_cd}, it suffices to construct $R$-linear contracting data for the inclusions
    $$i_j: R[x_i,\theta_i]_{i=1,\ldots,j-1} \hookrightarrow R[x_i,\theta_i]_{i=1,\ldots,j}.$$
    We choose the contracting data $(p_j,h_j)$ to be $R[x_i,\theta_i]_{i=1,\ldots,j-1}$-linear, and satisfy
    \begin{align*}
p_j(x_j^a \theta_j^b) &= \begin{cases}
        1 & \text{if $a=b=0$;}\\
        0 &\text{otherwise;}
    \end{cases} \\
    h_j(x_j^a\theta_j^b) &= \begin{cases}
        x_j^{a-1}\theta_j^{b+1} & \text{if $a \ge 1$ and $b=0$};\\
        0 & \text{otherwise}.
    \end{cases}
    \end{align*}
    One easily verifies that $(p_j,h_j)$ are contracting data.
\end{proof}

Given the contracting data from Lemma \ref{lem:contdata}, a minimal model $\cB$ for $\cB^{dg}$ can be constructed as in \cite[Section 7.2]{Sheridan2015}. 
As shown there, the underlying $R$-module for $\cB$ is isomorphic to that of $\cA$. 

\begin{lem}
    Let $\mu^*$ denote the $A_\infty$ products on $\cB$, and $\mu^2_{ext}$ the exterior algebra product. 
    Then $\Xi(\mu^* - \mu^2_{ext}) = W$, where $\Xi$ is the HKR isomorphism from \Cref{lem:hkr}.\end{lem}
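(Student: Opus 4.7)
The plan is to apply the homological perturbation lemma with the integral contracting data of \Cref{lem:contdata} and then evaluate the resulting tree formula under $\Xi$ directly. This strategy parallels Section 7.2 of \cite{Sheridan2015} and the sketch of \cite[Section 5.6]{Dyckerhoff2009}; the content of the lemma is to verify that everything still works when we use our new $\Z$-linear contracting data rather than the characteristic-zero ones.

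First, I would write out the minimal model products $\mu^k$ as a signed sum over rooted planar trees with $k$ leaves, decorating leaves by the inclusion $i:C\hookrightarrow \tilde B$, internal vertices by the dg multiplication on $\cB^{dg}$ (which incorporates both the Koszul structure and the superpotential deformation carried by the matrix factorization $\mathcal{O}_0$), internal edges by the homotopy $h$ from \Cref{lem:contdata}, and the root by the projection $p$. The tree with a single internal vertex produces exactly $\mu^2_{\text{ext}}$, so subtracting it restricts the sum to trees whose internal edges transport a $W$-contribution through $h$.

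Next, I would evaluate $\Xi(\mu^*-\mu^2_{\text{ext}}) = \sum_{k\ge 3} (\mu^k-\mu^2_{\text{ext}})(\mathbf{z},\dots,\mathbf{z})$ with $\mathbf{z}=\sum_{i\in I} z_i\theta_i$. Using the explicit formula for $h$ (which sends $x_j^a\theta_j^b \mapsto x_j^{a-1}\theta_j^{b+1}$ when $b=0$ and $a\ge 1$, and vanishes otherwise) and the fact that $p$ picks off the $R[\partial/\partial\theta_i]$-component, one checks that for each monomial $z^\alpha$ appearing in $W$ there is a unique tree shape whose contribution to $\Xi$ is that monomial (with its correct integer coefficient from $W$), and that every other tree either vanishes by the side conditions $hi = 0$, $ph=0$, $h^2=0$ or cancels against another tree in pairs.

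The main obstacle is the combinatorial sign bookkeeping that makes this telescoping clean. In characteristic zero this is handled in \cite[Section 7.2]{Sheridan2015}, where the identity $\Xi(\mu^*-\mu^2_{\text{ext}})=W$ is the defining feature of the minimal model of the matrix-factorization endomorphism dg algebra. Because our $(p,h)$ from \Cref{lem:contdata} satisfy the same contracting-data axioms ($pi=\id$, $ip=\id-[d_0,h]$, $h^2=hi=ph=0$) as the rational ones used there, and because the superpotential $W$ is defined over $\Z$ to begin with, the tree-sum computation transfers verbatim and yields $W$ on the nose with integer coefficients.
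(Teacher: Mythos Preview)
Your overall strategy matches the paper's: apply homological perturbation with the integral contracting data and evaluate the tree sum under $\Xi$. However, there are two genuine gaps in the execution.

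First, the formula you write for $h$ is only the single-variable homotopy $h_j$ from the proof of \Cref{lem:contdata}. The homotopy that actually enters the minimal-model construction is the composite built via \Cref{lem:comp_cd}, and it has a more intricate form: on a basis element $z^a\theta^b\partial^K$, setting $A(a)=\max\{i:a_i\neq 0\}$ and $B(b)=\max\{i:b_i\neq 0\}$, one has $h(z^a\theta^b\partial^K)=(\theta_j/z_j)z^a\theta^b\partial^K$ if $j=A(a)$ and $B(b)\le A(a)-1$, and zero otherwise. This asymmetric ordering condition is what cuts down the contributing trees, and it is not visible from the single-variable formula.

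Second, the argument ``our $(p,h)$ satisfy the same axioms as the rational ones, so the tree-sum computation transfers verbatim'' does not hold. The contracting-data axioms determine the minimal model only up to $A_\infty$-isomorphism, not on the nose; different choices of $h$ satisfying the same axioms yield genuinely different cochains $\mu^*$, and hence potentially different values of $\Xi(\mu^*-\mu^2_{\mathrm{ext}})$. The paper therefore does not appeal to the characteristic-zero computation, but instead plugs the explicit composite $h$ above into the tree formula and uses a filtration by $\theta$-degree $|b|$ to show that the only surviving trees are the ``staircase'' trees of \cite[Figure 14]{Sheridan2015}: for each monomial $z^a$ in $w_j$ the inputs must be $(\partial_1,\ldots,\partial_1,\partial_2,\ldots,\partial_{|I|},\partial_j)$ in that specific order, and the output contributes exactly $z_jz^a$ to $\Xi$. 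Your claim about ``cancellation in pairs'' is not what happens; the irrelevant trees vanish individually because of the $B(b)\le A(a)-1$ condition on $h$.
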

\begin{proof}
     The argument follows that of \cite[Proposition 7.1]{Sheridan2015} closely. 
     It helps to have a formula for the contracting homotopy $h$. 
     For a basis element $z^a\theta^b \partial^K$, we define $A(a) := \max \{i:a_i \neq 0\}$ and $B(b) = \max \{i:b_i \neq 0\}$. 
     If $A(a) = j$, then we have
     $$h(z^a\theta^b\partial^K) = \begin{cases}
         \frac{\theta_{j}}{z_j} z^a \theta^b \partial^K & \text{ if $B(b) \le A(a) - 1$}\\
         0 & \text{otherwise.}
     \end{cases}$$
     Plugging this into the formula for $\tilde \mu^*:=\mu^* - \mu^2_{ext}$, one sees that the only contributions to $\tilde \mu^k(\partial_{i_1},\ldots,\partial_{i_k})$ come from the trees illustrated in Figure 14 of \cite{Sheridan2015} (this is proved by considering the filtration by $\theta$-degree, $|b|$). 
     Recall that the construction of $\mathcal{O}_0$ depends on a choice of $w_j \in S$ such that $\sum_j z_j w_j = W$. 
     For each monomial $z_1^{a_1}\ldots z_{|I|}^{a_{|I|}}$ appearing in $w_j$, we have a contribution to $\tilde \mu^*(\partial_{i_1},\ldots,\partial_{i_k})$ which vanishes unless $(i_1,\ldots,i_k) = (1,\ldots,1,2,\ldots,|I|,j)$ where there are $a_1$ copies of $1$, followed by $a_2$ copies of $2$, etc., up to $a_{|I|}$ copies of $I$, followed by $j$.  
    Plugging into the formula for $\Xi$, one sees that each such monomial yields a contribution of $z_jz^a$ to $\Xi(\tilde \mu^*)$, which gives the result.
\end{proof}
\subsection{Order zero}

Let $\cB_0 = \cB \otimes_{R_B} R_B/\fm_B$. 
This is a minimal model for the endomorphism algebra of $\mathcal{O}_0$ in the category of matrix factorizations of $W_0 = - z^{\vec{e}_{I}}$. 
We have an isomorphism $H(\cB_0) \cong H(\cA_0)$ on the level of cohomology, which upgrades to a quasi-isomorphism $\cB_0 \cong \cA_0$ by Corollary \ref{cor:hhdef} and \cite[Lemma 3.2]{Seidel2003}. 

\subsection{Applying versality}

We have proved the existence of a quasi-isomorphism $F_0:\cB_0 \to \cA_0$. We have also proved that the first-order deformation class corresponding to $\vec{p} \in P$, for both $\cA$ and $\cB$, is $\pm z^{\vec{p}}$ (in the case of $\cB$, this follows from the fact that $\Xi(\tilde{\mu}^*) = W$). 
In particular, we have $d_i \vec{e}_i \in P$ for all $i$, and the corresponding first-order deformation class is $\pm z_i^{d_i}$. 

\begin{prop}\label{prop:applyvers}
    There exists a mirror map $\Psi^*:R_B \to R_A$, sending $\nov_{\vec{p}} \mapsto \nov_{\vec{p}} \cdot \psi_{\vec{p}}$ where $\psi_{\vec{p}} = \pm 1 \text{ mod }\fm_A$ for all $\vec{p} \in P$, and a curved filtered quasi-isomorphism $F: \Psi^* \cB \to \cA$ with $F = F_0$ modulo $\fm_A$.
\end{prop}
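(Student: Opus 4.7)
The plan is to apply the versality criterion Proposition \ref{prop:vers} from Appendix \ref{sec:vers}, whose hypotheses are designed to be verified by the Hochschild computations of Section \ref{subsec:verifyVersality}. The proof is an inductive construction of $\Psi^*$ and $F$ order-by-order along the $\fm_A$-adic filtration on $R_A$, starting from the quasi-isomorphism $F_0 \colon \cB_0 \to \cA_0$ established above; the first-order components of $\Psi^*$ and $F$ match automatically because the first-order deformation classes on both sides are $\pm z^{\vec{p}}$.

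At each inductive step the obstruction to extending one more order decomposes over $\eta \in \NE_A$, with the component in $\NE_A$-degree $\eta$ lying in $\HH^{2+y(\eta)}(\cA_0)$. By Lemma \ref{lem:hh2}, this group vanishes unless $y(\eta) = 0$ or $y(\eta) = y_{\vec{p}}$ for some $\vec{p} \in P$. In the latter case the group is free of rank one on $\pm z^{\vec{p}}$, which coincides up to sign with the first-order deformation class of $\cA$ in that direction; hence the obstruction is cancelled by modifying the $\nov_{\vec{p}}$-coefficient of $\Psi^*$ at the current order, while preserving $\psi_{\vec{p}} \equiv \pm 1 \pmod{\fm_A}$.

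The critical case is $y(\eta) = 0$, where the obstruction $\alpha$ lies in $\HH^2(\cA_0) = \mathcal{H}^2$ and admits no corresponding $\psi_{\vec{p}}$ to absorb it. I would show $\alpha$ is automatically zero as follows. Each $d_i \vec{e}_i \in P$, so the first-order class $\pm z_i^{d_i}$ is realized in both $\cA$ and $\Psi^*\cB$. Comparing the $A_\infty$ relations for the two algebras in $\NE_A$-degree $\eta + k_{d_i \vec{e}_i}$, the discrepancy reduces to $[z_i^{d_i}, \alpha]$ modulo terms that vanish by the inductive hypothesis; since both algebras genuinely satisfy the $A_\infty$ relations, this forces $[z_i^{d_i}, \alpha] = 0$ for every $i \in I$. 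Lemma \ref{lem:obst} then yields $\alpha = 0$, closing the induction.

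The main obstacle, and the reason for developing the new Proposition \ref{prop:vers}, is that the previous argument in \cite{sheridan2021homological} invoked an auxiliary group action to rule out deformations along $\mathcal{H}^2$, which required inverting the group's order in the coefficient ring. Working over $\Z$ we must instead rule them out Lie-algebraically via Lemma \ref{lem:obst} and stay at the $A_\infty$ level throughout, since passing to $L_\infty$ would reintroduce unwanted denominators.
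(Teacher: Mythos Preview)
Your proposal is correct and matches the paper's approach exactly: the paper's proof is simply to invoke Proposition \ref{prop:vers}, whose three hypotheses are verified by Lemma \ref{lem:hh2} (items 1 and 3) and Lemma \ref{lem:obst} (injectivity of $Obs$, using that $d_i\vec{e}_i \in P$ so $z_i^{d_i}$ appears among the $a_{\vec{p}}$). Your additional paragraphs correctly unpack the mechanism inside Proposition \ref{prop:vers}; one minor imprecision is that the induction there is not along the $\fm_A$-adic filtration but along the refined partial order of Lemma \ref{lem:porder}, precisely so that the $y(\eta)=0$ step can appeal to the already-established vanishing of $\delta(F)_{\eta+e_{\vec p}}$-type terms---but since you are citing Proposition \ref{prop:vers} as a black box this does not affect the validity of your argument.
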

\begin{proof}
    Follows from Proposition \ref{prop:vers}, whose hypotheses are verified by Lemmas \ref{lem:hh2} and \ref{lem:obst}. (It is also clear that $\cA$, $\cB$ are both free modules of finite rank, and in particular topologically free.)
\end{proof}

\subsection{Proof of Theorem \ref{thm:main}}

Let $\mathbf{q}:\G \to \Z$ be the morphism projecting $\Z \oplus M$ onto the $\Z$ factor. 
Let $\mathbb{B}$ denote the subcategory of the category of matrix factorizations consisting of all shifts of $\mathcal{O}_0$. 
Define $\tilde{\mathbb{A}}' = \mathbf{q}_* \hat{\mathbb{A}} = \mathbf{q}_* \mathbf{p}^* \mathbb{A}$, $\tilde{\mathbb{B}} = \mathbf{q}_* \mathbf{p}^* \mathbb{B}$. 
The curved filtered quasi-isomorphism from Proposition \ref{prop:applyvers} determines a curved filtered quasi-isomorphism $\Psi^*\tilde{\mathbb{B}} \to \tilde{\mathbb{A}}'$. 
As $\tilde{\mathbb{B}}$ is uncurved, it embeds canonically into $\tilde{\mathbb{B}}^{bc}$, by taking the zero bounding cochain on each object. 
Then we obtain, by \cite[Lemma 2.6]{Sheridan2017}, a filtered quasi-equivalence of uncurved $A_\infty$ categories, $\Psi^* \tilde{\mathbb{B}} \to (\tilde{\mathbb{A}}')^{bc}$, which is a quasi-isomorphism onto its image by a spectral sequence comparison argument. We denote the image by $\tilde{\mathbb{A}}$.

We have an embedding $\tilde{\mathbb{A}} \hookrightarrow \fuk(X,D)^{bc}$ by construction, and $\tilde{\mathbb{B}} \hookrightarrow \grmf_\Gamma(S,W)$ as in \cite[Section 4.4]{sheridan2021homological}. 
Passing to the fibre over the $\Lambda_{\Bbbk,Q}$-point $d(\lambda)$, and setting $b=\Psi(d(\lambda))$, we obtain a diagram
\begin{equation}
    \label{eq:corehms}
    \begin{tikzcd}
    \fuk(X,\omega;\Lambda_{\Bbbk,Q})^\bc & \grmf_\Gamma(\Lambda_{\Bbbk,Q}[z_i]_{i \in I},W_b)\\
\tilde{\mathbb{A}}_{d(\lambda)} \ar[u,hookrightarrow] \ar[r,leftrightarrow,"\sim"] & \tilde{\mathbb{B}}_b \ar[u,hookrightarrow]
\end{tikzcd}
\end{equation}

\begin{lem}\label{lem:sing_iso}
    If $\mathrm{char}(\Bbbk) \nmid \lcm(\lambda)$, then $W_b$ has isolated singular locus in the sense of \cite[Section 3.1]{Dyckerhoff2009}.
\end{lem}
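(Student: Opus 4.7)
The plan is to apply the tropical regularity criterion from Section \ref{sec:tropreg}: $W_b$ has isolated singular locus provided that for every cell $\sigma$ of the regular polyhedral subdivision of the Newton polytope $\iota(\{1\}\times\Delta)$ induced by the valuations of the coefficients of $W_b$, the associated initial form $W_b^\sigma$ has no critical points in the algebraic torus $(\bar\Bbbk^*)^I$, where $\bar\Bbbk$ denotes an algebraic closure of $\Bbbk$.

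First I would verify the tropical input. By Proposition \ref{prop:applyvers}, $\psi_{\vec p}\equiv\pm 1\bmod\fm_A$, and $d(\lambda)^*$ sends $\nov_{\vec p}\mapsto T^{\lambda_{\vec p}}$, so $\mathrm{val}(b_{\vec p})=\lambda_{\vec p}$ with leading coefficient $\pm 1\in\Bbbk^*$. Using the dictionary of Section \ref{subsec:tordata}, the regular subdivision of $\iota(\{1\}\times\Delta)$ determined by the height function $\iota(1,\vec p)\mapsto\lambda_{\vec p}$ (extended by $\iota(1,0)\mapsto 0$) is identified via $\iota(1,-)$ with the simplicial decomposition of $\Delta$ induced by $\Sigma_\lambda$, i.e., the decomposition appearing in the definition of $\lcm(\lambda)$. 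The MPCS condition guarantees that each cell $\sigma$ of this subdivision is a simplex with vertices $\vec p_0,\ldots,\vec p_k\in P\cup\{0\}$.

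Next I would verify that no such $\sigma$ admits a torus critical point. The initial form reads
\[
W_b^\sigma(z)=\sum_{\alpha=0}^k\varepsilon_\alpha z^{\iota(1,\vec p_\alpha)},\qquad\varepsilon_\alpha\in\{\pm 1\}.
\]
Multiplying the critical equations $\partial_{z_i}W_b^\sigma=0$ by $z_i$ and using $z_i\partial_{z_i}z^v=v_i z^v$ translates a torus critical point into a relation
\[
\sum_{\alpha=0}^k c_\alpha\,\iota(1,\vec p_\alpha)=0\qquad\text{in }\bar\Bbbk^I,
\]
where $c_\alpha=\varepsilon_\alpha z^{\iota(1,\vec p_\alpha)}\in\bar\Bbbk^*$. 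The columns $\iota(1,\vec p_\alpha)$ all lie on the affine hyperplane $\{\sum x_i=d\}\subset\Z^I$ (which does not contain the origin), and the $\vec p_\alpha$ are affinely independent in $M_\R$, so an elementary determinant computation identifies the maximal minors of the $|I|\times(k+1)$ matrix with columns $\iota(1,\vec p_\alpha)$ with (up to sign) the normalized affine volume of $\sigma$. By construction this volume divides $\lcm(\lambda)$, which is invertible in $\Bbbk$ by hypothesis, so the columns are $\bar\Bbbk$-linearly independent, forcing $c=0$ and contradicting $c_\alpha\neq 0$.

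The main technical obstacle is the bookkeeping in the first paragraph --- translating between the combinatorial data $(\Sigma_\lambda,\lambda)$ and the regular subdivision of $\iota(\{1\}\times\Delta)$ used by the tropical regularity criterion --- together with the handling of boundary cells $\sigma$ contained in a facet of $\Delta$, which correspond to tropical critical points with some $z_i=0$. These are disposed of by restricting $W_b$ to the corresponding coordinate subspace, which restricts to a polynomial of the same form supported on a face of $\Delta$, and reducing inductively to a lower-dimensional version of the same assertion.
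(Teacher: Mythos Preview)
Your approach is essentially the paper's own: the paper proves the lemma by running the argument of \cite[Proposition~4.4]{sheridan2021homological} with Proposition~\ref{prop:tropreg} (the tropical regularity criterion of Appendix~\ref{sec:tropreg}) substituted for the characteristic-zero input from \cite{Maclagan2007}, which is exactly the mechanism you sketch.

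One caution on your linear-algebra step. The maximal minors of the $|I|\times(k+1)$ integer matrix with columns $\iota(1,\vec p_\alpha)$ do \emph{not} coincide with the normalized affine volume of $\sigma$ in $M$: the embedding $\iota\colon\Z\oplus M\hookrightarrow\Z^I$ contributes an index factor (for instance, when $\Delta$ is the simplex of Example~\ref{eg:hyp_in_pn} with $n=3$, each top simplex has $M$-volume $1$ but the corresponding $3\times 3$ determinant is $9$). So $\bar\Bbbk$-linear independence of the columns does not follow directly from $\mathrm{char}(\Bbbk)\nmid\lcm(\lambda)$. The proof of Proposition~\ref{prop:tropreg} avoids this by a finer argument: rather than computing a determinant, it pairs the putative relation with a single cocharacter chosen so that exactly one term survives, with coefficient equal to an \emph{affine distance}, which divides the affine volume of the simplex in the ambient lattice. (Relatedly, the hyperplane containing the columns is $\{\vec q\cdot x=d\}$, not $\{\sum_i x_i=d\}$.) This is precisely the bookkeeping you flag as the main technical obstacle; once it is done carefully your outline goes through.
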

\begin{proof}
    This follows by the proof of \cite[Proposition 4.4]{sheridan2021homological} using Proposition \ref{prop:tropreg} (which applies by our assumption on the characteristic of $\Bbbk$) in place of \cite[Theorem 4.5.1]{Maclagan2007}.
    \end{proof}

By Lemma \ref{lem:sing_iso}, we can apply the argument of \cite[Proposition 4.7]{sheridan2021homological} to show that $\tilde{\mathbb{B}}_b$ split-generates $\grmf_\Gamma(\Lambda_{\Bbbk,Q}[z_i]_{i \in I},W_b)$. 
Furthermore, as $W_b$ has an isolated singularity by Lemma \ref{lem:sing_iso}, \cite{Orlov2009} shows that this category of equivariant graded matrix factorizations is equivalent to $D^bCoh(\check{X}_b)$.

Note that $\check{X}_b$ is smooth and also tame as a stack because the stabilizers are finite closed subgroups of an algebraic torus. It therefore follows from \cite[Theorem 6.4]{Bergh2016} that $D^bCoh(\check{X}_b)$ is homologically smooth as a dg-category. Given this, automatic split-generation \cite{Ganatra2016, Sanda2021} (compare the proof of \cite[Proposition 4.8]{sheridan2021homological}) shows that $\tilde{\mathbb{A}}_{d(\lambda)}$ split-generates $\fuk(X,\omega;\Lambda_{\Bbbk,Q})^\bc$. This completes the proof of Theorem \ref{thm:main}.

\subsection{Proof of Theorem \ref{thm:int_mm}}

We will prove Theorem \ref{thm:int_mm} by verifying the hypotheses of \cite[Theorem C.20]{Sheridan2017}. 
We start by choosing $\lambda$ satisfying the MPCS condition; and the amb-nice cone $\Nef$ required for the definition of the ambient relative Fukaya category $\fuk(X,D)$. 
Let $R=\C[[\nov_{\vec{p}}]]_{\vec{p} \in P}$, and $\check{X}_R \subset \check{Y}_R$ be the $R$-scheme with defining equation
$$z^{\iota(1,0)} = \sum_{\vec{p} \in P} \nov_{\vec{p}} \cdot z^{\iota(1,\vec{p})}.$$
We must show that, for any `Novikov disc map' $d^*:R_A \to \Lambda_{\C,\R}$, if we set $b = \Psi(d)$, then the fibre $\check{X}_{\Psi(d)}$ is smooth and split-generated by $\tilde \cB_b$. 
Note that $\val(\Psi(d)) = \val(d) \in (\R_{>0})^P$ lies in the interior of $\Nef$, and hence satisfies the MPCS condition. 
This is sufficient for the proofs of smoothness of $\check{X}_b$, and split-generation of its derived category by $\tilde{\mathbb{B}}_b$, from the previous section to go through.

Applying \cite[Theorem C.20]{Sheridan2017}, we have that $\Psi$ is a Hodge-theoretic mirror map in the sense of \cite[Definition C.8]{Sheridan2017}. 
This uniquely characterizes the restriction of $\Psi^*$ to $R_{cl}$, the subalgebra in degree $0 \in H_1(X \setminus D) = M$, by the fact that it preserves flat coordinates, see \cite[Theorem C.11]{Sheridan2017}. 
Note that we can identify $R_{cl}$ with $\C[[K_{\ge 0}]]$ from the introduction. 
By \cite[Section 6.3.4]{coxkatz}, the unique mirror map which preserves flat coordinates is given by $\Phi|_{\C[[K_{\ge 0}]]}$, where $\Phi$ is the map from \eqref{eq:mm_formula} (see Appendix \ref{sec:comp_mm}). 
Thus we have $\Phi|_{\C[[K_{\ge 0}]]} = \Psi^*|_{\C[[K_{\ge 0}]]}$. 
This completes the proof of Theorem \ref{thm:int_mm}, because $\Psi^*$ maps $R_B$ to $R_A := \Z[[\NE_A]]$ by construction.

\appendix

\section{Versality result}\label{sec:vers}

In this section, we prove a versality result similar to \cite[Lemma 2.15]{Sheridan2017}. In contrast to that result, we remove an assumption that the coefficient ring is a field of characteristic zero, and take into account the obstruction map; on the other hand, we do not prove an equivariant version, cf. Remark \ref{rmk:equiv}.  

\subsection{Statement of the result}

We describe the setting for our deformation theory, which is a modification of the setup of \cite[Section 2]{Sheridan2017}.

Let $\Bbbk$ be a regular commutative ring. 
Let $\Z \to Y \to \Z/2$ be homomorphisms of abelian groups, whose composition is non-zero. 
All of our objects will be $Y$-graded; an object will be said to have degree $k \in \Z$ if its degree is the image of $k$ under $\Z \to Y$; and objects acquire $\Z/2$-gradings via the homomorphism $Y \to \Z/2$, which allows us to define Koszul signs.

Let $P$ be a finite set, and $P \to Y: p \mapsto y_p$ be an injective map with all $y_p$ lying in the preimage of $0 \in \Z/2$. The map induces a homomorphism $y:\Z^P \to Y$ which sends $e_p$ (the $p$th generator of $\Z^P$) to $y_p \in Y$.

Let $\NE_\R \subset \R^P$ be a strongly convex cone with vertex at the origin, and $\NE_A = \NE_\R \cap \Z^P$. 
We assume that $\NE_A$ is \emph{nice}, in the following sense:

\begin{defn}\label{def:nice}
We say that $\NE_A$ is \emph{nice} if for all $p \in P$:
\begin{enumerate}
    \item[] \textbf{(Nice $1_p$)} \label{it:nice_1} $e_p$ lies in $\NE_A$;
    \item[] \textbf{(Nice $2_p$)} \label{it:nice_2} if $e_p = u+v$ with $u$ and $v$ in $\NE_A$, then one of $u$ or $v$ is $0$;
    \item[] \textbf{(Nice $3_p$)} \label{it:nice_3} if $u \in \NE_A$ and $y(u) = y_p$, then $u - e_p \in \NE_A$.
\end{enumerate}
Note that this condition implicitly depends on the choice of map $y$.
\end{defn}

We define $\NE_B = (\Z_{\ge 0})^P$. While $\NE_B$ satisfies \textbf{(Nice $1_p$)} and \textbf{(Nice $2_p$)} for all $p$, it need not satisfy \textbf{(Nice $3_p$)}.

We consider the algebras $\tilde{R}_B = \Bbbk[\NE_B]$, respectively $\tilde{R}_A = \Bbbk[\NE_A]$, whose elements are finite $\Bbbk$-linear combinations of monomials $\nov^u$ for $u \in \NE_B$, respectively $\NE_A$. 
We define $\nov_p := \nov^{e_p}$. 
We equip $\tilde{R}_B$ and $\tilde{R}_A$ with $Y$-gradings by putting $\nov^u$ in degree $-y(u)$. 
Let $\tilde{\fm}_B$ be the ideal corresponding to the origin of the cone, and let $R_B = \Bbbk[[\NE_B]]$ be the $Y$-graded $\tilde\fm_B$-adic completion of $\tilde{R}_B$; define $R_A = \Bbbk[[\NE_A]]$ similarly. 

\begin{rmk}
    Our assumption that $\NE_A$ is nice is equivalent to the fact that $R_A$ is nice in the sense of \cite[Definition 2.3]{Sheridan2017}: \textbf{(Nice $1_p$)} says that $\nov_p$ lies in $\tilde \fm_A$; \textbf{(Nice $2_p$)} says that $\nov_p \notin \tilde\fm_A^2$; and \textbf{(Nice $3_p$)} says that $\nov_p$ generates the $-y_p$-graded piece of $\tilde R_A$ as a $(\tilde R_A)_0$-module, where $(\tilde R_A)_0$ denotes the graded piece of $\tilde R_A$ in degree $0 \in Y$.
\end{rmk}

\begin{defn}\label{def:top_free_mod}
    Let $R$ be a commutative $Y$-graded $\Bbbk$-algebra, $\fm \subset R$ an ideal such that the $\fm$-adic filtration is graded complete and $R/\fm \simeq \Bbbk$, and $M$ a graded $R$-module. 
    We say that $M$ is \emph{topologically free} if
    \begin{itemize}
        \item $M /\fm M$ is a free $\Bbbk$-module;
        \item there exists an isomorphism $M \simeq (M/\fm M) \hat{\otimes} R$ lifting the natural map $M \to M/\fm M$, where $(M/\fm M) \hat{\otimes} R$ denotes the graded completion of $(M/\fm M) \otimes R$ with respect to the $\fm$-adic filtration. 
    \end{itemize}
\end{defn}

\begin{defn}[Cf. Remark 2.2 in \cite{Seidel2003}]\label{def:top_cat}
    Let $(R,\fm)$ be as in \Cref{def:top_free_mod}. A \emph{topological} $R$-linear $A_\infty$ category is a curved $A_\infty$ category whose hom-spaces are topologically free $R$-modules and composition maps are $R$-multilinear, with curvature $\mu^0_X \in \fm \cdot \hom(X,X)$ for any object $X$.
\end{defn}

Note that the $\fm$-adic filtration equips any topological $R$-linear $A_\infty$ category with the structure of a curved filtered $R$-linear $A_\infty$ category, in the sense of \cite[Definition 2.1]{perutz2022constructing}. 
Thus we may define notions of bounding cochain, curved filtered $A_\infty$ functor, and curved filtered quasi-equivalence for topological $A_\infty$ categories, as in \cite[Section 2]{perutz2022constructing}.

Associated to a topological $R$-linear $A_\infty$ category $\cA$, there is an uncurved graded $\Bbbk$-linear $A_\infty$ category $\cA_0 := \cA/\fm \cA$, whose morphism spaces are free $\Bbbk$-modules.  
Associated to a curved filtered $A_\infty$ functor $F: \cA \to \cB$ between topological $A_\infty$ categories, there is an uncurved $\Bbbk$-linear $A_\infty$ functor $F_0:\cA_0 \to \cB_0$; and $F$ is a curved filtered quasi-equivalence if and only if $F_0$ is a quasi-equivalence.

\begin{rmk}\label{rmk:finite_top_free}
    A free $R$-module of finite rank is topologically free. 
    In the present work, all topologically free $R$-modules we consider will in fact have finite rank; nevertheless, we expect the generalization to arbitrary rank to be helpful for future applications.
\end{rmk}

Let $\cB$ be a $Y$-graded topological $R_B$-linear $A_\infty$ category. 
Similarly, let $\cA$ be a $Y$-graded topological $R_A$-linear $A_\infty$ category.
We suppose that there exists an $A_\infty$ quasi-equivalence $F_0:\cB_0 \to \cA_0$.

Because $\NE_B$ satisfies \textbf{(Nice $1_p$)} and \textbf{(Nice $2_p$)}, we have a well-defined first-order deformation class $b_p \in \HH^{2+y_p}(\cB_0)$ of $\cB$ for all $p \in P$; and similarly $a_p \in \HH^{2+y_p}(\cA_0)$. 
For any $p \in P$, we define the $p$th `obstruction map'
\begin{align*}
    Obs_p: \HH^{2}(\cA_0) & \to \HH^{3+y_p}(\cA_0)\\
    Obs_p(\alpha) &:= [a_p,\alpha],
\end{align*}
where $[-,-]$ denotes the Gerstenhaber bracket; and the `total obstruction map'
\begin{align*}
    Obs: \HH^2(\cA_0) &\to \bigoplus_{p \in P} \HH^{3+y_p}(\cA_0),\\
    Obs(\alpha) &:= \bigoplus_{p \in P}Obs_p(\alpha).
\end{align*}
We can now state our versality result.

\begin{prop}\label{prop:vers}
In the above situation, suppose that:
\begin{enumerate}
   \item $\HH^{2+y_p}(\cA_0)$ is a free $\Bbbk$-module of rank $1$, spanned by $a_p$, for all $p \in P$, and similarly for $\cB$;
    \item $Obs$ is injective;
    \item $\HH^{2+y}(\cA_0) = 0$ for $y \in y(\NE_A) \setminus \left( \{0\} \cup \{y_p\}_{p \in P}\right)$.
\end{enumerate}
Then there exists a $Y$-graded $\Bbbk$-algebra homomorphism $\Psi^*: R_B \to R_A$, sending $\nov_p \mapsto \nov_p \cdot \psi_p$ for some units $\psi_p \in R_A$ of degree $0 \in Y$, together with a curved filtered quasi-equivalence $F: \Psi^*\cB \to \cA$, with $F = F_0$ modulo $\fm$. 
\end{prop}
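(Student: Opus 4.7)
The plan is to construct $\Psi^*$ and $F$ inductively with respect to the $\fm_A$-adic filtration, building the data modulo $\fm_A^{k+1}$ at each stage and passing to the inverse limit using graded completeness of $R_A$. The base of the induction is the given quasi-equivalence $F_0 \colon \cB_0 \to \cA_0$. At each inductive step, I will choose arbitrary $R_A$-linear lifts $\Phi^*$ of $\Psi^*_k$ and $G$ of $F_k$ to the next order, and examine the discrepancy
$$\eta := G^* \mu_\cA - \mu_{\Phi^* \cB} \in CC^2(\cA_0) \otimes \bigl(\fm_A^{k+1}/\fm_A^{k+2}\bigr).$$
The $A_\infty$-relations for $\cA$ and $\Phi^*\cB$, together with the inductive hypothesis, imply that $\eta$ is a Hochschild cocycle, so I may decompose $\eta = \sum_y \eta_y$ by the $Y$-grading and consider the classes $[\eta_y] \in \HH^{2+y}(\cA_0) \otimes (\fm_A^{k+1}/\fm_A^{k+2})_{-y}$.

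The three cohomology hypotheses cover the three possible $Y$-degrees of $[\eta_y]$. For $y \notin \{0\} \cup \{y_p\}_{p \in P}$, hypothesis (3) forces $[\eta_y]=0$, so $\eta_y$ is a coboundary that can be absorbed by adjusting the order-$(k+1)$ component of $G$. For $y = y_p$, hypothesis (1) says $[\eta_{y_p}] = c_p \cdot a_p$ for some scalar $c_p$; using \textbf{(Nice $3_p$)}, I factor $c_p = \nov_p \cdot \phi_p$ with $\phi_p$ of degree $0 \in Y$, and absorb it by redefining $\psi_p \mapsto \psi_p - \phi_p$ in $\Phi^*$. Since $a_p$ is the first-order deformation class of $\cA$ in the $\nov_p$-direction, this modification precisely cancels $[\eta_{y_p}]$, after which the residual coboundary is absorbed into $G$.

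The essential difficulty is the degree-$0$ component $[\eta_0] \in \HH^2(\cA_0)$, where no mirror-map variable is available for adjustment. This is where hypothesis (2) is deployed: I will show that for each $p \in P$ one has $Obs_p([\eta_0]) = [a_p, [\eta_0]] = 0$ in $\HH^{3+y_p}(\cA_0)$, whence injectivity of $Obs$ forces $[\eta_0]=0$, and $\eta_0$ is then a coboundary absorbed by $G$. To establish the claim, I will extract from the full $A_\infty$ Maurer--Cartan equation for $\cA$ (which is genuinely satisfied, since $\cA$ exists as an actual curved $A_\infty$ category rather than merely a formal deformation problem) a relation that identifies the $y_p$-graded piece at order $u + e_p$, for $u$ with $y(u) = 0$ contributing to $\eta_0$, with a combination containing $[a_p,\eta_0]$ and a Hochschild coboundary; invoking \textbf{(Nice $3_p$)} ensures the relevant order $u + e_p$ lies in $\NE_A$. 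The Jacobi identity for the Gerstenhaber bracket, together with the earlier-order identifications already achieved by induction, then yields vanishing of $[a_p,[\eta_0]]$ in cohomology.

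With all obstructions killed, I set $\Psi^*_{k+1}$ and $F_{k+1}$ to be the adjusted $\Phi^*$ and $G$, completing the inductive step, and take the inverse limit. The resulting $F$ reduces to $F_0$ modulo $\fm_A$ and is therefore a curved filtered quasi-equivalence. The main obstacle is the degree-$0$ step: it is precisely the ingredient that replaces the signed group-action argument of \cite{sheridan2021homological}, which would otherwise force denominators into the coefficient ring, and whose verification in our geometric setting is carried out by Lemmas \ref{lem:hh2} and \ref{lem:obst}.
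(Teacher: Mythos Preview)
Your overall strategy matches the paper's: build $F$ and $\Psi^*$ order by order, show the defect $\delta(F)$ is closed at each step, and kill its cohomology class according to whether the $Y$-degree is $y_p$, generic, or $0$, using hypotheses (1), (3), (2) respectively. You also correctly identify the $y=0$ step as the heart of the matter and that one must show $Obs_p([\eta_0])=0$.

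However, there is a genuine gap in how you run the induction. The paper explicitly warns that \emph{``it does not quite work to construct $\psi_{p,u}$ and $F_u$ order-by-order with respect to the $\fm$-adic filtration''}, and introduces a bespoke partial order on $\NE_A$ (Lemma~\ref{lem:porder}) instead. The reason is exactly your degree-$0$ step. To prove $Obs_p([\eta_0])=0$ one must extract, for each $u$ with $y(u)=0$, the $\nov^{u+e_p}$-coefficient of the identity $\eps(\delta(F))=0$. This yields
\[
\sum_{v+w=u+e_p}\eps_v(\delta(F)_w)=0,
\]
and one needs every term with $v\notin\{0,e_p\}$ to vanish by the inductive hypothesis. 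With the $\fm$-adic filtration this is not guaranteed: the quotient $\fm_A^{k+1}/\fm_A^{k+2}$ is not spanned by monomials in any canonical way, and multiplication by $\nov_p\colon (\fm_A^{k+1}/\fm_A^{k+2})_{0}\to(\fm_A^{k+2}/\fm_A^{k+3})_{-y_p}$ can have a kernel (whenever $\fm\text{-ord}(u+e_p)>\fm\text{-ord}(u)+1$), so exactness of $\eps_1(\eta_0)$ in $\fm_A^{k+2}/\fm_A^{k+3}$ does not force $[a_p,[\eta_0]]=0$. The paper's partial order is constructed precisely so that $v+w=u+e_p$ with $v\neq 0$ implies $w\le u$, making the induction go through monomial by monomial.

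Two smaller points. First, the identity you need is not ``the Maurer--Cartan equation for $\cA$'' but rather $\eps(\delta(F))=0$ (Lemma~\ref{lem:epsdel}), which uses the $A_\infty$ relations on \emph{both} $\cA$ and $\Psi^*\cB$ together with the pre-functor $F$; the Jacobi identity plays no role. Second, \textbf{(Nice~$3_p$)} is not what ensures $u+e_p\in\NE_A$ (that is just closure of the monoid via \textbf{(Nice~$1_p$)}); rather, \textbf{(Nice~$3_p$)} is used in Step~1 to guarantee $u-e_p\in\NE_A$ so that one can adjust $\psi_{p,u-e_p}$, and, crucially, in the construction of the partial order itself.
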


\begin{rmk}\label{rmk:equiv}
    One could generalize Proposition \ref{prop:vers} by working equivariantly with respect to a (signed) group action, as was done in \cite{Sheridan2017}. However this is unnecessary for the application in this paper, and complicates all statements and proofs, so we have not done it.
\end{rmk}

The proof of \Cref{prop:vers} occupies the remainder of this appendix.

\subsection{Pre-functors}

We will construct the functor $F$ from Proposition \ref{prop:vers} by starting with a map $F$ which is not a functor, then iteratively `correcting' it so that it satisfies the $A_\infty$ functor equation to successively higher orders in the $\fm$-adic filtration. 
Thus it is necessary to develop a little bit of theory for maps which are not $A_\infty$ functors, which we call `pre-functors'.

A graded filtered $R$-linear pre-$A_\infty$ category\footnote{Note that a pre-$A_\infty$ category is not the same thing as an $A_\infty$-pre-category in the sense, e.g., of \cite{Kontsevich-Soibelman-syz}.} $\cC$ consists of a set of objects, together with a graded $R$-module $\hom(X,Y)$ for each pair of objects $X$, $Y$. We assume that the $\fm$-adic filtration on each $\hom$-space is complete (we don't need the assumption of topological freeness yet).

Given two such pre-$A_\infty$ categories $\cC$, $\cD$, and maps $F_0,F_1:\ob \cC \to \ob \cD$, we define
\begin{align*}CC^\bullet&(\cC,(F_0 \otimes F_1)^*\cD) \\&:= \prod_{X_0,\ldots,X_s} \Hom\left(\hom(X_0,X_1)[1] \otimes \ldots \otimes \hom(X_{s-1},X_s)[1],\hom(F_0X_0,F_1X_s)[1]\right)[-1].
\end{align*}
If $F_0=F_1=F$, then we replace `$(F_0 \otimes F_1)^*$' with `$F^*$' in the notation; and if $F=\id$, we write $CC^\bullet(\cC) := CC^\bullet(\cC,\id^*\cC)$.

We define a \emph{pre-functor} from $\cC$ to $\cD$ to be a map on objects $F: \ob \cC \to \ob \cD$, together with an element $F \in CC^1(\cC,F^*\cD)$, whose length-zero component $F^0$ lies in $\fm \cdot \cD(FX,FX)$ for all $X$.
We define a pre-$A_\infty$ category $\mfun(\cC,\cD)$, whose objects are pre-functors from $\cC$ to $\cD$, with 
$$\hom_{\mfun(\cC,\cD)}(F_0,F_1) := CC^\bullet(\cC,(F_0 \otimes F_1)^*\cD).$$
Note that a pre-functor $F$ can also be considered as an element of $\hom_{\mfun(\mathcal C, \mathcal D)}(F,F)$; we will sometimes do so implicitly, trusting that it will be clear from the context when we are doing so.

Given pre-$A_\infty$ categories $\cC_1,\cC_2,\cC_3$, maps $G_0,G_1: \ob \cC_2 \to \ob \cC_3$ together with $\psi \in CC^\bullet(\cC_2,(G_0\otimes G_1)^*\cC_3)$, and pre-functors $F_0,F_1,\ldots,F_k \in \mfun(\cC_1,\cC_2)$ together with $\phi_i \in \hom_{\mfun}(F_{i-1},F_i)$ for $i=1,\ldots,k$, we define 
$$\psi\{\phi_1,\ldots,\phi_k\} \in CC^\bullet\left(\cC_1,(G_0 \circ F_0 \otimes G_1 \circ F_k)^*\cC_3\right)$$
by
\begin{multline}\label{eq:brace}\psi\{\phi_1,\ldots,\phi_k\}(c_1,\ldots,c_s) := 
\sum (-1)^\dagger \psi\left(F_0^*(c_1,\ldots,c_{s_0^1}),\ldots,F_0^*(\ldots,c_{s_0^{j_0}}),\phi_1^*(\ldots,c_{t_1}),\right.\\
\left.F_1^*(\ldots),\ldots \ldots,F_{k-1}^*(\ldots,c_{s_{k-1}^{j_{k-1}}}),\phi_k^*(\ldots,c_{t_k}),F_k^*(\ldots),\ldots,F_k^*(\ldots,c_{s_k^{j_k}})\right)
\end{multline}
where the sum is over all $j_0,\ldots,j_k \ge 0$ and all 
$$ s_0^1 \le s_0^2 \le \ldots \le s_0^{j_0} \le t_1 \le s_1^1 \le \ldots \le s_1^{j_1} \le t_2 \le \ldots \le t_k \le s_k^1 \le \ldots \le s_k^{j_k} = s;$$
and the sign is 
$$\dagger = \sum_{i=1}^k (|\phi_i|-1) \cdot \left(\sum_{\ell=1}^{s_{i-1}^{j_{i-1}}} |c_\ell|'\right).$$
Here $|c_i|'$ denotes the degree of $c_i$ in the shifted complex $\hom(X_{i-1},X_i)[1]$.
We have
$$\left|\psi\{\phi_1,\ldots,\phi_k\}\right| = |\psi| + \sum_{i=1}^k |\phi_i| - k.$$
Note that the sum \eqref{eq:brace} is potentially infinite, but our assumptions that the length-zero components $F^0_i$ are of order $\fm$, and the filtrations are complete, ensure that it converges. 
We have chosen to omit the $F_i$ and $G_i$ from the notation to avoid clutter, as it will be clear from the context what they are. 
However, we make an exception in the case $k=0$ when there is only one pre-functor $F_0$: in this case we will write $\psi\{\}_{F_0}$. 

If $F:\cC_1 \to \cC_2$ and $G: \cC_2 \to \cC_3$ are pre-functors, then we define the pre-functor $G \circ F: \cC_1 \to \cC_3$ to be given by composition on the level of objects, together with the class $G\{\}_F \in CC^1(\cC_1,(G \circ F)^* \cC_3)$. In the case that $G_0$ and $G_1$ are endowed with the structure of pre-functors, we regard $\psi\{\phi_1,\ldots,\phi_k\}$ as an element of $\hom_{\mfun}(G_0 \circ F_0,G_1 \circ F_k)$.

In the case that all $G_i$ and $F_i$ are the identity, we recover the usual brace operations on $CC^\bullet(\cC)$ (see, e.g., \cite{Getzler_GM}). 

Observe that a (filtered, curved, graded) $A_\infty$ structure on the pre-$A_\infty$ category $\cC$ is a class $\mu \in \hom^2_{\mfun(\cC,\cC)}(\id,\id)$ satisfying $\mu\{\mu\} = 0$, and whose length-zero component $\mu^0_X$ lives in $\fm \cdot \hom(X,X)$, for all $X$. 

Given two $A_\infty$ categories $\cC$ and $\cD$, a (non-unital, filtered, curved) functor $F:\cC \to \cD$ is a pre-functor which satisfies the $A_\infty$ functor equation $\delta(F) = 0$, where
$$\delta(F) := \mu_\cD\{\}_F - F\{\mu_\cC\}.$$
Let $\nufun(\cC,\cD) \subset \mfun(\cC,\cD)$ be the full sub-pre-$A_\infty$ category whose objects are the functors. 
We define an (uncurved) $A_\infty$ structure on $\nufun(\cC,\cD)$ by
$$\mu^k_{\nufun(\cC,\cD)}(\alpha_1,\ldots,\alpha_k) := \left\{ \begin{array}{ll}
    0 & k=0,\\
    \mu_\cD\{\alpha_1\} + (-1)^{|\alpha_1|} \alpha_1 \{\mu_\cC\} & k=1,\\
    \mu_\cD\{\alpha_1,\ldots,\alpha_k\} & k>1.
    \end{array}\right.$$
We define the category $\NuFun(\cC,\cD) := H(\nufun(\cC,\cD))$.
				
Note that $h \in \hom^*_{\nufun}(F_0,F_1)$ is closed, and hence defines a morphism $[h] \in \Hom^*_{\NuFun}(F_0,F_1)$, if and only if $\eps(h) = 0$, where
$$\eps(h) := \mu_\cD\{h\} + (-1)^{|h|} h\{\mu_\cC\}.$$
Even if $F_0$ and $F_1$ are merely pre-functors, we may still define $\eps(h)$ by the same formula.

The brace operations satisfy formulae analogous to those satisfied by the analogous operations on the Hochschild complex of a single $A_\infty$ category (see, e.g., \cite{Getzler_GM}), among which we need the following:

\begin{lem}
If $F:\cC_1 \to \cC_2$ is a pre-functor, and $\psi_i,\phi_i \in \hom_{\nufun(\cC_i,\cC_i)}(\id,\id)$ for $i=1,2$, then:
\begin{align}
\label{eq:brace1}\psi_2\{\phi_2\}\{\}_F &= \psi_2 \{\phi_2\{\}_F\};\\
\label{eq:brace2}\psi_2\{\}_F\{\phi_1\} &= \psi_2\{F\{\phi_1\}\};\\
\label{eq:brace3}F\{\psi_1\}\{\phi_1\} &= (-1)^{(|\psi|-1)\cdot(|\phi|-1)} F\{\phi_1,\psi_1\} + F\{\psi_1\{\phi_1\}\}  + F\{\psi_1,\phi_1\}.
\end{align}
If $G:\cC_1 \to \cC_2$ is another pre-functor, and $h\in \hom_{\mfun}(F,G)$, then furthermore:
\begin{align}
\label{eq:brace4}\psi_2\{\phi_2\}\{h\} &= (-1)^{(|h|-1)\cdot(|\phi_2|-1)}\psi_2\{h,\phi_2\{\}_G\} + \psi_2\{\phi_2\{h\}\} + \psi_2\{\phi_2\{\}_F,h\};\\
\label{eq:brace5}\psi_2\{h\}\{\phi_1\} &= (-1)^{(|h|-1)\cdot(|\phi_1|-1)} \psi_2\{F\{\phi_1\},h\} + \psi_2\{h\{\phi_1\}\} +  \psi_2\{h,G\{\phi_1\}\};\\
\label{eq:brace6}h\{\psi_1\}\{\phi_1\} &= (-1)^{(|\psi_1|-1)\cdot(|\phi_1|-1)}h\{\phi_1,\psi_1\} + h\{\psi_1\{\phi_1\}\} + h\{\psi_1,\phi_1\}.
\end{align}

\end{lem}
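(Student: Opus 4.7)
The plan is to verify each identity by direct expansion using the definition \eqref{eq:brace} of the brace operations, establishing a bijection of terms with matching Koszul signs. A brace $\alpha\{\beta_1,\dots,\beta_j\}$ evaluated on inputs $(c_1,\dots,c_s)$ is a sum indexed by the choice of ``slots'' into which each $\beta_i$ receives its arguments, with the remaining inputs partitioned into contiguous blocks processed by the pre-functor components $F_i^*$. A twofold iterated brace $\alpha\{\beta\}\{\gamma\}$ is therefore indexed by a placement of the $\gamma$-slot relative to the $\beta$-slot, and the proof proceeds in each case by classifying these relative placements.

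Identities \eqref{eq:brace1} and \eqref{eq:brace2} involve only one non-$F$ insertion on either side, so the slot configurations on the two sides are in obvious bijection. No two shifted arguments need to be commuted past each other, so the signs match automatically. These identities are essentially tautological once one unpacks \eqref{eq:brace}.

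Identity \eqref{eq:brace3}, and symmetrically \eqref{eq:brace6}, is the first instance where the relative placement creates multiple terms. Expanding $F\{\psi_1\}\{\phi_1\}$, one sorts configurations by the relative position of the $\phi_1$-slot and the $\psi_1$-slot into three mutually exclusive cases: (i) the $\phi_1$-slot lies strictly to the left of the $\psi_1$-slot among the arguments of the outer pre-functor $F$, giving $F\{\phi_1,\psi_1\}$ with Koszul sign $(-1)^{(|\psi_1|-1)(|\phi_1|-1)}$ coming from commuting the two shifted insertions into the canonical left-to-right order used by $F\{\phi_1,\psi_1\}$; (ii) the $\phi_1$-slot is nested inside the $\psi_1$-slot, in which case its arguments feed directly into $\psi_1\{\phi_1\}$, contributing $F\{\psi_1\{\phi_1\}\}$; (iii) the $\phi_1$-slot lies strictly to the right of the $\psi_1$-slot, contributing $F\{\psi_1,\phi_1\}$. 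Identity \eqref{eq:brace6} is proved by the same trichotomy, with $h$ in the role formerly played by $F$.

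Identities \eqref{eq:brace4} and \eqref{eq:brace5} follow the same pattern, with $h \in \hom_{\mfun}(F,G)$ acting as a single ``marked'' insertion that splits the input sequence into a prefix whose blocks are processed by $F^*$ and a suffix whose blocks are processed by $G^*$. One classifies configurations according to whether $h$ lies to the left of, nested inside, or to the right of the $\phi_2$-slot (respectively the $\phi_1$-slot), and the three cases yield the three summands appearing on each right-hand side; the prefactor $\phi_2\{\}_G$ versus $\phi_2\{\}_F$ in \eqref{eq:brace4} tracks which category of pre-functor blocks surround $\phi_2$. The main obstacle is the sign bookkeeping: one must carefully track the cumulative shifted degree $\sum_{\ell \le s_{i-1}^{j_{i-1}}} |c_\ell|'$ appearing in the exponent $\dagger$ from \eqref{eq:brace} and confirm that commuting two shifted insertions produces exactly the displayed Koszul sign. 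This is a routine adaptation of the classical Gerstenhaber brace identities on the Hochschild complex of a single $A_\infty$ category (cf.\ \cite{Getzler_GM}); the presence of pre-functors between distinct categories contributes no extra signs, because the pre-functor components $F_i^*$ are themselves unshifted, of degree zero as brace insertions.
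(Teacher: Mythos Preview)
The paper gives no proof of this lemma at all: it is stated and immediately followed by \qed, the authors treating these identities as routine verifications analogous to the classical brace identities on the Hochschild complex (as they remark just before the lemma, referencing \cite{Getzler_GM}). Your sketch is correct and supplies exactly the direct expansion-and-trichotomy argument the authors omit; in particular, your classification of configurations by whether the inner slot lies to the left of, nested inside, or to the right of the outer slot is the standard mechanism, and your observation that the pre-functor components contribute no extra Koszul signs (since $|F|-1=0$) is the reason the computation goes through unchanged in the multi-category setting.
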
 
\qed

\begin{lem}\label{lem:epsdel}
If $F:\cC_1 \to \cC_2$ is a pre-functor between $A_\infty$ categories, then $\eps(\delta(F)) = 0$.
\end{lem}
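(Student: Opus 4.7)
The plan is to expand $\eps(\delta(F))$ directly using the definitions and then apply the brace identities (\ref{eq:brace1})--(\ref{eq:brace3}), exploiting the $A_\infty$ relations $\mu_{\cC_i}\{\mu_{\cC_i}\} = 0$ to obtain cancellations. This is a purely formal manipulation; the substantive content is already packaged into those identities.

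First, I would note that $\delta(F)$ has degree $2$, so $(-1)^{|\delta(F)|} = 1$, and hence
\[
\eps(\delta(F)) = \mu_{\cC_2}\{\mu_{\cC_2}\{\}_F\} \;-\; \mu_{\cC_2}\{F\{\mu_{\cC_1}\}\} \;+\; \mu_{\cC_2}\{\}_F\{\mu_{\cC_1}\} \;-\; F\{\mu_{\cC_1}\}\{\mu_{\cC_1}\}.
\]
For the first term, identity (\ref{eq:brace1}) with $\psi_2 = \phi_2 = \mu_{\cC_2}$ gives $\mu_{\cC_2}\{\mu_{\cC_2}\{\}_F\} = \mu_{\cC_2}\{\mu_{\cC_2}\}\{\}_F$, which vanishes because $\mu_{\cC_2}\{\mu_{\cC_2}\}=0$. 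For the middle two terms, identity (\ref{eq:brace2}) with $\psi_2 = \mu_{\cC_2}$ and $\phi_1 = \mu_{\cC_1}$ yields $\mu_{\cC_2}\{\}_F\{\mu_{\cC_1}\} = \mu_{\cC_2}\{F\{\mu_{\cC_1}\}\}$, so these two terms cancel.

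For the last term, identity (\ref{eq:brace3}) with $\psi_1 = \phi_1 = \mu_{\cC_1}$ (both of shifted degree $1$) yields
\[
F\{\mu_{\cC_1}\}\{\mu_{\cC_1}\} = -\,F\{\mu_{\cC_1},\mu_{\cC_1}\} + F\{\mu_{\cC_1}\{\mu_{\cC_1}\}\} + F\{\mu_{\cC_1},\mu_{\cC_1}\},
\]
which vanishes since the two length-two brace terms cancel and $\mu_{\cC_1}\{\mu_{\cC_1}\}=0$. Summing the four contributions gives $\eps(\delta(F))=0$.

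The only potential obstacle is bookkeeping of Koszul signs in the three invocations of (\ref{eq:brace1})--(\ref{eq:brace3}); here the relevant shifted degrees are $|\mu_{\cC_i}|-1 = 1$ and $|F|-1 = 0$, so the signs simplify as above and no subtle sign conflict arises. Convergence of the infinite sums implicit in the brace operations is guaranteed by the assumption that $F^0 \in \fm \cdot \hom$ and that the $\fm$-adic filtrations are complete, so all manipulations take place inside the complete filtered complex.
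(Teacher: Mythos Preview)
Your proof is correct and essentially identical to the paper's: both expand $\eps(\delta(F))$ into four terms, apply identities (\ref{eq:brace1})--(\ref{eq:brace3}) to each, and use $\mu_{\cC_i}\{\mu_{\cC_i}\}=0$ to obtain the cancellations. The only cosmetic difference is that the paper rewrites the second term via (\ref{eq:brace2}) to cancel with the third, whereas you rewrite the third to cancel with the second; the sign bookkeeping is handled identically.
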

\begin{proof}
Applying \eqref{eq:brace1}--\eqref{eq:brace3}, and observing that $|\delta(F)|=|\mu_i|=2$, we have:
\begin{align*}
\eps(\delta(F)) &= \mu_2\{\mu_1\{\}_F\} - \mu_2\{F\{\mu_1\}\} + \mu_2\{\}_F\{\mu_1\} - F\{\mu_1\}\{\mu_1\} \\
&= \mu_2\{\mu_1\}\{\}_F - \mu_2\{\}_F\{\mu_1\} + \mu_2\{\}_F\{\mu_1\} - F\{\mu_1,\mu_1\} + F\{\mu_1\{\mu_1\}\} + F\{\mu_1,\mu_1\} \\
& = 0,
\end{align*}
where in the last step we also used the $A_\infty$ relations $\mu_i\{\mu_i\} = 0$.
\end{proof}

\begin{lem}\label{lem:epseps}
If $F,G:\cC_1 \to \cC_2$ are pre-functors between $A_\infty$ categories, and $h\in \hom_{\mfun}(F,G)$, then 
$$\eps(\eps(h)) = (-1)^{|h|}\mu_2\{h,\delta(G)\} - \mu_2\{\delta(F),h\}.$$
\end{lem}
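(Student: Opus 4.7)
The plan is to expand $\eps(\eps(h))$ directly from the definition, observe that $|\eps(h)| = |h|+1$, and then simplify each of the four resulting terms using the brace identities \eqref{eq:brace4}--\eqref{eq:brace6} together with the $A_\infty$ associativity relations $\mu_i\{\mu_i\}=0$.

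Concretely, writing $\mu_2 = \mu_{\cC_2}$ and $\mu_1 = \mu_{\cC_1}$, the expansion reads
\begin{align*}
\eps(\eps(h)) &= \mu_2\{\mu_2\{h\}\} + (-1)^{|h|}\mu_2\{h\{\mu_1\}\} \\
&\quad + (-1)^{|h|+1}\bigl(\mu_2\{h\}\bigr)\{\mu_1\} - \bigl(h\{\mu_1\}\bigr)\{\mu_1\},
\end{align*}
using $(-1)^{2|h|+1}=-1$ on the last term. I would then treat each piece in turn. For the first term, applying \eqref{eq:brace4} with $\psi_2=\phi_2=\mu_2$ and using $\mu_2\{\mu_2\}=0$ gives
\[
\mu_2\{\mu_2\{h\}\} = (-1)^{|h|}\mu_2\{h,\mu_2\{\}_G\} - \mu_2\{\mu_2\{\}_F,h\}.
\]
For the third term, \eqref{eq:brace5} with $\psi_2=\mu_2,\phi_1=\mu_1$ gives
\[
\mu_2\{h\}\{\mu_1\} = (-1)^{|h|-1}\mu_2\{F\{\mu_1\},h\} + \mu_2\{h\{\mu_1\}\} + \mu_2\{h,G\{\mu_1\}\}.
\]
For the fourth term, \eqref{eq:brace6} with $\psi_1=\phi_1=\mu_1$ yields
\[
h\{\mu_1\}\{\mu_1\} = -h\{\mu_1,\mu_1\} + h\{\mu_1\{\mu_1\}\} + h\{\mu_1,\mu_1\} = 0
\]
after invoking $\mu_1\{\mu_1\}=0$.

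Substituting these three identities back, the two occurrences of $\mu_2\{h\{\mu_1\}\}$ cancel (one from the expansion, one from \eqref{eq:brace5} multiplied by $(-1)^{|h|+1}$), and regrouping the remaining four terms gives
\[
\eps(\eps(h)) = (-1)^{|h|}\mu_2\bigl\{h,\; \mu_2\{\}_G - G\{\mu_1\}\bigr\} - \mu_2\bigl\{\mu_2\{\}_F - F\{\mu_1\},\; h\bigr\},
\]
which is exactly $(-1)^{|h|}\mu_2\{h,\delta(G)\} - \mu_2\{\delta(F),h\}$ by the definition of $\delta$. The main obstacle is purely bookkeeping: tracking the Koszul signs produced by the degree-$|h|$ input in the brace relations and ensuring the middle $\mu_2\{h\{\mu_1\}\}$ contributions cancel with the correct sign. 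The proof is essentially a more involved cousin of \Cref{lem:epsdel}, with \eqref{eq:brace4}--\eqref{eq:brace6} playing the role that \eqref{eq:brace1}--\eqref{eq:brace3} played there.
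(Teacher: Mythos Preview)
Your proof is correct and follows essentially the same approach as the paper's: expand $\eps(\eps(h))$, apply the brace identities \eqref{eq:brace4}--\eqref{eq:brace6} together with $\mu_i\{\mu_i\}=0$, and collect terms. The only cosmetic difference is that the paper applies \eqref{eq:brace5} to the term $(-1)^{|h|}\mu_2\{h\{\mu_1\}\}$ (so that the resulting $\mu_2\{h\}\{\mu_1\}$ cancels the untouched third term), whereas you apply it to the third term $(-1)^{|h|+1}\mu_2\{h\}\{\mu_1\}$ (so that the resulting $\mu_2\{h\{\mu_1\}\}$ cancels the untouched second term); the two are equivalent rearrangements of the same identity.
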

\begin{proof}
We have
$$ \eps(\eps(h)) = \mu_2\{\mu_2\{h\}\} + (-1)^{|h|} \mu_2\{h\{\mu_1\}\} + (-1)^{|h|-1} \mu_2\{h\}\{\mu_1\} - h\{\mu_1\}\{\mu_1\}.$$
Applying \eqref{eq:brace4}--\eqref{eq:brace6}, and using that $|\mu_i| = 2$, we find that this is equal to the expression
\begin{multline}
\left(\mu_2\{\mu_2\}\{h\} + (-1)^{|h|}\mu_2\{h,\mu_2\{\}_G\} - \mu_2\{\mu_2\{\}_F,h\}\right) \\
+ (-1)^{|h|}\left(\mu_2\{h\}\{\mu_1\} + (-1)^{|h|}\mu_2\{F\{\mu_1\},h\} - \mu_2\{h,G\{\mu_1\}\}\right)\\
+ (-1)^{|h|-1}\left(\mu_2\{h\}\{\mu_1\}\right) + \left(h\{\mu_1,\mu_1\} - h\{\mu_1\{\mu_1\}\} - h\{\mu_1,\mu_1\}\right).
\end{multline}
Grouping and cancelling terms (again using the $A_\infty$ relations $\mu_i\{\mu_i\} = 0$) gives the result.
\end{proof}

\subsection{Composition}

We review the composition of (pre-)functors, following \cite[Section 1e]{seidel2008fukaya}.

Given pre-$A_\infty$ categories $\cC_1,\cC_2,\cC_3$ and a pre-functor $G:\cC_2 \to \cC_3$, we obtain a `left-composition' pre-functor 
$$\cL_G: \mfun(\cC_1,\cC_2) \to \mfun(\cC_1,\cC_3)$$
which on the level of objects sends $F \mapsto G\circ F$, and on the level of morphisms is given by
$$\cL^i_G(h_1,\ldots,h_k) := G\{h_1,\ldots,h_k\}.$$
If the $\cC_i$ are $A_\infty$ categories, and $G$ is an $A_\infty$ functor, then $\cL_G$ defines an $A_\infty$ functor $\nufun(\cC_1,\cC_2) \to \nufun(\cC_1,\cC_3)$.

On the other hand, given a pre-functor $F:\cC_1 \to \cC_2$, we obtain a `right-composition' pre-functor
$$\cR_F:\mfun(\cC_2,\cC_3) \to \mfun(\cC_1,\cC_3)$$
which on the level of objects sends $G \mapsto G \circ F$, and on the level of morphisms is given by
$$\cR^1_F(h) = h\{\}_F,$$
with $\cR^i_F = 0$ for $i \neq 1$. 
The following is straightforward from the definitions: 

\begin{lem}\label{lem:R1eps}
    We have
    $$\cR^1_F(\eps(h)) = \eps(\cR^1_F(h)) + (-1)^{|h|} h\{\delta(F)\}$$
whenever the expression makes sense.
\end{lem}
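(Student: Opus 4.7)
The plan is to expand both sides of the identity using the definitions of $\cR^1_F$, $\eps$, and $\delta$, then reduce the difference to $h\{\delta(F)\}$ by applying brace-composition identities of the same form as \eqref{eq:brace1}--\eqref{eq:brace2}. The computation is formally parallel to those in \Cref{lem:epsdel} and \Cref{lem:epseps}.

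First I would expand $\cR^1_F(\eps(h)) = \eps(h)\{\}_F$ using $\eps(h) = \mu_3\{h\} + (-1)^{|h|} h\{\mu_2\}$, where $\mu_i$ denotes the $A_\infty$ structure on $\cC_i$, obtaining
$$\cR^1_F(\eps(h)) = (\mu_3\{h\})\{\}_F + (-1)^{|h|}(h\{\mu_2\})\{\}_F.$$
The (generalized) identity \eqref{eq:brace1} then lets me rewrite these as $\mu_3\{h\{\}_F\}$ and $h\{\mu_2\{\}_F\}$ respectively. Similarly, I would expand $\eps(\cR^1_F(h)) = \eps(h\{\}_F) = \mu_3\{h\{\}_F\} + (-1)^{|h|}(h\{\}_F)\{\mu_1\}$ and use the (generalized) identity \eqref{eq:brace2} to rewrite $(h\{\}_F)\{\mu_1\} = h\{F\{\mu_1\}\}$.

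Taking the difference, the $\mu_3\{h\{\}_F\}$ terms cancel, and linearity of the brace in its single slot gives
$$\cR^1_F(\eps(h)) - \eps(\cR^1_F(h)) = (-1)^{|h|}\bigl(h\{\mu_2\{\}_F\} - h\{F\{\mu_1\}\}\bigr) = (-1)^{|h|} h\{\mu_2\{\}_F - F\{\mu_1\}\} = (-1)^{|h|} h\{\delta(F)\},$$
by the definition $\delta(F) = \mu_2\{\}_F - F\{\mu_1\}$, which is the claim.

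The main technical point to verify is that the brace identities \eqref{eq:brace1} and \eqref{eq:brace2}, stated there for $\psi_i, \phi_i \in \hom_{\nufun(\cC_i,\cC_i)}(\id,\id)$, continue to hold in the mixed setting where one of the arguments is replaced by $h \in \hom_\mfun(G_0, G_1)$ for pre-functors $G_0, G_1: \cC_2 \to \cC_3$. This follows by a routine sum-over-trees calculation: both sides of each asserted identity unfold via \eqref{eq:brace} to the same sum over ways of distributing consecutive groups of $c$-inputs into slots of $h$, $G_0$, $G_1$, $F$, and the cochain being inserted, and since the outermost operation $\{\}_F$ contributes no extra sign, the sign conventions agree. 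Such generalizations are already implicit in the mixed identities \eqref{eq:brace4}--\eqref{eq:brace6} and in the proof of \Cref{lem:epseps}, so no new ideas are required.
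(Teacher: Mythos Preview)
Your proof is correct and follows the same approach as the paper, which simply records that the identity is ``straightforward from the definitions'' without further detail. You have supplied exactly the unfolding the paper omits, and your remark that the needed variants of \eqref{eq:brace1}--\eqref{eq:brace2} (with $h \in \hom_{\mfun}(G_0,G_1)$ in place of an element of $\hom_{\nufun}(\id,\id)$) hold by the same sum-over-partitions argument is accurate.
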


In particular, if the $\cC_i$ are $A_\infty$ categories, and $F$ is an $A_\infty$ functor, then $\cR^1_F$ is a chain map. 
 In fact, it defines an $A_\infty$ functor.

Note that $\cL$ and $\cR$ can be extended to a bifunctor $\nufun(\cC_1,\cC_2) \times \nufun(\cC_2,\cC_3) \to \nufun(\cC_1,\cC_3)$ \cite{Lyubashenko_multi}, but we will not use this.

\subsection{Proof of Proposition \ref{prop:vers}}

The setting for the proof of Proposition \ref{prop:vers} is the cochain complex $C_0 := \hom_{\nufun(\cB_0,\cA_0)}(F_0,F_0)$. 

\begin{lem}\label{lem:LF_RF}
    The natural maps
    \begin{equation}\label{eq:LF_RF}
 CC(\cB_0) = \hom_{\nufun(\cB_0,\cB_0)}(\id,\id) \xrightarrow{\cL^1_{F_0}} C_0 \xleftarrow{\cR^1_{F_0}} \hom_{\nufun(\cA_0,\cA_0)}(\id,\id) = CC(\cA_0)
 \end{equation}
 are quasi-isomorphisms.
\end{lem}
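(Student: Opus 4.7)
The plan is to prove both maps are quasi-isomorphisms by the standard length-filtration argument, reducing the problem to a statement about the underlying chain map $F_0^1$ on hom-spaces, for which the quasi-equivalence hypothesis provides exactly what is needed.

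First I would equip each of $CC^*(\cB_0)$, $C_0$, and $CC^*(\cA_0)$ with the length filtration $F^s = \{\text{cochains of length } \ge s\}$. Since each complex is defined as a product over length, this filtration is complete. Only the $\mu^1$-terms preserve length in the Hochschild differential, so the associated graded complex at length $s$ has differential induced purely by $\mu^1$. Unpacking the brace formulas \eqref{eq:brace}, both $\cL^1_{F_0}(\phi) = F_0\{\phi\}$ and $\cR^1_{F_0}(\phi) = \phi\{\}_{F_0}$ preserve the filtration: the unique contribution at exact length $s$ uses only $F_0^1$. The associated graded of $\cL^1_{F_0}$ at length $s$ is post-composition $\phi \mapsto F_0^1 \circ \phi$, while that of $\cR^1_{F_0}$ is the restriction-and-precomposition $\phi_{Y_\bullet} \mapsto \phi_{F_0 X_\bullet} \circ (F_0^1)^{\otimes s}$.

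Next I would verify that these associated graded maps are quasi-isomorphisms. By topological freeness (\Cref{def:top_free_mod}), each $\hom_{\cB_0}(X_i,X_{i+1})$ and $\hom_{\cA_0}(Y_i,Y_{i+1})$ is free over $\Bbbk$; hence $\Hom_\Bbbk(V,-)$ preserves quasi-isomorphisms and $\Hom_\Bbbk(-,W)$ preserves quasi-isomorphisms of free-$\Bbbk$-module sources. Since $F_0$ is a quasi-equivalence, $F_0^1:\hom_{\cB_0}(X,Y) \to \hom_{\cA_0}(F_0 X,F_0 Y)$ is a quasi-isomorphism of chain complexes for every pair $X,Y$, so post-composition with $F_0^1$ (for $\cL^1_{F_0}$) and pre-composition with $(F_0^1)^{\otimes s}$ (part of $\cR^1_{F_0}$) are both quasi-isomorphisms on the associated graded.

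Finally, a spectral sequence comparison argument (invoking \cite[Theorem 5.5.11]{Weibel-HA} together with completeness of the filtrations) concludes that $\cL^1_{F_0}$ and $\cR^1_{F_0}$ themselves are quasi-isomorphisms. The main obstacle is the essential-surjectivity aspect hidden inside $\cR^1_{F_0}$: the source is indexed by tuples $Y_\bullet \in \ob(\cA_0)^{s+1}$ while the target is indexed by $X_\bullet \in \ob(\cB_0)^{s+1}$, so the restriction step requires that every tuple $Y_\bullet$ be ``reachable'' from the image of $F_0$ up to quasi-isomorphism. This is precisely Morita invariance of Hochschild cohomology under quasi-equivalences; I would handle it by replacing $\cA_0$ with the full subcategory on the image of $F_0$, observing that $CC^*$ of the ambient category quasi-isomorphically maps to that of any full subcategory containing representatives of every quasi-isomorphism class of objects, and then combining with the pre-composition step above. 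Over a general regular base $\Bbbk$ rather than a field, the cleanest alternative (which I would adopt if the direct argument becomes delicate) is to recognize $C_0 \simeq CC^*(\cB_0, F_0^*\cA_0)$, interpret $\cL^1_{F_0}$ as coming from the bimodule quasi-isomorphism $\cB_0 \to F_0^*\cA_0$ induced by $F_0^1$, and $\cR^1_{F_0}$ as the Morita invariance map, and invoke bar-resolution arguments compatible with the free/projective structure guaranteed by topological freeness.
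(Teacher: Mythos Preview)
Your approach via the length-filtration spectral sequence is the same as the paper's, and the reduction to showing that the associated graded maps are quasi-isomorphisms is correct. However, your justification of that last step has a real gap. The inference ``each hom-space is free over $\Bbbk$, hence $\Hom_\Bbbk(V,-)$ preserves quasi-isomorphisms'' is not valid over an arbitrary commutative ring: what you need is that the \emph{complex} $V=\cB_0(X_0,\ldots,X_s)$ (with its $\mu^1$-differential) be K-projective, and a complex which is merely termwise free need not be (e.g.\ the acyclic free complex $\cdots \xrightarrow{t} \Bbbk \xrightarrow{t} \Bbbk \xrightarrow{t} \cdots$ over $\Bbbk=k[t]/(t^2)$ is not contractible). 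The paper closes this by using the standing hypothesis that $\Bbbk$ is \emph{regular}: over a regular ring every complex of projective modules is K-projective \cite[Proposition 4.1(b)]{Positselski2021}. The same point is needed for your claims that $(F_0^1)^{\otimes s}$ is a quasi-isomorphism (K-projective $\Rightarrow$ K-flat, \cite[Proposition 5.8]{Spaltenstein88}) and that $\Hom_\Bbbk(-,W)$ sends this quasi-isomorphism to a quasi-isomorphism (the cone is acyclic and K-projective, hence contractible). You should invoke regularity explicitly rather than just freeness.

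Your concern about essential surjectivity in the $\cR^1_{F_0}$ argument is legitimate at the stated level of generality, and your proposed fix via Morita invariance is reasonable; the paper's proof does not address this either, but in the application both $\cA_0$ and $\cB_0$ are $A_\infty$ algebras (one object each), so the issue does not arise there.
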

\begin{proof} 
    We follow the argument from \cite[Lemma 1.7]{seidel2008fukaya} using the spectral sequences arising from the length filtration on Hochschild cohomology. For any collection of objections $X_0,X_1,\cdots,X_s$ in $\ob\cB_0$, we use the shorthand $$ \cB_0(X_0,X_1,\cdots, X_s):=\operatorname{hom}_{\cB_{0}}(X_0,X_1)[1]\otimes \operatorname{hom}_{\cB_{0}}(X_1,X_2)[1]\otimes \cdots \otimes  \operatorname{hom}_{\cB_{0}}(X_{s-1},X_s)[1] $$
 where the differential on the right-hand side is given by the natural tensor product of complexes. We define $\cA_0(F_0X_0,\ldots,F_0X_s)$ similarly.    
    We begin by proving that $\cL^1_{F_0}$ is a quasi-isomorphism. The length filtration on $C_0$ and $CC(\cB_0)$ gives rise to two spectral sequences $E_r^{pq}(C_0)$ and $E_r^{pq}(CC(\cB_0)).$ The map $\cL^1_{F_0}$ induces a map between these spectral sequences which on the first page is given by the direct product of natural maps: 
\begin{align} \label{eq:mapE1} H^*(\operatorname{Hom}_\Bbbk(\cB_0(X_0,X_1,\cdots, X_s), \cB_0(X_0,X_s))) \to  H^*(\operatorname{Hom}_\Bbbk(\cB_0(X_0,X_1,\cdots, X_s), \cA_{0}(F_0X_0,F_0X_s))) \end{align}  

Because $\Bbbk$ is a regular commutative ring, any complex of projective modules over $\Bbbk$ is K-projective in the sense of \cite{Spaltenstein88} (see \cite[Proposition 4.1(b)]{Positselski2021}). In particular, $\cB_0(X_0,X_1,\cdots, X_s)$ is K-projective, and hence the map \eqref{eq:mapE1} is an isomorphism (apply the definition of K-projectivity to the cone of $F_0:\cB_0(X_0,X_s) \to \cA_0(F_0X_0,F_0X_s)$). From this, we deduce that $\cL^1_{F_0}$ is a quasi-isomorphism as claimed. 

To see that $\cR^1_{F_0}$ is a quasi-isomorphism, note that the maps $$ \cB_0(X_0,X_1,\cdots, X_s) \to \cA_0(F_0X_0,F_0X_1,\cdots, F_0X_s)$$ remain quasi-isomorphisms because K-projective complexes are K-flat by \cite[Proposition 5.8]{Spaltenstein88}. The mapping cone is therefore an acyclic K-projective complex which is therefore contractible (see \cite[Section 1.1]{Spaltenstein88}). It follows that the map induced by $\cR^1_{F_0}$ on the first pages of the length spectral sequence is also an isomorphism, which proves the claim. 
\end{proof}

We choose a graded isomorphism of pre-categories, $\cB \simeq \cB_0 \hat{\otimes} R_B$, and similarly for $\cA$. 
These exist by our assumption that the morphism spaces are topologically free. 
We henceforth implicitly identify $\cB$ with $\cB_0 \hat{\otimes} R_B$, and $\cA$ with $\cA_0 \hat{\otimes} R_A$, via these isomorphisms. 
This allows us to take the `Taylor expansion' of all of the maps that concern us. 
For example, we may expand the $A_\infty$ structure map $\mu_\cB$ as 
\begin{align*} 
\mu_\cB &= \sum_{u \in \NE_B} \mu_{\cB,u} \cdot \nov^u, \quad \text{where}\\
\mu_{\cB,u} & \in CC^{2+y(u)}(\cB_0).
\end{align*}
We may similarly expand $\mu_\cA$, with Taylor coefficients $\mu_{\cA,u} \in CC^{2+y(u)}(\cA_0)$ for $u \in \NE_A$.

In order to prove Proposition \ref{prop:vers} we will construct:
\begin{itemize}
\item units $\psi_p \in R_A$ of degree $0$, from which we define $\Psi^*$ by setting $\Psi^*(\nov_p) = \nov_p \cdot \psi_p$;
\item $F \in \nufun(\Psi^*\cB,\cA)$ such that $F=F_0$ modulo $\fm$.
\end{itemize}

We will expand 
\begin{align*}
    \psi_p &= \sum_{u \in \NE_A,y(u)=0} \psi_{p,u} \cdot \nov^u, \qquad \text{where $\psi_{p,u} \in \Bbbk$} \\
    F & = \sum_{u \in \NE_A} F_u \cdot \nov^u, \qquad \text{ where $F_u \in C_0^{1+y(u)}$.}
\end{align*} 
Note that the infinite sum defining $F$ converges for any choice of such Taylor coefficients $F_u$, because the $\fm$-adic filtration on each morphism space of $\cA$ is complete by our assumption that it is topologically free. 

We will also expand 
\begin{align*}
    \delta(F) &= \sum_{u \in \NE_A} \delta(F)_u \cdot \nov^u, \qquad \text{where $\delta(F)_u \in C_0^{2+y(u)}$.}
\end{align*}  

Finally, we expand the map 
\begin{align*}
    \eps: hom^*_{\mfun(\Psi^*\cB,\cA)}(F,F) & \to hom^{*+1}_{\mfun(\cB,\Psi^*\cA)}(F,F)\qquad \text{as}\\
    \eps & = \sum_{u \in \NE_A} \eps_u \cdot \nov^u, \quad \text{where}\\
    \eps_u : C_0^* & \to C_0^{*+1+y(u)}.
\end{align*}

\begin{lem}\label{lem:eps_obs}
    Suppose that $\delta(F)_{e_p} = 0$. Then
    \begin{enumerate}
        \item \label{it:eps_p_chain} we have $\eps_{e_p} \eps_0 + \eps_0 \eps_{e_p} = 0$, so that there is a well-defined map
        $$[\eps_{e_p}]: H^*(C_0)  \to H^{*+1+y_p}(C_0);$$
        \item \label{it:eps_p_obs} we have
        $$[\eps_{e_p}] \circ [\cR^1_{F_0}] = [\cR^1_{F_0}] \circ Obs_p.$$
    \end{enumerate}
\end{lem}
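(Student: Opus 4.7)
My plan is to extract both parts from the bracket identities of Lemma \ref{lem:epseps} and Lemma \ref{lem:R1eps} by reading off the coefficient of $\nov^{e_p}$. The decisive combinatorial input will be \textbf{(Nice $2_p$)} for $\NE_A$, which says any decomposition $e_p = u_1 + \cdots + u_k$ with all $u_i \in \NE_A$ has exactly one nonzero summand, necessarily equal to $e_p$. This collapses each Taylor expansion in $\nov$ to only two or three surviving terms.

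For part (\ref{it:eps_p_chain}) I will apply Lemma \ref{lem:epseps} with $F = G$ equal to our pre-functor from $\Psi^*\cB$ to $\cA$, on an input $h \in C_0$ viewed as the constant-in-$\nov$ element of $\hom_\mfun(F,F)$. The left-hand side $\eps(\eps(h)) = \sum_{u,v}\eps_u \eps_v(h)\,\nov^{u+v}$ has $\nov^{e_p}$-coefficient equal to $\eps_0 \eps_{e_p}(h) + \eps_{e_p} \eps_0(h)$ by \textbf{(Nice $2_p$)}. The right-hand side is linear in $\delta(F)$, and its $\nov^{e_p}$-coefficient forces $\delta(F)$ to appear at weight either $0$ or $e_p$; the former vanishes because $F_0$ is an honest $A_\infty$ functor so $\delta(F_0) = 0$, while the latter vanishes by hypothesis. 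This yields $\eps_0 \eps_{e_p} + \eps_{e_p} \eps_0 = 0$, proving (\ref{it:eps_p_chain}).

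For part (\ref{it:eps_p_obs}) I will fix a chain-level cocycle representative $\alpha \in CC^2(\cA_0)$ and regard it as an element of $CC(\cA) = \hom_\mfun(\id_\cA,\id_\cA)$ with no $\nov$-dependence, then apply Lemma \ref{lem:R1eps} to $F$ and to $h = \alpha$. The left-hand side $\eps$ is just $[\mu_\cA,-]$ because $\id_\cA$ is a strict functor, so its $\nov^{e_p}$-coefficient, after applying $\cR^1_F$ and invoking \textbf{(Nice $2_p$)}, will split into a piece with $\mu_\cA$ at order $e_p$, yielding $\cR^1_{F_0}(Obs_p(\alpha))$, and a piece involving $[\mu_{\cA,0},\alpha]$, which vanishes since $\alpha$ is an $\cA_0$-cocycle. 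On the right-hand side the correction $\alpha\{\delta(F)\}$ dies at order $\nov^{e_p}$ by exactly the same mechanism as in part (\ref{it:eps_p_chain}), while $\eps(\cR^1_F(\alpha))$ decomposes at that order as $\eps_{e_p}(\cR^1_{F_0}(\alpha)) + \eps_0(g_{e_p})$, where $g_{e_p}$ is the $\nov^{e_p}$-component of $\cR^1_F(\alpha)$. Passing to cohomology kills the $\eps_0$-exact term and yields $[\eps_{e_p}] \circ [\cR^1_{F_0}] = [\cR^1_{F_0}] \circ Obs_p$.

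The hard part will be purely bookkeeping: carefully enumerating the contributions to each brace at order $\nov^{e_p}$ and verifying that \textbf{(Nice $2_p$)}, together with $\delta(F_0) = 0$ and the hypothesis $\delta(F)_{e_p} = 0$, really does strip both identities down to the short expressions above. I do not anticipate any conceptual obstacle beyond this combinatorics.
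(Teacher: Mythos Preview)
Your proposal is correct and follows essentially the same approach as the paper: both parts are obtained by reading off the $\nov^{e_p}$-coefficient of Lemma~\ref{lem:epseps} and Lemma~\ref{lem:R1eps} respectively, using \textbf{(Nice $2_p$)} together with $\delta(F_0)=0$ and the hypothesis $\delta(F)_{e_p}=0$ to kill the unwanted terms. The only cosmetic difference is in part~(\ref{it:eps_p_obs}): the paper does not assume $\alpha$ is a cocycle and instead obtains the chain-level homotopy $\cR^1_{F_0}\circ\tilde\eps_{e_p} + (\cR^1_F)_{e_p}\circ\tilde\eps_0 = \eps_{e_p}\circ\cR^1_{F_0} + \eps_0\circ(\cR^1_F)_{e_p}$, whereas you impose the cocycle condition up front to make the $\tilde\eps_0$-term vanish and pass to cohomology directly; these amount to the same thing.
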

\begin{proof}
    By Lemma \ref{lem:epseps}, we have 
    $$\eps(\eps(h)) = (-1)^{|h|}\mu_{\cA}\{h,\delta(F)\} + \mu_{\cA}\{\delta(F),h\}.$$
    Taking the $\nov_p$ Taylor coefficient of this equation, and using the hypotheses that $\delta(F)_0 = \delta(F_0) = 0$ and $\delta(F)_{e_p} = 0$ together with the assumption that $\NE_A$ satisfies \textbf{(Nice $2_p$)}, gives \eqref{it:eps_p_chain}.

    Now let us define
    \begin{align*}
        \tilde{\eps}: CC^*(\cA) & \to CC^{*+1}(\cA),\\
        \tilde{\eps}(h) &= \mu_\cA \{h\} +(-1)^{|h|} h \{\mu_\cA\}.
    \end{align*}
    Let $\tilde{\eps}_u: CC^*(\cA_0) \to CC^{*+1+y(u)}(\cA_0)$ denote the Taylor coefficients of $\tilde{\eps}$. 
    From the definitions, $\tilde{\eps}_0$ is equal to the Hochschild differential on $CC^*(\cA_0)$, while $\tilde{\eps}_{e_p}$ is equal to $Obs_p$.
    
    By Lemma \ref{lem:R1eps}, we have
    $$\cR^1_F(\tilde{\eps}(h)) = \eps(\cR^1_F(h)) + (-1)^{|h|}h\{\delta(F)\}.$$
    Taking the $\nov_p$ Taylor coefficient of this equation, and using the same assumptions as above, gives
    $$\cR^1_{F_0} \circ \tilde{\eps}_{e_p} + (\cR^1_F)_{e_p} \circ \tilde{\eps}_0 = \eps_{e_p} \circ \cR^1_{F_0} + \eps_0 \circ (\cR^1_F)_{e_p}.$$
    Thus, $\cR^1_{F_0} \circ Obs_p$ is homotopic to $\eps_{e_p} \circ \cR^1_{F_0}$ via the homotopy $(\cR^1_F)_{e_p}$, which yields \eqref{it:eps_p_obs}. 
\end{proof}

It does not quite work to construct $\psi_{p,u}$ and $F_u$ order-by-order with respect to the $\fm$-adic filtration. 
Rather, we need to work order-by-order with respect to a slightly different partial order on $\NE_A$:

\begin{lem}\label{lem:porder}
    There exists a partial order $\le$ on $\NE_A$ with the following properties:
    \begin{itemize}
        \item if $v-u \in \NE_A$, then $u \le v$;
        \item if $y(v) = 0$ and $v-u+e_p \in \NE_A\setminus \{0\}$, then $u \le v$;
        \item $(\NE_A)^{\leq v} =  \{u: u \le v\}$ is finite for all $v \in \NE_A$. That is, $\NE_A$ is cofinite.
    \end{itemize}
\end{lem}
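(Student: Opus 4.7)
The plan is to define $\le$ as the reflexive, transitive closure of the two elementary relations appearing in the statement, and then to verify antisymmetry and cofiniteness using the niceness assumptions on $\NE_A$ together with the injectivity of $y \colon P \to Y$. For brevity, call these elementary relations (R1) ``$v - u \in \NE_A$'' and (R2) ``$y(v) = 0$ and $v - u + e_p \in \NE_A \setminus \{0\}$ for some $p \in P$''. With this definition, the first two bullets of the lemma hold by construction, and the content lies entirely in verifying the third bullet and antisymmetry.

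First I fix a linear functional $L \colon \R^P \to \R$ strictly positive on $\NE_A \setminus \{0\}$, which exists because $\NE_\R$ is strongly convex. A useful preliminary reduction is that an (R2) step $u \to v$ whose source satisfies $y(u) = 0$ is already an (R1) step: setting $w := v - u + e_p$ gives $y(w) = y_p - y(u) = y_p$, so Nice $3_p$ yields $w - e_p = v - u \in \NE_A$. One may therefore normalize chains so that every (R2) step has $y(u) \ne 0$ at its source.

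To prove antisymmetry, suppose $u \le v$ and $v \le u$; concatenating the two chains produces a cycle $u = u_0 \to u_1 \to \cdots \to u_k = u$. After the normalization above, write the zero total change $\sum (u_{i+1} - u_i) = 0$ as $A + \sum_j (w_j - e_{p_j})$ with $A \in \NE_A$ and $w_j \in \NE_A \setminus \{0\}$, and rearrange to $A + \sum_j w_j = \sum_j e_{p_j}$ inside $\NE_A$. Nice $2_p$ (indecomposability of each $e_p$) severely restricts such decompositions, and matching the $y$-constraints attached to the (R2) steps against the injectivity of $y$ forces every intermediate step to be trivial, so $u = v$.

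For cofiniteness, I first observe that the set of immediate predecessors of any $v$ is finite: an immediate (R1)-predecessor satisfies $L(u) \le L(v)$; an immediate (R2)-predecessor satisfies $L(u) \le L(v) + \max_p L(e_p)$; and $\NE_A$ intersected with any bounded $L$-sublevel set is finite by discreteness. It remains to rule out infinite strictly descending chains, which together with finite branching and antisymmetry yields the full cofiniteness statement via a K\"onig's-lemma-style argument. To bound descending chains I intend to track the $y$-class: after normalization each non-reducible (R2) step moves between the $y = 0$ locus and a non-zero $y$-class, and the $y$-classes reachable by (R1) steps from a given element lie in a finitely generated sub-monoid of $Y$. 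The main obstacle is precisely this last combinatorial control of the aggregate $L$-displacement produced by many (R2) steps along a single chain, and I expect a careful bookkeeping argument balancing $L$-growth against progression through $y$-classes, together with Nice $2$ and Nice $3$, to resolve it.
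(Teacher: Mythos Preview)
Your definition of $\le$ as the reflexive--transitive closure of (R1) and (R2) is the right one, and your preliminary reduction --- that an (R2) step with $y(u)=0$ at the source is already an (R1) step, by \textbf{(Nice $3_p$)} --- is exactly one of the two key observations. The gap is that you are missing the companion observation, and without it both your antisymmetry sketch and your cofiniteness argument remain incomplete (as you yourself acknowledge for the latter).

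The missing observation is that an (R1) step followed by an (R2) step collapses to a single (R2) step: if $v-u\in\NE_A$ and $y(w)=0$ with $w-v+e_p\in\NE_A\setminus\{0\}$, then $w-u+e_p=(w-v+e_p)+(v-u)\in\NE_A\setminus\{0\}$, so $u$ is already an (R2)-predecessor of $w$. Combining this with your reduction, any chain $u_0\to u_1\to\cdots\to u_k$ collapses to a chain with at most one (R2) step, followed by at most one (R1) step. In other words, the relation ``$u\le_1 v$ or $\exists\,w$ with $u\le_2 w\le_1 v$'' is already transitive and coincides with your transitive closure.

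Once you have this, everything becomes finite and explicit. Antisymmetry reduces to four short cases (depending on whether each of $u\le v$ and $v\le u$ involves an (R2) step), each dispatched by writing $e_p$ as a sum of two or three elements of $\NE_A$ and invoking \textbf{(Nice $2_p$)} together with strong convexity; no appeal to injectivity of $p\mapsto y_p$ is needed. Cofiniteness follows immediately from the uniform bound $L(u)\le L(v)+\max_{p\in P}L(e_p)$, since at most one (R2) step occurs; your K\"onig's-lemma strategy and the ``careful bookkeeping argument balancing $L$-growth against progression through $y$-classes'' are then unnecessary. The obstacle you identified --- controlling the aggregate $L$-displacement of many (R2) steps --- disappears precisely because there cannot be many.
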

\begin{proof}
    Let us define $u \le_1 v$ if $v-u \in \NE_A$, and $u \le_2 v$ if $y(v) = 0$ and $v-u+e_p \in \NE_A \setminus \{0\}$ for some $p \in P$. 
    We define $u\le v$ if and only if at least one of the following holds: $u \le_1 v$; or there exists $w \in \NE_A$ such that $u \le_2 w \le_1 v$.  
    Reflexivity ($u \le u$ for all $u$) is clear; it remains to check antisymmetry ($u \le v \le u$ implies $u=v$) and transitivity ($u \le v \le w$ implies $u \le w$). 
    
    In order to do this, we start by observing that:
    \begin{enumerate}
        \item \label{it:le1le1} if $u \le_1 v \le_1 w$ then $u \le_1 w$;
        \item \label{it:le1le2} if $u \le_1 v \le_2 w$ then $u \le_2 w$;
        \item \label{it:0le2} if $y(u) = 0$ and $u \le_2 v$ then $u \le_1 v$. 
    \end{enumerate}
    Only \eqref{it:0le2} requires an argument. Note that by definition, $v-u+e_p \in \NE_A \setminus \{0\}$. As $y(u) = y(v) = 0$, we have $y(v-u+e_p) = y_p$. As $\NE_A$ satisfies \textbf{(Nice $3_p$)}, this implies that $v-u \in \NE_A$, which implies $u \le_1 v$ as required.

    Now suppose that $u \le v \le u$. We split into cases:
    \begin{itemize}
    \item if $u \le_1 v \le_1 u$, then $u=v$ because $\NE_A$ is strongly convex.
    \item if $u \le_1 v \le_2 w \le_1 u$, then we have $e_p = (v-u) + (w-v+e_p) + (u-w)$ with each bracketed term lying in $\NE_A$, and the middle one being non-zero. As $\NE_A$ satisfies \textbf{(Nice $2_p$)} and is strongly convex, this implies that $v-u = w-u = 0$, in particular $u=v$. 
    \item if $u \le_2 w \le_1 v \le_1 u$, then $e_p = (w-u+e_p) + (v-w) + (u-v)$ and the same argument shows that $u-v = v-w = 0$, in particular $u=v$.
    \item if $u \le_2 w \le_1 v \le_2 w' \le_1 u$, then we have
    \begin{align*}
        &u \le_2 w \le_2 w' \le_1 u \qquad \text{by \eqref{it:le1le2}} \\
        \Rightarrow & u \le_2 w \le_1 w' \le_1 u \qquad \text{ by \eqref{it:0le2} (as $y(w) = 0$)}\\
        \Rightarrow &u=w \qquad \text{by the third case.}
    \end{align*}
    Thus $u \le_1 v \le_2 w' \le_1 u$, so $u=v$ by the second case.
    \end{itemize}
    In each case, we have proved $u=v$, so $\le$ is antisymmetric.

    Now suppose that $u \le v \le w$. We split into cases:
    \begin{itemize}
        \item if $u \le_1 v \le_1 w$, then $u \le_1 w$ by \eqref{it:le1le1}, so $u \le w$.
        \item if $u \le_1 v \le_2 w' \le_1 w$, then $u \le_2 w' \le_1 w$ by \eqref{it:le1le2}, so $u \le w$.
        \item if $u \le_2 w' \le_1 v \le_1 w$, then $u \le_2 w' \le_1 w$ by \eqref{it:le1le1}, so $u \le w$.
        \item if $u \le_2 v' \le_1 v \le_2 v'' \le_1 w$, then we have
        \begin{align*}
        &u \le_2 v' \le_2 v'' \le_1 w \qquad \text{by \eqref{it:le1le2}}\\
    \Rightarrow &u \le_2 v' \le_1 v'' \le_1 w \qquad \text{by \eqref{it:0le2} (as $y(v') = 0$)}\\
    \Rightarrow &u \le_2 v' \le_1 w \qquad \text{by \eqref{it:le1le1}}\\
    \Rightarrow & u \le w.
    \end{align*}
   \end{itemize}
   In each case, we have proved $u \le w$, so $\le$ is transitive. 
   This completes the proof that $\le$ is a partial order; it clearly has the first two desired properties.

     In order to establish the final property, let $\lambda$ be an element of the interior of the dual cone to $\NE_\R$ (which is non-empty as $\NE_A$ is strongly convex). We note that $u \le v$ implies an upper bound
     $$\lambda(u) \le \lambda(v) + \max_{p \in P} \lambda(e_p).$$
     The property now follows as the region $\NE_\R \cap \{\lambda \le C\}$ is compact for any $C$, and hence contains finitely many lattice points.
\end{proof}

\begin{lem}
    For each $u \in \NE_A$, the number
    $$k(u) := \sup\{k: \exists \,u_0<u_1<\ldots <u_k=u\}$$
    is finite. (Here `$u < v$' means `$u \le v$ and $u \neq v$'.)
\end{lem}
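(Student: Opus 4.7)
The plan is to deduce this finiteness directly from the cofiniteness property of $\le$ established in Lemma \ref{lem:porder}, namely that $(\NE_A)^{\le u}$ is a finite set.

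The key observation is that any chain $u_0 < u_1 < \cdots < u_k = u$ consists of pairwise distinct elements all lying in $(\NE_A)^{\le u}$. Distinctness follows from the definition of strict inequality together with transitivity and antisymmetry of $\le$: for $i<j$ one has $u_i < u_{i+1} \le u_j$, whence $u_i \ne u_j$; and containment in $(\NE_A)^{\le u}$ follows because $u_i \le u_k = u$ by transitivity. Therefore
\[
k+1 \;=\; |\{u_0,\dots,u_k\}| \;\le\; |(\NE_A)^{\le u}|,
\]
and the right-hand side is finite by Lemma \ref{lem:porder}. Taking the supremum over chains gives $k(u) \le |(\NE_A)^{\le u}| - 1 < \infty$.

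There is essentially no obstacle here: the entire content of the lemma has been packaged into the cofiniteness clause of Lemma \ref{lem:porder}. The only subtlety worth spelling out is the justification that the chain elements are all distinct, which is a one-line consequence of antisymmetry. The function $k(u)$ will be used in subsequent sections as a well-founded induction parameter for constructing $\psi_{p,u}$ and $F_u$ order-by-order with respect to $\le$.
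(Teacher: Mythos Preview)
Your proof is correct and is essentially identical to the paper's own argument: both observe that the elements of a strict chain are pairwise distinct and all lie in $(\NE_A)^{\le u}$, and then invoke the finiteness of this set from Lemma \ref{lem:porder}. Your write-up simply spells out the distinctness step slightly more explicitly than the paper does.
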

\begin{proof}
    If $u_0<u_1<\ldots <u_k=u$, then all $u_i$ are distinct and $\le u$, because $\le$ is a partial order by Lemma \ref{lem:porder}. 
    Hence $k(u)$ is bounded above by $\#\{v:v \le u\}$, which is finite by Lemma \ref{lem:porder}. 
\end{proof}
    
\begin{proof}[Proof of Proposition \ref{prop:vers}]
We assume inductively that $\delta(F)_u = 0$ for all $u$ such that $k(u) < k$. 
The base case $k=1$ holds by assumption on $F_0$. 

We achieve the inductive step by modifying $\psi_p$ and $F$ in such a way that $\delta(F)_u = 0$ for $u$ such that $k(u) = k$ (and $\delta(F)_u = 0$ is unaffected for $k(u) < k$). 
For any $u$ with $k(u)=k$, let us take the $\nov^u$-Taylor coefficient of the equation $\eps(\delta(F)) = 0$ from Lemma \ref{lem:epsdel}. 
It gives
$$ \sum_{v+w=u} \eps_v(\delta(F)_w) = 0.$$
Now $v+w = u$ implies $w \le u$, so $k(u) \ge k(w)$ with equality if and only if $u=w$. 
As $\delta(F)_w = 0$ for all $k(w)<k = k(u)$ by the inductive assumption, the only nonvanishing term is $v=0$, $w=u$, which gives $\eps_0(\delta(F)_u) = 0$. 
Thus we have a cohomology class 
$$[\delta(F)_u] \in H^{2+y(u)}(C_0).$$

\paragraph{Step $1_k$: Arrange that $\delta(F)_u = 0$ for all $u$ such that $y(u) = y_p$ for some $p$.}  
Because $\HH^{2+y_p}(\cB_0)$ is spanned by the first-order deformation class $b_p$ of $\cB$ by assumption, and $\cL^1_{F_0}$ is an isomorphism, we have
$$[\delta(F)_u] = \cL^1_{F_0} (c_{u} \cdot b_p)$$
for some $c_u \in \Bbbk$. 
Because $\NE_A$ satisfies \textbf{(Nice $3_p$)}, $u - e_p \in \NE_A$. Thus we may modify $\psi_{p,u-e_p} \mapsto \psi_{p,u-e_p} + c_u$. 
This has the effect of modifying
$$\mu_{\Psi^* \cB} \mapsto \mu_{\Psi^*\cB} + c_u\nov^u \cdot \mu_{\cB,e_p} + o(\nov^u),$$
where `$o(\nov^u)$' means a sum of terms $\nov^{u+v}$ with $v \in \NE_A \setminus \{0\}$. Note that $k(u+v) > k(u) = k$ when $v \in \NE_A\setminus \{0\}$.
As a result, it has the effect of modifying $\delta(F) = \mu_{\cA}\{\}_F - F\{\mu_{\Psi^*\cB}\}$ by
\begin{align*}
\delta(F) & \mapsto \delta(F) - c_u\nov^u \cdot F_0\{b_p\} + o(\nov^u)\\
& = \delta(F) - c_u\nov^u \cL^1_{F_0} \left(b_p\right) + o(\nov^u).
\end{align*}
In particular, we have arranged $[\delta(F)_u] = 0$, without altering $\delta(F)_v$ for any $v \neq u$ with $k(v) \le k$.

We now choose $f_u \in C_0^{1+y_p}$ such that $\partial f_u = \delta(F)_u$. 
We now modify $F \mapsto F-f_u \nov^u$.  
One may easily check directly that this has the effect of modifying
$$\delta(F) \mapsto \delta(F) - \partial f_u\nov^u +o(\nov^u).$$
In particular, we have $\delta(F)_u = 0$ after this modification, without altering $\delta(F)_v$ for any $v \neq u$ with $k(v) \le k$.

\paragraph{Step $2_k$: Arrange that $\delta(F)_u = 0$ for all $u$ such that $y(u) \notin \{0\} \cup \{y_p\}_{p \in P}$.} 
Observe that $[\delta(F)_u] \in \HH^{2+y(u)}(C_0) \cong \HH^{2+y(u)}(\cA_0) = 0$ by assumption (as $u \in \NE_A$, so $y(u) \in y(\NE_A)$). Thus we may modify $F$ so that $\delta(F)_u = 0$, as in Step $1_k$.

\paragraph{Step $3_k$: Arrange that $\delta(F)_u = 0$ for all $u$ such that $y(u) = 0$.} 
Taking the $\nov^{u+e_p}$ Taylor coefficient of the equation $\eps(\delta(F)) = 0$ from Lemma \ref{lem:epsdel}, we have
$$\sum_{v,w \in \NE_A, v+w=u+e_p} \eps_v(\delta(F)_w) = 0.$$
We claim that the only non-zero terms are $(v,w) = (0,u+e_p)$ and $(e_p,u)$. 
Indeed, when $v \neq 0$, we have $w \le u$ by definition; and if furthermore $v \neq e_p$ then $w < u$, so $k(w)<k(u) = k$, which implies that $\delta(F)_w = 0$ by the inductive hypothesis. 
Thus we obtain
$$\eps_{e_p}(\delta(F)_u) + \eps_0(\delta(F)_{u+e_p}) = 0,$$
showing that $\eps_{e_p}(\delta(F)_u)$ is exact. 
As $\delta(F)_{e_p}=0$ by Step $1_2$ (note that this always precedes Step $3_k$ for any $k \ge 2$), Lemma \ref{lem:eps_obs} implies that $[\delta(F)_u]$ lies in $\cR^1_{F_0}(\ker(Obs)) = 0$. 
Thus $[\delta(F)_u] = 0$, so we may modify $F$ to arrange that $\delta(F)_u=0$ as in Steps $1_k$, $2_k$.

This completes the inductive construction; note that our successive modifications to $\Psi$ and $F$ converge, by $\fm$-adic completeness.
There remains one final thing to check, at the first non-trivial step of the induction $k=2$. 
Namely, we need to ensure that $\psi_p(0) \in \Bbbk$ is a unit, so that $\psi_p$ are indeed units in $R_0$. 
For this we observe that we choose $\psi_p(0)$ so that
$$\cR^1_{F_0}(a_p) = \psi_p(0) \cdot \cL^1_{F_0}(b_p).$$
As $\cR^1_{F_0}$ and $\cL^1_{F_0}$ are isomorphisms, and both $b_p$ and $a_p$ generate the corresponding graded piece which is free of rank $1$ by assumption, we conclude that $\psi_p(0)$ is a unit as required. 
This completes the proof of Proposition \ref{prop:vers}.
\end{proof}

\section{Tropical regularity criterion}\label{sec:tropreg}

The purpose of this section is to prove a standard tropical criterion for smoothness of a hypersurface in a toric variety, essentially by combining \cite[Proposition 4.5.1]{Maclagan2007} and \cite[Corollary 3.1.7]{Batyrev1993}. 
Let $\BbK$ be a field, and $\val: \BbK \to \R \cup \{\infty\}$ a non-Archimedean valuation, which is equal to $0$ on the image of the natural homomorphism $\Z/\charac{\BbK}  \to \BbK$, except for $0$ which of course has valuation $\infty$.

Let $\Sigma^*$ be a complete fan in $M$, and $Y^*_\BbK$ the corresponding toric variety over $\BbK$. 
Let $\cL_{\Delta^*}$ be the ample line bundle over $Y^*_\BbK$ corresponding to the convex polytope $\Delta^*$ in $M^*_\R$, with a basis of sections $z^{\vec{p}}$ indexed by the set $\Xi$ of lattice points in $\Delta^*$. 
Note that in the proof of Lemma \ref{lem:sing_iso}, we will have $Y^*_\BbK = \mathbb{A}^I_{\BbK}$. 

We consider a hypersurface 
$$X^*_b = \left\{\sum_{\vec{p} \in \Xi} b_{\vec{p}} \cdot z^{\vec{p}} = 0\right\} \subset Y^*_\BbK.$$
We consider the function
\begin{align*}
v: \Xi & \to \R\cup\{\infty\},\\
v(\vec{p}) &= \val(b_{\vec{p}}).
\end{align*}
Let $\psi: \Delta^* \to \R \cup\{-\infty\}$ be the smallest convex function such that $\psi(\vec{p}) \ge -v(\vec{p})$. 
We will assume the function $\psi$ is finite, which is equivalent to $v(\vec{p}) \neq \infty$ for all vertices $\vec{p}$ of $\Delta^*$. 
The decomposition into domains of linearity of $\psi$ induces a subdivision of $\Delta^*$ into polytopes. 

\begin{prop}\label{prop:tropreg}
Suppose that either $\BbK$ is algebraically closed, or $\BbK = \Lambda_{\Bbbk,Q}$; $\Sigma^*$ is smooth; and the subdivision of $\Delta^*$ induced by $v$ is a decomposition into simplices, all of which have normalized affine volume which is not divisible by $\charac(\BbK)$, and which only intersect $\Xi$ at their vertices. Then $X^*_b$ is smooth.
\end{prop}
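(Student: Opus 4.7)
The plan is to argue by contradiction, using the tropical machinery to localize any hypothetical singular point of $X^*_b$ to a cell of the subdivision of $\Delta^*$, and then to rule out such singularities using the simplex, volume, and $\Xi$-vertex hypotheses.

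First I would reduce to the case that $\BbK$ is algebraically closed: smoothness is preserved under field extension, and in both cases of the hypothesis on $\BbK$ the algebraic closure carries a compatible non-Archimedean valuation satisfying the stated condition on $\val$. Since $\Sigma^*$ is smooth, $Y^*_\BbK$ is a smooth toric variety stratified by torus orbits $O_\tau$, and the restriction of $f = \sum b_{\vec p} z^{\vec p}$ to $O_\tau$ is supported on the lattice points of the face $F_\tau \subset \Delta^*$. The regular subdivision induced by $v$ respects the face structure of $\Delta^*$, so its restriction to $F_\tau$ is a simplicial subdivision satisfying the same volume and $\Xi$-vertex hypotheses. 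A standard stratum-wise smoothness criterion (cf.\ \cite[Corollary 3.1.7]{Batyrev1993}) then reduces the proof to showing that $\{f_\tau = 0\}$ is smooth in the open torus $O_\tau$ for each $\tau$; it suffices to treat the case $\tau = 0$, the other strata being handled identically.

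Suppose for contradiction that $x$ is a singular point of $X^*_b$ in the open torus $T \subset Y^*_\BbK$, and set $\vec w = \val(x) \in M_\R$. A standard tropical computation---expanding $f(x) = 0$ and $\partial f/\partial z_i(x) = 0$ in powers of a uniformizer after rescaling $x_i$ by its valuation $w_i$---shows that the residue of the rescaled point is a singular point of the initial form
\[ \mathrm{in}_{\vec w}(f) \;=\; \sum_{\vec p \in S} \ol{b}_{\vec p} \cdot z^{\vec p}, \qquad S \;=\; \{\vec p \in \Xi : v(\vec p) + \langle \vec w, \vec p\rangle \text{ is minimal}\}, \]
in the torus over the residue field. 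The set $S$ consists of the lattice points of a single cell of the subdivision induced by $v$, which by hypothesis is a simplex $\sigma$ with vertices $\vec q_0, \ldots, \vec q_k \in \Xi$ and no other lattice points of $\Xi$, of normalized affine volume $d$ coprime to $\charac(\BbK)$.

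It remains to derive a contradiction by showing $\mathrm{in}_{\vec w}(f) = \sum_{i=0}^k c_i z^{\vec q_i}$ (with $c_i$ nonzero in the residue field) is smooth in the torus. After factoring out the unit monomial $z^{\vec q_0}$, this reduces to smoothness of $\{h=0\}$ for $h = c_0 + \sum_{i=1}^k c_i z^{\vec v_i}$ with $\vec v_i = \vec q_i - \vec q_0$. Letting $L \subset M^*$ denote the saturation of $\Z\vec v_1 + \cdots + \Z\vec v_k$, the zero locus $\{h=0\}$ is a cylinder over its trace in the subtorus $\Hom(L, \G_m)$. The inclusion $\Z\vec v_1 + \cdots + \Z\vec v_k \hookrightarrow L$ has index $d$, and dualizes to an index-$d$ surjection
\[ \Hom(L, \G_m) \twoheadrightarrow \Hom(\Z\vec v_1 + \cdots + \Z\vec v_k, \G_m) \;\cong\; \G_m^k \]
which is étale precisely because $\charac(\BbK) \nmid d$; on the target torus, $h$ becomes the linear polynomial $c_0 + c_1 t_1 + \cdots + c_k t_k$, whose vanishing locus is manifestly smooth. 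Since étale base change preserves smoothness, the preimage $\{h=0\}$ in $\Hom(L, \G_m)$ is smooth, yielding the desired contradiction. The main obstacle is this final étale-cover step: the volume-coprime-to-characteristic hypothesis is exactly what makes the cover étale, and the conclusion genuinely fails without it (a simplex whose volume is divisible by $\charac(\BbK)$ can admit a singular initial form). The rest of the argument is standard tropical bookkeeping together with Batyrev's stratum-wise reduction.
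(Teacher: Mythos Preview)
Your proof is correct and follows the same overall arc as the paper's: reduce to the algebraically closed case, suppose a singular point $x$ exists, use the valuation of $x$ to localize the problem to a single simplicial cell of the subdivision, and then exploit the volume-coprime-to-characteristic hypothesis to rule it out.

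The execution of the final step differs, however. You pass to the initial form $\mathrm{in}_{\vec w}(f)$ over the residue field and argue geometrically: after factoring out a monomial, the initial form is pulled back along an isogeny $\Hom(L,\G_m)\to\G_m^k$ of degree $d$ (the simplex volume) from a linear polynomial, and this isogeny is \'etale precisely when $\charac(\BbK)\nmid d$. The paper instead stays over $\BbK$ and never mentions the residue field: from the equations $f(x)=0$ and $z_i\partial_i f(x)=0$ it extracts the vector identity $\sum_{\vec p} b_{\vec p}\, x^{\vec p-\vec p_0}\,(\vec p-\vec p_0)=0$, then chooses a dual vector $\vec q\in M$ which vanishes on all but one vertex $\vec p'$ of the simplex and satisfies $\langle\vec q,\vec p'-\vec p_0\rangle$ equal to the lattice distance from $\vec p'$ to the opposite face---a divisor of the simplex volume, hence nonzero in $\BbK$. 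Pairing with $\vec q$ isolates a unique term of minimal finite valuation in a sum that must vanish, contradicting the non-Archimedean triangle inequality. Your \'etale cover and the paper's dual vector $\vec q$ encode the same lattice arithmetic; the paper's argument is more elementary (no passage to the residue field, no invocation of Batyrev's stratum-wise criterion as a black box) while yours is more structural and makes the role of the volume hypothesis more transparent.
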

\begin{proof}
It suffices to prove the case when $\BbK$ is algebraically closed; the case $\BbK = \Lambda_{\Bbbk,Q}$ then follows from the case $\BbK = \bar{\Lambda}_{\Bbbk,Q} = \Lambda_{\bar{\Bbbk},\bar{Q}}$ where $\bar{\Bbbk}$ is the algebraic closure of $\Bbbk$ and $\bar{Q}$ is the saturation of $Q$ in $\R$. 

Suppose then, to the contrary, that $\BbK$ is algebraically closed and $X^*_b$ is not smooth. 
Then there exists a non-smooth closed point $x \in X^*_b$. 
We assume that it lies in the toric orbit corresponding to the cone $\sigma$ of $\Sigma^*$. 
Then $x$ lies in the Zariski-open chart $\spec\left( \BbK[\sigma^\vee]\right)$ of $Y^*_\BbK$. 
We choose a basis $\{e_i\}_{i=1,\ldots,n}$ for $M^*$ which is contained in $\sigma^\vee$, and such that $\{e_i\}_{i=1,\ldots,k}$ is a basis for the largest linear subspace contained in $\sigma^\vee$. 
This induces an isomorphism $\BbK[\sigma^{\vee}] \simeq \BbK[z_1^{\pm 1},\ldots,z_k^{\pm 1},z_{k+1},\ldots,z_n]$.
The intersection of $X^*_b$ with this chart is cut out by the equation $f=0$, 
$$ f(z) = \sum_{\vec{p} \in \Xi} b_{\vec{p}} \cdot z^{\vec{p} - \vec{p}_0},$$
where $\vec{p}_0$ is a lattice point lying on the linear subspace $F_\sigma$ supporting the face of $\Delta^*$ dual to $\sigma$.  

As $x=(x_1,\ldots,x_k,0,\ldots,0)$ is a singular point of $X^*_b$, we have $f(x)=0$ and
\begin{align}
\nonumber z_i \frac{\partial f}{\partial z_i}(x) &=0 \qquad \text{for $i=1,\ldots,k$} \\
\nonumber\Rightarrow \sum_{\vec{p} \in \Xi} b_{\vec{p}} \cdot (\vec{p}-\vec{p}_0)_i \cdot x^{\vec{p} - \vec{p}_0} &=0 \qquad \text{for $i=1,\ldots,k$}\\
\label{eq:derivs}\Rightarrow \sum_{\vec{p} \in \Xi} b_{\vec{p}} \cdot x^{\vec{p} - \vec{p}_0} \cdot (\vec{p}-\vec{p}_0) &=0.
\end{align}
We now let $\val(x) = (\val(x_1),\ldots,\val(x_k)) \in \R^k$, and consider the affine linear functions
\begin{align*}
u_{\vec{p}}: \R^k & \to \R\\
u_{\vec{p}}(w) & = v(\vec{p}) + \langle w,\vec{p}-\vec{p}_0\rangle
\end{align*} 
for $\vec{p}$ lying on $F_\sigma$. 
We observe that
$$ \val \left(b_{\vec{p}}\cdot x^{\vec{p}-\vec{p}_0}\right) = 
\left\{ \begin{array}{ll}
    u_{\vec{p}}(\val(x)) & \text{if $\vec{p}$ lies on $F_\sigma$}\\
    \infty & \text{otherwise.}
    \end{array} \right.
$$

Consider the set
\begin{align*}
A_w &= \left\{\vec{p} \in\Xi: u_{\vec{p}}(w)\text{ minimal}\right\} \qquad \text{for $w \in \R^k$}.
\end{align*}
Note that by our assumption that $v(\vec{p})$ is finite at the vertices of $\Delta^*$, the function $f$ does not vanish along the toric orbit containing $x$; therefore, by the non-Archimedean triangle inequality, $f(x) = 0$ implies $\# A_{\val(x)} \ge 2$. 

By our assumption on the subdivision induced by $v(\vec{p})$, and Legendre duality, $A_{\val(x)}$ is the set of vertices of a simplex. 
We may choose $\vec{p}_0$ to be one of the vertices, so that the simplex is contained in a linear (not just affine linear) subspace. 
Let $\vec{p}'$ be a non-zero vertex of $A_{\val(x)}$ (which exists as $\#A_{\val(x)} \ge 2$). 
Then there exists $\vec{q} \in M$ such that $\langle \vec{q},\vec{p}-\vec{p}_0\rangle$ vanishes for all vertices $\vec{p}$ of the simplex except for $\vec{p}'$, and $\langle\vec{q},\vec{p}'-\vec{p}_0\rangle$ is the affine distance from $\vec{p}'-\vec{p}_0$ to the opposite face. 
This distance multiplied by the normalized affine volume of the opposite face gives the normalized affine volume of the simplex, which is not divisible by $\charac(\BbK)$ by hypothesis; in particular, $\langle \vec{q},\vec{p}'-\vec{p}_0\rangle$ is non-zero in $\BbK$, and therefore has zero valuation by our assumption on $\BbK$. 

It follows that when we pair $\vec{q}$ with equation \eqref{eq:derivs}, the term $\vec{p} = \vec{p}'$ is the unique one with the minimal valuation, and that valuation is not $+\infty$; however the sum of terms should vanish, which contradicts the non-Archimedean triangle inequality. 
Therefore $X^*_b$ is smooth.
\end{proof}

\section{Computation of the mirror map}
\label{sec:comp_mm}

In this appendix, we outline how to explicitly compute the mirror map using \cite[Section 6.3.4]{coxkatz} by reducing the calculation to a series of lemmas that follow from direct computations. Compare \cite{Adolphson2014}.
 
Let $\mathcal{A} = (P \cup \{0\}) \times \{1\} \subset M \oplus \Z$, and $\hat\beta = (0,-1) \in M \oplus \Z$. 
The lattice of relations of $\mathcal A$ is isomorphic to $K$, via
\begin{align*}
    K& \to \Z^{P \cup \{0\}} \\
    k&\mapsto (k,-|k|)
\end{align*}
where $|k|:=\sum_i k_i$ for $k\in \Z^P$. 

The $\mathcal A$-GKZ system, as defined in \cite[Section 5.5]{coxkatz}, is a system of differential equations for a function $\phi(\snov)$ of variables $\snov = (\snov_{\vec{p}})_{\vec{p} \in \mathcal{A}}$:
\begin{align}
\label{eq:GKZ_Z}    Z_v\phi(\snov) &= v(\hat\beta) \qquad \text{ for $v \in (M \oplus \Z)^\vee$,}\\
\label{eq:GKZ_square}    \square_k \phi(\snov) &= 0 \qquad \text{ for $k \in K$,}
\end{align}
where 
\begin{align*}
    Z_v &= \sum_{\vec{p} \in \mathcal A} v(\vec{p}) \cdot \snov_{\vec{p}} \frac{\partial}{\partial \snov_{\vec{p}}},\\
    \square_k &= \partial^{k_+}- \partial ^{k_-}, \qquad \text{where}\\
    \partial^{k_\pm}&= \prod_{\vec{p}:\pm k_{\vec{p}} >0} \partial_{\snov_{\vec{p}}}^{\pm k_{\vec{p}}}. 
\end{align*}

By \cite[Section 6.3.4]{coxkatz} (see also \cite{Batyrev1995}), one computes the mirror map as follows. 
We set $\nov_{\vec{p}} = -\snov_{\vec{p}}/\snov_{0}$, for $\vec{p} \in P$. 
Find a holomorphic function $\tau(\nov)$ such that $\snov_0^{-1}\tau(\nov)$ satisfies equations \eqref{eq:GKZ_Z}, \eqref{eq:GKZ_square}.
Next, for $u \in K$, find a holomorphic function $\tau_u(\nov)$ such that $\snov_0^{-1}(\tau(\nov) \log(\nov^u) + \tau_u(\nov))$ satisfies the same equations, and $\tau_u$ has vanishing constant term. 
Then the mirror map sends 
$$\nov^u \mapsto \nov^u \cdot \exp\left(\frac{\tau_u(\nov)}{\tau(\nov)}\right).$$

\begin{lem}
    Let
   \[ \tau(\nov)= \sum_{u\in K_{\geq 0}} \mathsf{comb}(u) \cdot \nov^u.\]
   Then $\snov_0^{-1} \tau(\nov)$ is a solution to \eqref{eq:GKZ_Z} and \eqref{eq:GKZ_square}.
\end{lem}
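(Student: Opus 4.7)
The plan is to unpack $\Phi(\snov):=\snov_0^{-1}\tau(\nov)$ in the $\snov$-variables and then verify the two families of GKZ equations on each monomial. Substituting $\nov_{\vec p}=-\snov_{\vec p}/\snov_0$ and writing $|u|:=\sum_{\vec p}u_{\vec p}$, one gets
\[
\Phi(\snov)=\sum_{u\in K_{\ge 0}} c(u)\,\snov^{\tilde u},\qquad c(u)=\frac{(-1)^{|u|}|u|!}{\prod_{\vec p} u_{\vec p}!},\qquad \tilde u=(u,-1-|u|)\in \Z^{P\cup\{0\}}.
\]
Every monomial lives in a single weight, so the two sets of equations can be checked term by term.

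First I verify the homogeneity equations \eqref{eq:GKZ_Z}. For any $v\in (M\oplus\Z)^\vee$,
\[
Z_v\,\snov^{\tilde u}=\Bigl(\sum_{\vec p\in P} v(\vec p,1)\,u_{\vec p}+v(0,1)\,(-1-|u|)\Bigr)\snov^{\tilde u}.
\]
Since $u\in K$ we have $\sum_{\vec p}u_{\vec p}\vec p=0$, so the bracketed coefficient collapses to $v\bigl(\sum u_{\vec p}(\vec p,1)-(1+|u|)(0,1)\bigr)=v(0,-1)=v(\hat\beta)$, exactly as required (with the standard reading $Z_v\phi=v(\hat\beta)\phi$).

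For the box equations \eqref{eq:GKZ_square} with $k\in K$, set $\tilde k=(k,-|k|)$, the image of $k$ in the relations lattice $L\subset\Z^{P\cup\{0\}}$. A monomial $\snov^{\tilde u}$ contributes $[\tilde u\downarrow\tilde k_+]\,\snov^{\tilde u-\tilde k_+}$ to $\partial^{\tilde k_+}\Phi$, where $[\tilde u\downarrow\tilde k_+]$ denotes the product of falling factorials $\prod_{\vec q:\tilde k_{\vec q}>0}\tilde u_{\vec q}(\tilde u_{\vec q}-1)\cdots(\tilde u_{\vec q}-\tilde k_{\vec q}+1)$, and analogously for $\partial^{\tilde k_-}$. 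Matching monomials via the substitution $u=u'+k$ (equivalently $\tilde u-\tilde k_+=\tilde u'-\tilde k_-$), the box equation reduces to the pointwise identity
\[
c(u)\,[\tilde u\downarrow\tilde k_+]=c(u')\,[\tilde u'\downarrow\tilde k_-].
\]
All the $\vec p\in P$ factors reduce to the multinomial identity $\mathsf{comb}(u)\prod_{k_{\vec p}>0}u_{\vec p}!/u'_{\vec p}!=\mathsf{comb}(u')\prod_{k_{\vec p}<0}u'_{\vec p}!/u_{\vec p}!$, which is straightforward. The only nontrivial piece is the $\snov_0$ contribution: when $|k|>0$ the factor $\prod_{j=0}^{|k|-1}(-1-|u'|-j)$ produces exactly $(-1)^{|k|}|u|!/|u'|!$, which together with $(-1)^{|u|}=(-1)^{|u'|+|k|}$ makes the signs and the ratio $|u|!/|u'|!$ balance; the cases $|k|=0$ and $|k|<0$ are symmetric. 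One must also note that if $u\in K_{\ge 0}$ but $u-k\notin K_{\ge 0}$ then the associated falling factorial contains a zero, so such terms drop out from both sides, confirming that no terms are missed when reindexing between $K_{\ge 0}$ and its translate.

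The main obstacle is nothing deep but rather the careful bookkeeping of the $\snov_0$ exponent $-1-|u|$, since this is where negative arguments appear in the falling factorials and hence where all the signs and the `collapse' of $|u|!/|u'|!$ originate; once this piece is handled cleanly, the remainder is a routine multinomial manipulation.
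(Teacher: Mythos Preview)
Your argument is correct and is exactly the direct computation the paper has in mind: the lemma is stated in the paper without proof, with the remark that these lemmas ``follow from direct computations.'' Your unpacking of $\snov_0^{-1}\tau(\nov)$ as $\sum_u c(u)\snov^{\tilde u}$ with $\tilde u=(u,-1-|u|)$, the termwise verification of the $Z_v$-eigenvalue equation (correctly reading \eqref{eq:GKZ_Z} as $Z_v\phi=v(\hat\beta)\phi$), and the falling-factorial bookkeeping for $\square_k$ are all sound. In particular, your handling of the boundary case---that when $u\in K_{\ge 0}$ but $u-k\notin K_{\ge 0}$ the relevant falling factorial vanishes because some $u_{\vec p}<k_{\vec p}$ with $k_{\vec p}>0$---is the right way to ensure the reindexing is lossless.
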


\begin{lem}\label{lem:taup}
    Let  
    \begin{align*}
        \tilde\tau_{\vec{p}}(\nov) & = 
        \sum_{u\in K_{\geq 0}} \mathsf{comb}(u) \cdot  (-H_{u_{\vec{p}}})\cdot \nov^u, \\
        \gamma_{\vec{p}}(\nov)=&\sum_{u\in K_{\vec{p}} \setminus K_{\ge 0}} (-1)^{u_{\vec{p}}+1}\cdot \mathsf{comb}_{\vec{p}}(u) \cdot \nov^u.
    \end{align*}
    Then $\snov_0^{-1} (\tau(\nov)\cdot\log(\snov_{\vec{p}}) + \tilde\tau_{\vec{p}}(\nov)+\gamma_{\vec{p}}(\nov))$ is a solution to \eqref{eq:GKZ_square}, and $\tilde\tau_{\vec{p}}+\gamma_{\vec{p}}$ has vanishing constant term.
\end{lem}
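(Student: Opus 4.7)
The plan is to obtain the logarithmic solution by the Frobenius method applied to the GKZ $\mathcal{A}$-hypergeometric system, realizing it as a mixed second partial derivative of the two-parameter $\Gamma$-series evaluated at the resonant parameter value $c^* = (c_0, c_{\vec{p}})_{\vec{p} \in P} = (-1, 0, \ldots, 0) \in \mathbb{C}^{\mathcal{A}}$.

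More concretely, I introduce the formal series
$$\Phi(\snov, c) := \sum_{u \in K} \frac{\snov_0^{c_0 - |u|}\prod_{\vec{q} \in P} \snov_{\vec{q}}^{c_{\vec{q}} + u_{\vec{q}}}}{\Gamma(c_0 - |u| + 1)\prod_{\vec{q} \in P} \Gamma(c_{\vec{q}} + u_{\vec{q}} + 1)},$$
and invoke the standard fact that $\square_k \Phi(\snov, c) = 0$ for every $k \in K$ and every value of the continuous parameter $c$: this is because each $\partial^{k_\pm}$ shifts the summation index $u$ by an element of $K$ and the resulting Pochhammer factors cancel exactly against the shifts in the Gamma denominators. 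Consequently every partial derivative $\partial_{c_0}^a \partial_{c_{\vec{p}}}^b \Phi(\snov, c^*)$ again formally solves \eqref{eq:GKZ_square}. In particular, a straightforward computation recovers the previous lemma in the form $\partial_{c_0} \Phi(\snov, c^*) = \snov_0^{-1}\tau(\nov)$.

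The key step is to compute $\partial_{c_{\vec{p}}} \partial_{c_0}\Phi(\snov, c^*)$ term by term using the identity $\tfrac{d}{dz}\tfrac{1}{\Gamma(z)}\big|_{z=-m} = (-1)^m m!$ (for $m \ge 0$) together with $\psi(n+1) = -\gamma + H_n$. At $c = c^*$ every summand vanishes, so only those $u$ for which the two derivatives land on vanishing $\Gamma$-factors can contribute, and these split into two classes. For $u \in K_{\ge 0}$ only the single factor $1/\Gamma(-|u|)$ vanishes and the second derivative produces a digamma factor $-\psi(u_{\vec{p}}+1) = \gamma - H_{u_{\vec{p}}}$; for $u \in K_{\vec{p}} \setminus K_{\ge 0}$ both $1/\Gamma(-|u|)$ and $1/\Gamma(u_{\vec{p}}+1)$ vanish simultaneously and each derivative removes one zero (note that $|u| \ge 0$ on $K_{\vec{p}}$ by \cite[Lemma 9.2]{BeukersVlasenkoIII}, so we really are in the pole regime of $1/\Gamma(-|u|)$). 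After the substitution $\snov_0^{-1-|u|}\snov^u = (-1)^{|u|}\snov_0^{-1}\nov^u$ forced by $\nov_{\vec{p}} = -\snov_{\vec{p}}/\snov_0$, the $K_{\ge 0}$ contributions collect to $\snov_0^{-1}\bigl(\tau(\nov) \log\snov_{\vec{p}} + \gamma\,\tau(\nov) + \tilde\tau_{\vec{p}}(\nov)\bigr)$, while reorganizing $(-u_{\vec{p}}-1)! \cdot |u|! / \prod_{\vec{q} \neq \vec{p}} u_{\vec{q}}!$ with its accumulated signs recognizes the $K_{\vec{p}} \setminus K_{\ge 0}$ contributions as $\snov_0^{-1}\gamma_{\vec{p}}(\nov)$. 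Subtracting $\gamma \cdot \partial_{c_0}\Phi(\snov, c^*) = \gamma \snov_0^{-1}\tau(\nov)$, itself a solution by the previous lemma, kills the spurious $\gamma \tau$ term and yields the stated expression as a solution of \eqref{eq:GKZ_square}.

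The vanishing of the constant term of $\tilde\tau_{\vec{p}}+\gamma_{\vec{p}}$ is then immediate: the $u=0$ summand of $\tilde\tau_{\vec{p}}$ is proportional to $H_0 = 0$, and $\gamma_{\vec{p}}$ excludes $u = 0$ by definition of $K_{\vec{p}} \setminus K_{\ge 0}$. The main obstacle is the combinatorial bookkeeping in the second paragraph: carefully tracking the signs $(-1)^{|u|}$ and $(-1)^{u_{\vec{p}}+1}$ produced by the substitution $\nov = -\snov/\snov_0$ and by the derivative formula at the poles of $1/\Gamma$, and verifying that the surviving factorial product reorganizes precisely into $\mathsf{comb}_{\vec{p}}(u)$.
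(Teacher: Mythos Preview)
Your argument is correct, but it takes a genuinely different route from the paper. The paper does not construct the solution via the Frobenius/$\Gamma$-series formalism; instead it states the candidate and verifies directly that $\square_k$ annihilates it, using the elementary identity
\[
\partial_x^m(x^a \log x) - \partial_x^m(x^a)\log x = \begin{cases}
    \tfrac{a!}{(a-m)!}(H_a-H_{a-m})\,x^{a-m} & a\ge m,\\
    (-1)^{m-a+1}\,a!\,(m-a-1)!\,x^{a-m} & 0\le a<m,\\
    (-1)^m\tfrac{(m-a-1)!}{(-a-1)!}(H_{m-a-1}-H_{-a-1})\,x^{a-m} & a<0,
\end{cases}
\]
applied to the variable $\snov_{\vec p}$. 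In that approach the three cases correspond exactly to the three regimes you see: the $K_{\ge 0}$ terms produce the harmonic-number contribution $\tilde\tau_{\vec p}$, the middle case produces $\gamma_{\vec p}$, and the third case handles the action of $\partial^{k_-}$ on the $\gamma_{\vec p}$ terms.

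Your approach has the advantage of explaining \emph{where} the candidate comes from (as $\partial_{c_{\vec p}}\partial_{c_0}\Phi$ minus the Euler-constant multiple of $\partial_{c_0}\Phi$) rather than pulling it out of a hat, and it packages the case analysis into the single observation that only $u\in K_{\vec p}$ survive because the other $1/\Gamma(u_{\vec q}+1)$ factors are untouched by the two derivatives. The paper's approach, by contrast, is entirely elementary (no $\Gamma$-function identities beyond factorials) and makes the verification mechanical once the derivative lemma is in hand. Both are standard in the GKZ literature; yours is closer in spirit to \cite{Adolphson2014}, which the paper cites for comparison.
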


\begin{lem}\label{lem:tau0}
    Let
    \begin{align*}
        \tilde\tau_0(\nov) &= \sum_{u\in K_{\geq 0}} \mathsf{comb}(u) \cdot  (-H_{|u|}) \cdot \nov^u.
    \end{align*}
    Then $\snov_0^{-1} (\tau(\nov)\cdot\log(\snov_0) + \tilde\tau_{0}(\nov))$ is a solution to \eqref{eq:GKZ_square}, and $\tilde\tau_0$ has vanishing constant term.
\end{lem}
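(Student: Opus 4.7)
The vanishing of the constant term of $\tilde\tau_0$ is immediate from $H_0 = 0$, so the substance of the lemma lies in showing that $\snov_0^{-1}(\tau\log\snov_0 + \tilde\tau_0)$ is annihilated by every $\square_k$. My plan is to realize this function as $-\partial_\epsilon\phi_\epsilon|_{\epsilon=0}$ for a one-parameter family $\phi_\epsilon$ of solutions to the $\square_k$ system. Since each $\square_k$ has constant coefficients it commutes with $\partial_\epsilon$, so the conclusion will follow from $\square_k\phi_\epsilon = 0$ for all $\epsilon$, without any case analysis on the sign of $|k|$.

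Concretely, set
$$c_u(\epsilon) := \frac{(-1)^{|u|}\,\Gamma(|u|+1+\epsilon)}{\Gamma(1+\epsilon)\prod_{\vec p\in P}\Gamma(u_{\vec p}+1)}, \qquad \phi_\epsilon(\snov) := \sum_{u \in K_{\ge 0}} c_u(\epsilon)\, \snov^{(u,\,-|u|-1-\epsilon)}.$$
At $\epsilon = 0$ we have $c_u(0) = (-1)^{|u|}\mathsf{comb}(u)$, and since $(-1)^{|u|}\snov^{(u,-|u|-1)} = \snov_0^{-1}\nov^u$, the specialization is $\phi_0 = \snov_0^{-1}\tau$. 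The main step is to verify $\square_k\phi_\epsilon = 0$ for every $\epsilon$, by the same monomial-matching argument used for the first lemma of this appendix. The only $\epsilon$-dependence appears in the $\Gamma(|u|+1+\epsilon)$ factor of $c_u(\epsilon)$ and in the $\snov_0$-falling factorial $(-|u-k|-1-\epsilon)^{\underline{|k|}} = (-1)^{|k|}\Gamma(|u|+1+\epsilon)/\Gamma(|u-k|+1+\epsilon)$ arising from $\partial^{k_-}$ (taking $|k|>0$; the other cases are analogous). These contributions cancel exactly, so the required identity reduces to its $\epsilon=0$ avatar, which is precisely the combinatorial content of the first lemma.

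To finish, differentiate at $\epsilon = 0$. From $c_u'(0)/c_u(0) = \psi(|u|+1) - \psi(1) = H_{|u|}$ and $\partial_\epsilon\snov_0^{-\epsilon}|_{\epsilon = 0} = -\log\snov_0$, a direct computation yields
$$-\partial_\epsilon\phi_\epsilon|_{\epsilon=0} \;=\; \log\snov_0 \cdot \snov_0^{-1}\tau - \sum_{u \in K_{\ge 0}} c_u(0)\,H_{|u|}\,\snov^{(u,-|u|-1)} \;=\; \snov_0^{-1}(\tau\log\snov_0 + \tilde\tau_0),$$
completing the proof. The principal obstacle is the verification that $\square_k\phi_\epsilon = 0$ for every $\epsilon$, but this is essentially a re-packaging of the identity behind the first lemma, with the $\epsilon$-dependence organized cleanly into Gamma function ratios that cancel.
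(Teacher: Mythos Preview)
Your argument is correct. The Frobenius-type trick of deforming the exponent of $\snov_0$ by $\epsilon$, checking that $\phi_\epsilon$ still satisfies all the box equations (the Gamma ratios cancel exactly as you say, for all signs of $|k|$), and then differentiating at $\epsilon=0$ is a clean and standard way to produce the logarithmic solution.

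This is a genuinely different route from the paper's. The paper proves Lemmas~\ref{lem:taup} and~\ref{lem:tau0} by applying $\square_k$ directly to the expression involving $\log \snov_0$ and invoking the elementary formula
\[
\partial_x^m(x^a\log x)-\partial_x^m(x^a)\log x \;=\;
\begin{cases}
\tfrac{a!}{(a-m)!}\,(H_a-H_{a-m})\,x^{a-m} & a\ge m,\\[2pt]
(-1)^{m-a+1}\,a!\,(m-a-1)!\,x^{a-m} & 0\le a<m,\\[2pt]
(-1)^m\tfrac{(m-a-1)!}{(-a-1)!}\,(H_{m-a-1}-H_{-a-1})\,x^{a-m} & a<0,
\end{cases}
\]
which for Lemma~\ref{lem:tau0} lands in the third case (since the $\snov_0$-exponent is $-|u|-1<0$). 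One then verifies the required cancellation term by term. Your approach packages the same computation more conceptually: the three-case formula above is essentially the $\epsilon$-derivative at $0$ of the falling-factorial identity $(a+\epsilon)^{\underline m}=\Gamma(a+1+\epsilon)/\Gamma(a-m+1+\epsilon)$, so you have absorbed the case analysis into a single Gamma identity and pushed the logarithm into $\partial_\epsilon\snov_0^{-\epsilon}$. The paper's advantage is that the same differentiation lemma serves Lemma~\ref{lem:taup} as well, where the relevant exponent is the nonnegative integer $u_{\vec p}$ and the second case generates the extra $\gamma_{\vec p}$ terms; your deformation would need to be placed on the $\snov_{\vec p}$-exponent instead, and one would have to handle the fact that $\Gamma(u_{\vec p}+1+\epsilon)^{-1}$ can acquire a simple zero at $\epsilon=0$ for $u\in K_{\vec p}\setminus K_{\ge 0}$, which is exactly where $\gamma_{\vec p}$ comes from.
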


We observe that $\tau_{\vec{p}} = \tilde \tau_{\vec{p}} - \tilde\tau_0$.

The proof of Lemmas \ref{lem:taup} and \ref{lem:tau0} makes use of the following elementary Lemma:

\begin{lem}
    We have
    \[\partial_x^u(x^a \log (x)) - \partial^u_x(x^a)\log(x)= \begin{cases}
        \frac{a!}{(a-u)!}\cdot (H_a-H_{a-u})\cdot x^{a-u} & \text{ if $a\geq u$}\\
        (-1)^{u-a+1} \cdot a! \cdot (u-a-1)! \cdot x^{a-u} & \text{ if $0 \le a<u$}\\
        (-1)^u\cdot\frac{(u-a-1)!}{(-a-1)!} \cdot (H_{u-a-1} - H_{-a-1}) \cdot x^{a-u}& \text{ if $a<0$}
    \end{cases}\]
\end{lem}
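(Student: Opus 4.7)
The plan is to unify all three cases via a single parametric differentiation. I would start from
\[\partial_x^u(x^{a+t}) = (a+t)_u\, x^{a+t-u},\]
where $(a+t)_u := (a+t)(a+t-1)\cdots(a+t-u+1)$ denotes the falling factorial, and differentiate with respect to $t$ at $t=0$. Since $\partial_t(x^{a+t})\big|_{t=0} = x^a \log x$, interchanging the two partial derivatives gives
\[\partial_x^u(x^a \log x) - \partial_x^u(x^a)\, \log x \;=\; C(a,u)\cdot x^{a-u}, \qquad C(a,u) := \frac{d}{dt}\bigg|_{t=0}(a+t)_u.\]
This reduces the lemma to evaluating $C(a,u)$ in each of the three ranges of $a$.

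For Case 1 ($a \geq u$), all factors $a-j$ with $0 \leq j \leq u-1$ are non-zero, so the logarithmic derivative of the product is well-defined and gives $C(a,u) = (a)_u \sum_{j=0}^{u-1} 1/(a-j)$; the sum equals $H_a - H_{a-u}$ by definition, and $(a)_u = a!/(a-u)!$, yielding the stated expression. For Case 2 ($0 \leq a < u$), exactly one factor of $(a+t)_u$ vanishes at $t=0$---namely the factor $(a+t-a)=t$---so $C(a,u)$ equals the product of the remaining factors; splitting this into the $j < a$ part (which equals $a!$) and the $j > a$ part (which equals $(-1)^{u-a-1}(u-a-1)!$) recovers the claim. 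For Case 3 ($a<0$), no factor vanishes, so the Case 1 logarithmic derivative argument applies again; writing $m = -a > 0$, one checks that $(a)_u = (-1)^u (u-a-1)!/(-a-1)!$ and $\sum_{j=0}^{u-1} 1/(a-j) = -(H_{u-a-1} - H_{-a-1})$, and combining these produces the stated formula after tracking the overall sign.

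I expect the only real obstacle to be careful sign bookkeeping in Case 3, where the $(-1)^u$ arising from rewriting the falling factorial must be combined correctly with the sign arising from inverting the indices of the harmonic sum; beyond this the computation is entirely routine. It is worth noting that the case distinction in the statement merely reflects whether the product $(a+t)_u$ has a vanishing factor at $t=0$, and how one most efficiently evaluates the derivative in each regime---a single unified formula $C(a,u) = \frac{d}{dt}\big|_{t=0}(a+t)_u$ underlies all three.
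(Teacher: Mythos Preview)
Your approach is correct and the paper itself gives no proof of this lemma---it is merely labelled ``elementary'' and stated without argument---so there is nothing to compare against beyond the implicit suggestion of a direct induction on $u$. Your parametric-differentiation method is a clean way to handle all three cases at once and is exactly the kind of argument one would expect.

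One concrete point: when you actually carry out the sign bookkeeping in Case~3 you will find that your computation gives
\[
C(a,u) \;=\; (a)_u \sum_{j=0}^{u-1}\frac{1}{a-j}
\;=\; (-1)^u\,\frac{(u-a-1)!}{(-a-1)!}\cdot\bigl(-(H_{u-a-1}-H_{-a-1})\bigr)
\;=\; (-1)^{u+1}\,\frac{(u-a-1)!}{(-a-1)!}\,(H_{u-a-1}-H_{-a-1}),
\]
which differs from the displayed formula by a sign. A quick check with $a=-1$, $u=1$ confirms this: $\partial_x(x^{-1}\log x)-\partial_x(x^{-1})\log x = x^{-2}$, whereas the displayed Case~3 expression evaluates to $-x^{-2}$. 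So the ``obstacle'' you anticipated is real, but it is a typo in the statement rather than a flaw in your argument; your method produces the correct coefficient in every case.
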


\begin{cor}
    For any $u \in K$, let 
    \begin{align*}
        \tau_u(\nov) &= -|u| \cdot \tilde\tau_0(\nov) + \sum_{\vec{p} \in P} u_{\vec{p}} \cdot \tilde\tau_{\vec{p}}.
    \end{align*}
    Then $\snov_0^{-1}(\tau(\nov)\cdot \log(\nov^u) + \tau_u(\nov))$ is a solution to \eqref{eq:GKZ_square} and \eqref{eq:GKZ_Z}, and $\tau_u$ has vanishing constant term.
\end{cor}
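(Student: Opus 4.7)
The plan is to build the desired solution of the GKZ system as a linear combination of the solutions to \eqref{eq:GKZ_square} produced by Lemmas \ref{lem:taup} and \ref{lem:tau0}. Using $\nov_{\vec p} = -\snov_{\vec p}/\snov_0$ we have
\[
\log\nov^u = \sum_{\vec p \in P} u_{\vec p}\log\snov_{\vec p} \;-\; |u|\log\snov_0 \;+\; |u|\log(-1).
\]
Let $L_{\vec p}$ and $L_0$ denote the $\square_k$-solutions from Lemmas \ref{lem:taup} and \ref{lem:tau0} respectively. The linear combination $S := \sum_{\vec p\in P} u_{\vec p}\cdot L_{\vec p} - |u|\cdot L_0$ is again a solution to \eqref{eq:GKZ_square} by linearity, and expanding and absorbing the resulting scalar multiple of $\snov_0^{-1}\tau$ coming from $|u|\log(-1)$ (itself a solution to \eqref{eq:GKZ_square} by the first lemma) shows that
\[
\snov_0^{-1}\Bigl(\tau\log\nov^u + \tau_u + \sum_{\vec p\in P}u_{\vec p}\gamma_{\vec p}\Bigr)
\]
satisfies \eqref{eq:GKZ_square}.

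The main obstacle is then the residual term $\snov_0^{-1}\sum_{\vec p} u_{\vec p}\gamma_{\vec p}$: to conclude, I need to show that it is by itself annihilated by every $\square_k$ when $u\in K$, so that it can be subtracted off. I would prove this by the direct computation underpinning the proof of Lemma \ref{lem:taup} itself, applying $\square_k$ term-by-term to each $\snov_0^{-1}\gamma_{\vec p}$ using the factorial-harmonic identity stated in the elementary Lemma, and then observing pairwise cancellations among the various $\vec p$ contributions that are forced exactly by the defining relation $\sum_{\vec p} u_{\vec p}\vec p = 0$ of an element of $K$. Granting this, subtracting off $\snov_0^{-1}\sum_{\vec p}u_{\vec p}\gamma_{\vec p}$ yields \eqref{eq:GKZ_square} for $\snov_0^{-1}(\tau\log\nov^u + \tau_u)$.

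The verification of \eqref{eq:GKZ_Z} is then direct. For any monomial $\nov^a$ with $a \in K$,
\[
Z_v\nov^a \;=\; \Bigl[\sum_{\vec p\in P} v(\vec p,1)\,a_{\vec p} \;-\; |a|\,v(0,1)\Bigr]\nov^a \;=\; v_M\Bigl(\sum_{\vec p} a_{\vec p}\vec p\Bigr)\nov^a \;=\; 0,
\]
where $v_M$ denotes the $M$-component of $v$; the same computation gives $Z_v\log\nov^u = 0$ for $u\in K$. Since $\tau$ and $\tau_u$ are power series in $\{\nov^a\}_{a\in K_{\ge 0}}$, it follows that $Z_v$ annihilates $\tau\log\nov^u + \tau_u$ as a function of $\snov$, and the action of $Z_v$ on the factor $\snov_0^{-1}$ contributes precisely the eigenvalue $v(\hat\beta) = -v(0,1)$. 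Finally, the vanishing of the constant term of $\tau_u$ is immediate: the $u = 0$ coefficient of each of $\tilde\tau_0$ and $\tilde\tau_{\vec p}$ carries a factor $-H_{|u|}$ or $-H_{u_{\vec p}}$ equal to $H_0 = 0$.
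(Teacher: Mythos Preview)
Your strategy---take the linear combination of the solutions from Lemmas \ref{lem:taup} and \ref{lem:tau0}, then verify \eqref{eq:GKZ_Z} directly---is exactly what the paper intends: the result is stated as a Corollary with no proof, so this linear-combination-plus-homogeneity-check is the whole argument. Your verification of the $Z_v$ equation and of the vanishing constant term is correct.

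You have, however, correctly spotted a discrepancy: the linear combination $\sum_{\vec{p}} u_{\vec{p}} L_{\vec{p}} - |u|L_0$ produces the holomorphic part $-|u|\tilde\tau_0 + \sum_{\vec{p}} u_{\vec{p}}(\tilde\tau_{\vec{p}} + \gamma_{\vec{p}})$, whereas the displayed formula for $\tau_u$ omits the $\gamma_{\vec{p}}$ contributions. This omission appears to be a slip in the statement rather than an extra claim to be proved: compare the definition of $\phi_{\vec{p}}$ in the introduction, which does include $\gamma_{\vec{p}}$, and the next Corollary identifying the mirror map with $\Phi$. With the $\gamma_{\vec{p}}$ terms reinstated in $\tau_u$, your linear-combination argument goes through verbatim and matches the paper's intended derivation.

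Your attempt to instead prove the statement as literally written---by showing that $\snov_0^{-1}\sum_{\vec{p}} u_{\vec{p}}\gamma_{\vec{p}}$ is separately annihilated by every $\square_k$---is not justified by what you wrote. The assertion that ``pairwise cancellations \ldots\ are forced exactly by the defining relation $\sum_{\vec{p}} u_{\vec{p}}\vec{p} = 0$'' is not a proof: that relation lives in $M$, while the terms $\square_k(\snov_0^{-1}\gamma_{\vec{p}})$ are power series supported on the sets $K_{\vec{p}}\setminus K_{\ge 0}$, and you give no mechanism by which the former forces the latter to cancel. The paper neither makes nor needs this claim.
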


\begin{cor}
    The mirror map is given by $\Phi|_{\C[[K_{\ge 0}]]}$, where $\Phi$ is as in \eqref{eq:mm_formula}.
\end{cor}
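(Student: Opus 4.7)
The plan is to apply the Cox--Katz recipe recalled at the opening of this appendix, using the explicit solutions of the $\mathcal{A}$-GKZ system assembled in Lemma \ref{lem:taup}, Lemma \ref{lem:tau0}, and the preceding Corollary. The fundamental solution is $\tau$ from the first Lemma. For each $u \in K$, the preceding Corollary gives a log-solution with holomorphic correction $\tau_u = -|u|\tilde{\tau}_0 + \sum_{\vec{p}} u_{\vec{p}}\tilde{\tau}_{\vec{p}}$ of vanishing constant term. The Cox--Katz recipe then defines the mirror map by $\nov^u \mapsto \nov^u\exp(\tau_u/\tau)$ for $u \in K_{\ge 0}$.

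Using the identity $\tau_{\vec{p}} = \tilde{\tau}_{\vec{p}} - \tilde{\tau}_0$ (the observation stated just after Lemma \ref{lem:tau0}) together with $|u| = \sum_{\vec{p}} u_{\vec{p}}$, I would rewrite $\tau_u = \sum_{\vec{p}} u_{\vec{p}}\tau_{\vec{p}}$ and express the mirror map as $\nov^u \mapsto \nov^u\prod_{\vec{p}}\exp(\tau_{\vec{p}}/\tau)^{u_{\vec{p}}}$. Matching this with $\Phi(\nov^u) = \nov^u\prod_{\vec{p}}\phi_{\vec{p}}^{u_{\vec{p}}}$, where $\phi_{\vec p} = \exp((\tau_{\vec p}+\gamma_{\vec p})/\tau)$, reduces the corollary to accounting for the factor $\exp(\sum_{\vec p}u_{\vec p}\gamma_{\vec p}/\tau)$ on the restriction $\C[[K_{\ge 0}]] \to \C[[K_+]]$.

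The main step I expect to be the most delicate is this final reconciliation with the $\gamma_{\vec{p}}$ correction terms appearing in $\phi_{\vec p}$. My plan is to observe that the alternative $\tau_u' := \tau_u + \sum_{\vec p}u_{\vec p}\gamma_{\vec p}$ is itself a valid Cox--Katz log-solution, obtained by combining the $\log(\snov_{\vec{p}})$-solutions of Lemma \ref{lem:taup} directly (which already incorporate $\gamma_{\vec p}$) and subtracting $|u|$ copies of Lemma \ref{lem:tau0}. The Cox--Katz mirror map built from $\tau_u'$ is then manifestly $\Phi|_{\C[[K_{\ge 0}]]}$, and it agrees with the one built from $\tau_u$ since both valid choices of log-solution with vanishing constant term yield the same mirror map under the Cox--Katz prescription. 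Alternatively, and perhaps more cleanly, one can invoke the uniqueness of the mirror map preserving flat coordinates, as used in the main text to prove Theorem \ref{thm:int_mm}: since both the Cox--Katz mirror map and $\Phi|_{\C[[K_{\ge 0}]]}$ preserve flat coordinates, they must coincide.
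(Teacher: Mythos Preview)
Your approach is the paper's intended one: the Corollary is stated without proof, as immediate from the preceding Corollary together with the Cox--Katz recipe. You correctly identify a wrinkle---the preceding Corollary's $\tau_u$ omits the $\gamma_{\vec p}$ terms, whereas $\phi_{\vec p}$ includes them---and your proposal to use $\tau_u' = \sum_{\vec p} u_{\vec p}(\tau_{\vec p} + \gamma_{\vec p})$, assembled directly from Lemmas~\ref{lem:taup} and~\ref{lem:tau0}, is the right fix: with $\tau_u'$ in place of $\tau_u$, the final Corollary follows at once. (This appears to be an omission in the paper's statement of the preceding Corollary.)

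Two caveats about your justifications, however. First, your claim that ``both valid choices of log-solution with vanishing constant term yield the same mirror map under the Cox--Katz prescription'' cannot be right: if both $\tau_u$ and $\tau_u'$ produced log-solutions, their difference $\snov_0^{-1}\sum_{\vec p} u_{\vec p}\gamma_{\vec p}$ would be a holomorphic GKZ solution with vanishing constant term, hence zero by uniqueness of the holomorphic solution---but it is nonzero whenever some $K_{\vec p} \setminus K_{\ge 0}$ is nonempty. The point is rather that only $\tau_u'$ actually solves~\eqref{eq:GKZ_square}; the $\gamma_{\vec p}$ in Lemma~\ref{lem:taup} are precisely what is needed to handle the operators $\square_k$ with $k_{\vec p} < 0$. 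Second, your alternative of invoking ``uniqueness of the mirror map preserving flat coordinates, as used in the main text'' is circular: the proof of Theorem~\ref{thm:int_mm} cites this very Appendix to identify that unique map with $\Phi$ in the first place.
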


\renewbibmacro{in:}{}
\def\bibrangedash{ -- }
\printbibliography

\end{document}